\documentclass[11pt,a4paper,reqno]{amsart}

\usepackage{amsmath,amssymb,mathtools,esint}

\usepackage{enumitem}
\usepackage{hyperref}

\usepackage[left=1in,right=1in,top=1in,bottom=1in]{geometry}
\hypersetup{
  colorlinks=true,
  linkcolor=blue,
  citecolor=blue,
  urlcolor=blue,
}

\numberwithin{equation}{section}

\newtheorem{theorem}{Theorem}[section]
\newtheorem{lemma}[theorem]{Lemma}
\newtheorem{proposition}[theorem]{Proposition}
\newtheorem{corollary}[theorem]{Corollary}
\theoremstyle{definition}
\newtheorem{definition}[theorem]{Definition}
\theoremstyle{remark}
\newtheorem{remark}[theorem]{Remark}

\newcommand{\R}{\mathbb{R}}
\newcommand{\Sph}{\mathbb{S}}
\newcommand{\Tr}{\operatorname{Tr}}

\newcommand{\dashint}{\fint}

\title[Boundary regularity for Oseen--Frank minimizers]
{Boundary Regularity for Oseen--Frank Minimizers with Arbitrary Positive
Splay, Twist, and Bend Constants}

\author{Zhiyuan Dai}
\address{School of Mathematical Sciences, Zhejiang University, Hangzhou 310058, China}
\email{12635038@zju.edu.cn}

\author{Haotong Fu}
\address{School of Mathematical Sciences, Peking University, Beijing 100871, China}
\email{2301110012@pku.edu.cn}

\author{Huaijie Wang}
\address{School of Mathematical Sciences, Peking University, Beijing 100871, China}
\email{huaijie\_wang@163.com}

\author{Wei Wang}
\address{School of Mathematical Sciences, Peking University, Beijing 100871, China}
\email{wwmath166@outlook.com}
\email{2201110024@stu.pku.edu.cn}

\date{}

\begin{document}
\begin{abstract}
Let \(\Omega\subset\mathbb R^3\) be a smooth bounded domain and let
\(g:\partial\Omega\to\mathbb S^2\) be smooth. We prove that any global
minimizer of the Oseen--Frank energy subject to the strong anchoring condition
\(n=g\) is smooth in a full neighborhood of \(\partial\Omega\). The result
holds for arbitrary positive splay, twist, and bend constants, without any
small-anisotropy assumption. It resolves the boundary-regularity part of Lin and Liu's Problem~(c) for the pure Oseen--Frank problem and for its
prescribed smooth magnetic-field perturbation.
\end{abstract}

\subjclass[2020]{35B65, 35J50, 49N60, 76A15}
\keywords{Oseen--Frank energy, minimizing maps, boundary regularity,
liquid crystals, blow-up, boundary stress measure}

\maketitle

\section{Introduction}

\subsection{Variational setting}
\label{subsec:of-variational-problem}

Let \(\Omega\subset\mathbb R^3\) be a smooth bounded domain occupied by a
nematic liquid crystal. The classical theory originates with
Oseen~\cite{Oseen1933} and Frank~\cite{Frank1958}; standard expositions
include de Gennes and Prost~\cite{deGennesProst1993} and
Virga~\cite{Virga1994}. Its orientation is described by a director field
\[
n:\Omega\to\mathbb S^2,
\quad | n|=1\quad\text{a.e. in }\Omega.
\]
The elastic energy is
\begin{align}
2E_{\mathrm{OF}}(n;\Omega)
=\int_\Omega\Bigl\{&
 k_1(\operatorname{div}n)^2
 +k_2(n\cdot\operatorname{curl}n)^2
 +k_3| n\times\operatorname{curl}n|^2
 \notag\\
&+(k_2+k_4)
 \bigl[
   \operatorname{tr}((\nabla n)^2)
   -(\operatorname{div}n)^2
 \bigr]
\Bigr\}\,\mathrm{d} x.
\end{align}
Here, \(k_1,k_2,k_3>0\) are the splay, twist, and bend elastic constants,
respectively, while \(k_4\) is the saddle-splay constant. The last integrand is
a null Lagrangian. Consequently, its integral is determined by the Dirichlet
trace and does not affect minimizers in a fixed strong-anchoring class.

Given a smooth boundary datum
\[
g:\partial\Omega\to\mathbb S^2,
\]
define
\[
\mathcal A_g
:=
\left\{
v\in H^1(\Omega;\mathbb S^2):
\operatorname{Tr}v=g
\right\}.
\]
The problem considered in this paper is
\[
E_{\mathrm{OF}}(n;\Omega)
=
\min_{v\in\mathcal A_g}E_{\mathrm{OF}}(v;\Omega).
\]
Thus \(n\) is a global minimizer subject to the strong anchoring condition
\(n=g\) on \(\partial\Omega\).

Set
\[
\alpha:=\min\{k_1,k_2,k_3\}.
\]
Modulo the null Lagrangian, the preceding problem is equivalent to minimizing
the coercive density
\[
\begin{aligned}
2W_\alpha(z,P)
={}&
\alpha| P|^2
+(k_1-\alpha)(\operatorname{tr}P)^2\\
&+(k_2-\alpha)(z\cdot\operatorname{curl}P)^2
 +(k_3-\alpha)| z\times\operatorname{curl}P|^2.
\end{aligned}
\]
In particular,
\begin{equation}
\lambda| P|^2
\leq W_\alpha(z,P)
\leq\Lambda| P|^2,
\label{eq:coercivity}
\end{equation}
where \(0<\lambda\leq\Lambda<+\infty\) depend only on the Frank constants.
The density \(W_\alpha\) is autonomous, real analytic in \(z\),
two-homogeneous in \(P\), and uniformly strictly convex in \(P\).

The direct method gives existence of minimizers, while the foundational work
of Hardt, Kinderlehrer, and Lin~\cite{HKL1986,HKL1988} established interior
partial regularity, higher integrability, and quantitative density bounds for
static liquid-crystal configurations. In particular, a minimizer is smooth
away from a relatively closed interior singular set of Hausdorff dimension
strictly less than one. For smooth boundary data, the existing partial
boundary theory yields a closed set
\(\Sigma_1\subset\partial\Omega\), with \(\mathcal H^1(\Sigma_1)=0\),
outside which the minimizer is regular; see Lin and
Wang~\cite[Theorem~1.1]{LinWang2014}. That theory does not show that
\(\Sigma_1\) is empty, which is the gap addressed here.

\subsection{Previous results and the boundary problem}

When \(k_1=k_2=k_3\), the Oseen--Frank energy, modulo its null Lagrangian,
is a constant multiple of the Dirichlet energy. The minimizers are then
energy-minimizing harmonic maps into \(\mathbb S^2\), for which Schoen and
Uhlenbeck~\cite{SU1982,SU1983} proved interior and boundary regularity
theorems; see also Simon~\cite{Simon1996}. Their boundary blow-up
argument uses strong compactness, small-energy decay, and monotonicity of the
normalized Dirichlet energy. Monotonicity makes a half-space tangent map
homogeneous, after which a rigidity argument excludes non-constant tangents
with constant boundary trace.

For unequal Frank constants, the energy remains quadratic and uniformly
elliptic but is spatially anisotropic. The radial inner-variation identity no
longer reduces to a non-negative square, so the harmonic-map monotonicity
argument is unavailable. Lin and Liu~\cite[p.~302]{LinLiu2001} isolated this obstruction and asked
whether the normalized energy is monotone, whether singularities are
isolated, and whether a minimizer with smooth strong-anchoring data is smooth
near the boundary. The last question is the
boundary-regularity part of their Problem~(c).

The following theorem answers that boundary question directly, without
assuming or proving a normalized-energy monotonicity formula.

\begin{theorem}[Boundary regularity]\label{thm:main}
Let \(\Omega\subset\mathbb R^3\) be a smooth bounded domain and let
\(g:\partial\Omega\to\mathbb S^2\) be smooth. For arbitrary
\[
k_1,k_2,k_3>0,
\]
any global minimizer \(n\in\mathcal A_g\) of the Oseen--Frank energy is
smooth in a full neighborhood of \(\partial\Omega\). Equivalently, there
exists \(\rho>0\) such that
\[
n\in C^\infty(\overline{\Omega_\rho};\mathbb S^2),
\quad
\Omega_\rho
:=
\left\{
x\in\Omega:
\operatorname{dist}(x,\partial\Omega)<\rho
\right\}.
\]
In particular,
\[
\operatorname{Sing}n\cap\partial\Omega=\varnothing.
\]
\end{theorem}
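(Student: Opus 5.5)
The plan is a blow-up argument at a boundary point. It uses three standard ingredients in place of monotonicity: an \(\varepsilon\)-regularity theorem up to the boundary, strong compactness of blow-ups, and a Liouville theorem for half-space minimizers with constant trace.

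\textbf{Step 1 (boundary \(\varepsilon\)-regularity).} For minimizers of \(\int W_\alpha(n,\nabla n)\) with smooth data there is \(\varepsilon_0>0\) such that
\[
r^{-1}\int_{B_r(x_0)\cap\Omega}|\nabla n|^2<\varepsilon_0,\qquad x_0\in\partial\Omega,
\]
implies that \(n\) is smooth on \(\overline{\Omega}\cap B_{r/2}(x_0)\). This is the small-energy decay underlying Lin--Wang's partial boundary theory. It is proved by a harmonic-approximation or comparison argument using \eqref{eq:coercivity}, Luckhaus-type extension lemmas and the smoothness of \(g\), followed by Schauder theory for the uniformly elliptic Euler--Lagrange system. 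Once Step 1 is in hand, it suffices to show that every \(x_0\in\partial\Omega\) has some small radius at which the normalized energy is below \(\varepsilon_0\). Compactness of \(\partial\Omega\), together with the fact that regularity is an open condition, then gives smoothness on a uniform collar \(\Omega_\rho\).

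\textbf{Step 2 (blow-up and compactness).} Suppose \(x_0\in\Sigma_1\). Then \(\liminf_{r\to0}r^{-1}\int_{B_r(x_0)\cap\Omega}|\nabla n|^2\geq\varepsilon_0\). The comparison with the extension of \(g\) and the boundary reverse-Hölder estimates of Hardt--Kinderlehrer--Lin give a uniform upper bound on these normalized energies. Flatten the boundary and rescale by setting \(n_r(y)=n(x_0+ry)\). The rescaled maps converge weakly in \(H^1_{\rm loc}\) to a map \(n_0\) on the half-space \(\R^3_+\) with constant trace \(g(x_0)\), and the density freezes to \(W_\alpha(z,P)\) with no \(x\)-dependence. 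Strong \(H^1_{\rm loc}\) convergence up to the flat boundary, and the fact that \(n_0\) is a minimizer, should follow from the Luckhaus interpolation lemma as in Schoen--Uhlenbeck; this needs only quadratic growth and convexity, not monotonicity. Strong convergence carries the energy lower bound to the limit, so \(n_0\) is a nonconstant minimizer, and \(r^{-1}\)-normalized energies of \(n_0\) remain bounded below at every scale about \(0\).

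\textbf{Step 3 (Liouville, the main obstacle).} We must show that a minimizer \(n_0\) on \(\R^3_+\) with constant trace \(e\) and bounded normalized energy at all scales is constant. Without monotonicity, \(n_0\) need not be homogeneous. I would instead iterate blow-ups and blow-downs, using the fact that the class of such half-space minimizers is compact. Because this class is compact, it suffices to exclude a nonconstant element with a uniform lower energy bound at all scales.

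The key tool I would try first is a boundary stress identity. Take the inner variation of the energy with respect to the vector field \(\eta(x)e_3\), which is normal to the boundary. This yields the identity
\[
\int_{\partial\R^3_+}\eta\,\Bigl(\partial_P W_\alpha(e,\nabla n_0)\colon(\partial_3n_0\otimes e_3)-W_\alpha(e,\nabla n_0)\Bigr)\,dS=\text{interior terms},
\]
which holds in the sense of a boundary stress measure. Since the tangential derivatives vanish on the boundary, the boundary integrand reduces to \(W_\alpha(e,\partial_3n_0\otimes e_3)\geq\lambda|\partial_3n_0|^2\geq0\). Letting \(\eta\) exhaust space, the balance between this nonnegative term and the interior terms should force the normal derivative to vanish on \(\partial\R^3_+\). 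Odd or even reflection combined with unique continuation would then push towards constancy. Failing that, I would combine the boundary measure with the unsigned monotonicity defect to derive decay of the normalized energy, contradicting the lower bound obtained in Step 2. This step is where I expect the real difficulty to lie.

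Once the Liouville statement is established, the contradiction shows \(\Sigma_1=\varnothing\). Step 1 and compactness then give the collar \(\Omega_\rho\), and higher regularity follows from Schauder bootstrapping.
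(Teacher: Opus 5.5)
Your Steps 1 and 2 match the paper's reduction (Proposition~\ref{prop:tangent}): a boundary singular point yields a non-constant class-A minimizer $U$ on $\R^3_+$ with constant trace $q$ and $cR\le E_{W_0}(U;B_R^+)\le CR$. One correction to Step 2: the upper density bound does not follow from a global comparison with an extension of $g$, nor from reverse H\"older. Reverse H\"older only gives $\int_{B_r}|\nabla n|^2\lesssim r^{3(1-2/p)}$, which is worse than $r$ unless $p\ge 3$. The paper instead iterates the half-ball filling estimate $e(r/2)\le C_0\sqrt{e(r)}$, and this bound is load-bearing below.

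The genuine gap is Step~3, which is the whole difficulty, and the mechanism you propose there fails. Letting $\eta$ exhaust space in the normal inner variation does not force $\partial_3 n_0=0$. What it gives is the flux identity $\mu(D_R)=-\int_{S_R^+}T_{i3}\,\tfrac{x_i}{R}\,\mathrm{d}S$. The linear upper bound controls this flux only at good radii, so you get $\mu(\R^2)\le CM<\infty$, not $\mu=0$. There is no reason for the boundary stress of the tangent map itself to vanish. Moreover, to use $\mu$ at all you need it to be a non-negative measure globally, including at boundary singular points such as the origin, where it may carry an atom. Your pointwise computation $W(e,\partial_3n_0\otimes e_3)\ge0$ is only available on $C^1$ patches. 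The paper gets global positivity from a one-sided variation: extend $u$ by $q$ to $\{x_3<0\}$ and use $x\mapsto x-s\zeta e_3$ with $\zeta\ge0$ and $s>0$. This pushes the plane into the constant region, is admissible only in that direction, and gives $J'(0+)\ge0$.

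The missing idea is how to exploit $\mu(\R^2)<\infty$. In dimension three the stress measure is scale invariant, $\mu_{u_R}(A)=\mu_u(RA)$. So for a blow-down $u_j(x)=u(R_jx)$ with $R_j\to\infty$, finite mass gives $\mu_{u_j}(K)\to0$ for every compact $K\subset\R^2\setminus\{0\}$. Stresses converge under strong $H^1_{\mathrm{loc}}$ convergence, and the blow-down limit $v$ keeps the two-sided linear bounds, so it is non-constant and has $\operatorname{spt}\mu_v\subset\{0\}$. Your instinct to iterate blow-downs was right, but this is the link you did not make. On a regular boundary patch away from $0$ one then has $v=q$ and $\partial_3 v=0$. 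The existence of such a patch itself needs an argument, e.g.\ the Vitali covering showing that $\mathcal H^2$-a.e.\ boundary point has vanishing normalized energy.

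Even at that point, ``reflection plus unique continuation'' is not adequate. Even or odd reflection does not preserve an anisotropic system such as the one coming from $W_\alpha$; for instance, $(\operatorname{tr}P)^2$ is not invariant under $x_3\mapsto -x_3$. Unique continuation for such nonlinear systems is also not available off the shelf. The paper instead extends $v$ by $q$ across the patch, which is a weak solution because the Cauchy data vanish. It then uses analyticity of $W_\alpha$ in $z$ and Morrey's theorem to get $v=q$ on an open set. Finally it continues analytically through the interior regular set, which is connected because $\dim_{\mathcal H}\operatorname{Sing}v<1$, and this contradicts $E(v;B_r^+)\ge\varepsilon r$. Your fallback of combining the measure with the monotonicity defect is not specified enough to assess.
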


No smallness or closeness assumption is imposed on the ratios of the Frank
constants.

Lin and Liu formulated Problem~(c) simultaneously for their Problems~1, 3,
and 4. Theorem~\ref{thm:main} settles the boundary question for Problem~1.
Their Problem~3 adds a prescribed smooth magnetic field, which contributes a
zeroth-order potential and therefore disappears under boundary blow-up. The
same observation also permits a chiral first-order perturbation.

\begin{corollary}[Chirality and smooth lower-order fields]

The conclusion of Theorem~\ref{thm:main} remains valid after adding to the
integrand the chiral term
\[
k_2\bigl\{
(n\cdot\operatorname{curl}n+q_0)^2
-(n\cdot\operatorname{curl}n)^2
\bigr\},
\quad q_0\in\mathbb R,
\]
and any potential
\[
V\in C^\infty(\overline\Omega\times\mathbb S^2).
\]
In particular, the conclusion applies to the prescribed-magnetic-field energy
in Problem~3 of Lin and Liu~\cite{LinLiu2001}.
\end{corollary}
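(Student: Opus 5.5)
The plan is to treat both additions as lower-order perturbations that disappear under boundary blow-up, and to check that every step in the proof of Theorem~\ref{thm:main} tolerates such perturbations. Expanding the chiral term gives
\[
k_2\bigl\{(n\cdot\operatorname{curl}n+q_0)^2-(n\cdot\operatorname{curl}n)^2\bigr\}
=2k_2q_0\,n\cdot\operatorname{curl}n+k_2q_0^2 .
\]
So the perturbed integrand is \(2W_\alpha(n,\nabla n)+2k_2q_0\,n\cdot\operatorname{curl}n+k_2q_0^2+V(x,n)\). The constant \(k_2q_0^2\) plays no role. The remaining two terms are bounded pointwise by \(C|\nabla n|\) and by \(C\), respectively.

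\textbf{Step 1: a priori energy bound and almost-minimality.} For a minimizer \(n\) of the perturbed energy, a boundary point \(x_0\) and \(r\le r_0\), write \(D_r:=\Omega\cap B_r(x_0)\). I would first recover the basic density bound
\[
\int_{D_r}|\nabla n|^2\le Cr .
\]
To get it, compare \(n\) with the standard competitor built from \(g\) (or with a radial extension), use \eqref{eq:coercivity}, and absorb the chiral term by Young's inequality:
\[
2k_2|q_0|\,|\nabla n|\le \tfrac{\lambda}{2}|\nabla n|^2+C .
\]
Then for any \(w\in H^1(D_r;\Sph^2)\) with \(w=n\) on \(\partial D_r\) and \(\int_{D_r}|\nabla w|^2\le Cr\), Cauchy--Schwarz gives
\[
\Bigl|\int_{D_r}\bigl(2k_2q_0(n\cdot\operatorname{curl}n-w\cdot\operatorname{curl}w)+V(x,n)-V(x,w)\bigr)\Bigr|
\le C r^{3/2}\cdot r^{1/2}+Cr^3\le Cr^2 .
\]
Hence
\[
E_{\mathrm{OF}}(n;D_r)\le E_{\mathrm{OF}}(w;D_r)+Cr^2 .
\]
After the scaling \(n_r(y)=n(x_0+ry)\), which divides energies by \(r\), this says that \(n_r\) is an almost-minimizer of the unperturbed \(E_{\mathrm{OF}}\) with error \(O(r)\) on unit-scale domains.

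\textbf{Step 2: rerun the proof of Theorem~\ref{thm:main} for almost-minimizers.} I would go through the ingredients in order.
\begin{enumerate}
\item \emph{Boundary \(\varepsilon\)-regularity and small-energy decay.} The Lin--Wang boundary partial regularity, and whatever decay lemma the proof uses, hold for \(\omega(r)=Cr\) almost-minimizers by the standard Campanato/harmonic-approximation argument. Once the energy is small, the Euler--Lagrange system is a uniformly elliptic system with smooth lower-order terms, so higher regularity follows from Schauder theory.
\item \emph{Strong compactness of blow-ups.} Since the error tends to zero, the usual comparison/extension argument applies. Sequences \(n_{r_k}\) converge strongly in \(H^1_{\mathrm{loc}}\) to a minimizer of the unperturbed, autonomous energy on a half-space with constant boundary trace \(g(x_0)\). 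The limit sees neither \(q_0\) nor \(V\), since \(V\) is evaluated at \(x_0+r_ky\) and contributes only \(O(r_k^2)\).
\item \emph{Rigidity and stress-measure steps.} These are applied only to the limiting unperturbed half-space minimizers, so they carry over verbatim.
\end{enumerate}

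\textbf{Step 3: main obstacle.} The main obstacle is any step in the proof that uses \emph{exact} stationarity, for instance the inner-variation identity defining the boundary stress measure, or a quantitative comparison at a fixed scale rather than in the limit. For such identities I would derive the perturbed domain-variation formula. The extra terms contribute
\[
\int 2k_2q_0\,\bigl(n\cdot\operatorname{curl}n\,\operatorname{div}X-\text{(first-order terms in }\nabla X)\bigr)
+\int\bigl(V\operatorname{div}X+\partial_xV\cdot X\bigr).
\]
After rescaling to unit size, these are bounded by \(Cr\|X\|_{C^1}\), and so vanish in the blow-up limit. This makes every estimate of the original proof hold up to \(o(1)\) errors as \(r\to0\). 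The conclusion, that tangent objects at \(x_0\) are trivial and hence the energy is small at some scale, then follows exactly as before. Finally, Problem~3 of Lin and Liu is the case \(q_0=0\) with
\[
V(x,n)=-\chi\,(H(x)\cdot n)^2 ,
\]
for a smooth field \(H\), so it is covered.
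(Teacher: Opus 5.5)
Your proposal is correct and follows essentially the paper's route. The chiral and potential terms are absorbed by Young's inequality into a quasi-minimality for the coercive quadratic part, with errors that vanish after dividing by the energy scale $r$. The density bound, small-energy decay and blow-up compactness therefore survive, and the stress-measure and Liouville steps are applied only to the exact autonomous class-A limit. For that reason your Step~3 worry about exact stationarity is unnecessary.

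The one point to tighten is Step~1. The linear density bound should come from the half-ball filling competitor of Lemma~\ref{lem:filling} together with the dyadic iteration of Lemma~\ref{lem:upperdensity}, which your Young absorption makes robust to the perturbation. A radial extension alone gives only $\int_{B_r^+}|\nabla n|^2\le Cr^{\lambda/\Lambda}$ for anisotropic densities, which is weaker than the linear bound you need.
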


\begin{proof}
Write \(E_W\) for the coercive quadratic part. If \(u\) minimizes the
perturbed energy in \(B_r\) and \(v=u\) on \(\partial B_r\), the first-order
term, Young's inequality, and the boundedness of the smooth potential give
\begin{equation*}
E_W(u;B_r)
\leq
E_W(v;B_r)
+Cr\bigl(E_W(u;B_r)+E_W(v;B_r)\bigr)+Cr^2.
\end{equation*}
For sufficiently small \(r\), the term containing \(E_W(u;B_r)\) is
absorbed into the left-hand side. Thus \(u\) is a quadratic-energy
quasi-minimizer with multiplicative error \(1+Cr\) and additive error
\(Cr^2\).

This inequality supplies each perturbative estimate used below. The filling
comparison gives the same uniform upper density bound with an additional
\(Cr\) term. The compactness splice, divided by the three-dimensional energy
scale \(r\), has an error tending to zero. The small-energy iteration becomes
\[
\Theta_u(a,\theta r)
\leq
\frac{1}{2}\Theta_u(a,r)+Cr,
\]
whose error is summable on dyadic scales; hence it gives both clearing-out and
the singular-point lower density bound. Finally, in any blow-up sequence the
quasi-minimality errors vanish, so the limit is an exact class-A minimizer of
the autonomous quadratic density. The stress-measure and Liouville parts of
the proof therefore apply without change. Smoothness follows from the smooth
coefficients of the chiral term and \(V\).
\end{proof}

Lin and Liu's Problem~4~\cite{LinLiu2001}, which couples the director field to an electric
potential through a Maxwell constraint, is not covered here. Eliminating the
electric potential generally produces a non-local functional of the director,
whereas the present argument relies on the locality and quadratic-gradient
structure of the Oseen--Frank energy. Accordingly, our result resolves the
Lin--Liu boundary-regularity question for Problems~1 and 3, but makes no claim
about Problem~4.

\subsection{Strategy of the proof}

Assume, for contradiction, that a minimizer has a singular point
\(a\in\partial\Omega\). After flattening the boundary and taking a blow-up at
\(a\), the compactness and clearing-out theory produces a non-constant class-A
minimizer
\[
U:\mathbb R^3_+\to\mathbb S^2
\]
of an autonomous analytic quadratic density. Its trace on the flat boundary
is constant, and it satisfies the two-sided growth estimate
\[
cR
\leq
E(U;B_R^+)
\leq
CR
\quad R>0.
\]
The lower bound records the non-triviality inherited from the original
singular point; the upper bound is the uniform density estimate.

The replacement for monotonicity is obtained from one-sided inner variations.
Extend \(U\) constantly across the flat boundary. Variations pushing points
towards the constant side remain admissible, whereas variations in the
opposite direction need not. This one-sided minimality shows that the normal
column of the energy-momentum tensor defines a non-negative Radon measure
\(\mu_U\) on \(\partial\mathbb R^3_+\). On any smooth boundary patch,
\[
\mathrm{d} \mu_U
=
W(q,\partial_3U\otimes e_3)\,\mathrm{d} x',
\]
so that
\[
\mu_U=0
\quad\Leftrightarrow\quad
\partial_3U=0
\]
on that patch.

The linear upper energy bound implies
\[
\mu_U(\mathbb R^2)<+\infty.
\]
We then perform a blow-down. Namely, for
\(R_j\to+\infty\), set
\[
U_j(x):=U(R_jx).
\]
In dimension three the boundary stress has the exact scaling law
\[
\mu_{U_j}(A)=\mu_U(R_jA).
\]
Since \(\mu_U\) has finite total mass, any compact subset of
\(\mathbb R^2\backslash\{0\}\) is eventually mapped to an annulus escaping to
infinity. Hence the boundary stress of the blow-down limit \(V\) is supported
at the origin.

Partial boundary regularity provides a regular boundary disk away from the
origin. On that disk,
\[
V=q,
\quad
\partial_3V=0.
\]
Thus \(V\) has both constant Dirichlet data and vanishing normal derivative.
Analytic Cauchy uniqueness implies that \(V=q\) on a non-empty interior open
set. The interior singular set has Hausdorff dimension strictly less than
one, so its complement is connected. Analytic continuation therefore gives
\[
V\equiv q
\quad\text{a.e. in }\mathbb R^3_+.
\]
This contradicts the positive energy lower bound inherited under the
blow-down.

The dimension-three hypothesis enters critically at the large-scale stage.
For a quadratic energy in domain dimension \(m\), the natural energy growth
is \(R^{m-2}\), while the boundary stress satisfies
\[
\mu_{U_R}(A)=R^{3-m}\mu_U(RA).
\]
Only when \(m=3\) is this measure scale invariant and controlled by a finite
total mass under the natural energy bound. This critical scaling is what
forces the blow-down stress to concentrate at a single boundary point.

\section{Boundary compactness and small-energy theory}

\subsection{Geometric setting and terminology}

Throughout this section,
\[
B_r^+:=B_r\cap\{x_3>0\},\quad
\Gamma_r:=B_r\cap\{x_3=0\},\quad
S_r^+:=\partial B_r\cap\{x_3>0\}.
\]
The set \(\Gamma_r\) is the flat Dirichlet face and \(S_r^+\) is the open hemispherical part of the boundary. All constants depend only on the ellipticity ratio, the indicated coefficient bounds, and the target \(\Sph^2\), unless additional dependence is stated explicitly.

A map \(u\in H^1(B_r^+;\Sph^2)\) is called a local minimizer if, for any Lipschitz subdomain \(G\subset\subset B_r^+\cup\Gamma_r\) and any \(v\in H^1(G;\Sph^2)\) having the same trace as \(u\) on the relative boundary of \(G\), one has
\[
E_W(u;G)\leq E_W(v;G).
\]
A point is regular if \(u\) is H\"older continuous in a neighbourhood of
that point, up to the flat face when the point lies on \(\Gamma_r\);
otherwise it is singular. Once a point is regular, the Euler system gives
the regularity allowed by the coefficients and boundary data; in particular,
regular points are smooth when those data are smooth. In dimension three the
scale-invariant energy is
\[
\Theta_u(a,r):=r^{-1}E_W\bigl(u;B_r(a)\cap\Omega\bigr),
\]
because a quadratic gradient energy scales like \(r^{3-2}=r\).

\subsection{Blow-up densities and the compactness package}

Boundary flattening and rescaling transform the coercive Oseen--Frank density
\(W_\alpha\) into a family
\[
W_j(y,z,P)
=
J_j(y)W_\alpha\bigl(z,PB_j(y)\bigr),
\]
where
\[
J_j(y):=\lvert\det DF(r_jy)\rvert,
\quad
B_j(y):=DF(r_jy)^{-1}.
\]
Here \(F\) is a boundary-flattening diffeomorphism satisfying
\[
F(0)=0,
\quad
DF(0)=I,
\]
and \(r_j\downarrow0\). Consequently,
\[
J_j\to1,
\quad
B_j\to I,
\quad
W_j\to W_\alpha
\]
locally in \(C^1\). Thus the rescaled densities are generally
\(x\)-dependent, whereas their blow-up limit is autonomous.

The following lemma collects the compactness and regularity properties used
later.

\begin{lemma}[Uniform boundary blow-up package]
\label{lem:package}
For each \(j\), let
\[
W_j(x,z,P)
=
\frac{1}{2}
A_{j,ik}^{ab}(x,z)P_i^aP_k^b
\]
be a \(C^2\) density on
\[
B_2^+\times\mathbb S^2\times\mathbb R^{3\times3}.
\]
Assume that
\[
\lambda| P|^2
\leq
A_{j,ik}^{ab}(x,z)P_i^aP_k^b
\leq
\Lambda| P|^2,
\quad
0<\lambda\leq\Lambda,
\]
that the \(C^2\) norms of \(A_j\) are uniformly bounded, and that
\[
A_j\to A_0
\quad\text{in }C^1,
\]
where \(A_0=A_0(z)\) is independent of \(x\).

Let \(u_j\in H^1(B_2^+;\mathbb S^2)\) be local minimizers of \(E_{W_j}\),
with flat-boundary traces
\[
g_j\to q\in\mathbb S^2
\quad\text{in }C^2,
\]
and suppose that
\[
\sup_j\int_{B_2^+}|\nabla u_j|^2\,\mathrm{d} x<+\infty.
\]
After passing to a subsequence, the following conclusions hold.

\begin{enumerate}[label=$(\theenumi)$]
\item\label{Strong compactness} \emph{Strong compactness.}
There exists \(u_0\in H^1(B_2^+;\mathbb S^2)\) such that
\[
u_j\to u_0
\quad\text{strongly in }H^1(B_s^+)
\]
for any \(s<2\). Moreover,
\[
\operatorname{Tr}_{\Gamma_2}u_0=q,
\]
and \(u_0\) locally minimizes
\[
W_0(z,P)
=
\frac{1}{2}A_{0,ik}^{ab}(z)P_i^aP_k^b.
\]

\item\label{Energy decay and clearing-out} \emph{Energy decay and clearing-out.}
In dimension three there exist
\[
\varepsilon_*>0,
\quad
\vartheta\in\left(0,\frac{1}{4}\right),
\]
depending only on the structural bounds, such that
\[
\Theta_u(a,r)<\varepsilon_*
\quad\Rightarrow\quad
\Theta_u(a,\vartheta r)
\leq
\frac{1}{2}\Theta_u(a,r)
\]
in the autonomous constant-trace case. Iteration gives H\"older continuity at
\(a\); if the coefficients and boundary data are smooth, elliptic
bootstrapping gives smoothness.
Consequently,
\[
a\in\operatorname{Sing}u
\quad\Rightarrow\quad
\Theta_u(a,r)\geq\varepsilon_*
\]
for any sufficiently small \(r\).

For smoothly flattened boundaries and smooth non-constant traces, the decay
estimate contains lower-order errors that vanish under blow-up.

\item\label{Upper density bound} \emph{Upper density bound.}
There exists \(M<+\infty\) such that
\[
E_W\bigl(u;B_r(a)\cap\Omega\bigr)\leq Mr
\]
at any \(a\in\overline\Omega\) and any sufficiently small \(r\). The
constant is uniform along the preceding blow-up sequences.

\item\label{Partial regularity} \emph{Partial regularity.}
The interior singular set satisfies
\[
\dim_{\mathcal H}\operatorname{Sing}_{\mathrm{int}}u<1.
\]
The map is smooth on the regular set when the coefficients are smooth, and is
real analytic when the limiting autonomous coefficients are real analytic in
\(z\). If the flat trace is constant, any boundary disk contains a non-empty
relatively open regular patch.
\end{enumerate}

The same conclusions hold, after pullback, on smoothly converging flattened
domains.
\end{lemma}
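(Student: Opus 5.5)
The lemma collects known results, so the plan is to check each item against the standard Luckhaus / Hardt--Kinderlehrer--Lin machinery, uniformly in \(j\), and to use \(A_j\to A_0\) in \(C^1\) only where a limit is taken.

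\textbf{Item~\ref{Strong compactness}.} The uniform \(H^1\) bound and Rellich give a weak \(H^1\) limit \(u_0\). The trace is continuous under weak convergence, so \(\operatorname{Tr}u_0=q\). Strong convergence and minimality of \(u_0\) follow from a Luckhaus-type splicing lemma with fixed-boundary modifications.

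Fix \(s<2\) and a comparison map \(v\) for \(u_0\) in \(B_\sigma^+\), \(\sigma<s\), which equals \(u_0\) near \(S_\sigma^+\) and equals \(q\) on \(\Gamma_\sigma\). Choose a good radius \(\sigma\) by Fubini, so that
\[
\int_{S_\sigma^+}\bigl(|\nabla u_j|^2+|\nabla u_0|^2\bigr)
\]
is bounded and \(\|u_j-u_0\|_{L^2(S_\sigma^+)}\to0\). On a thin shell \(B_\sigma^+\setminus B_{(1-\delta)\sigma}^+\), interpolate between \(u_j\) and \(v\). Luckhaus's lemma keeps the interpolant close to \(\Sph^2\), and nearest-point projection then returns it to \(\Sph^2\). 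Near the flat face, correct \(v\) by the smooth data \(g_j\to q\) in \(C^2\); the \(C^2\) convergence keeps this correction cheap.

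Comparison then gives
\[
\limsup_j E_{W_j}(u_j;B_\sigma^+)\le E_{W_0}(v;B_\sigma^+)+o_\delta(1).
\]
Lower semicontinuity of the convex quadratic forms, together with \(A_j\to A_0\) uniformly, yields minimality of \(u_0\). Taking \(v=u_0\) gives convergence of energies. Uniform convexity in \(P\) upgrades this to strong convergence: expand \(A_j(u_j)(\nabla u_j-\nabla u_0,\nabla u_j-\nabla u_0)\) and use \(A_j(x,u_j)\to A_0(u_0)\) a.e. with dominated convergence.

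\textbf{Item~\ref{Energy decay and clearing-out}.} Argue by contradiction via blow-up. Suppose the decay fails along sequences with \(\Theta\to0\). Rescale \(w_k=(u_k-\bar u_k)/\varepsilon_k^{1/2}\), with the mean taken over the half ball; the constant trace \(q\) forces \(\bar u_k\to q\). Item~\ref{Strong compactness} and the Luckhaus argument give strong convergence of \(w_k\) to a solution of the linearized constant-coefficient Legendre--Hadamard system at \(q\). That solution has zero Dirichlet data tangent to \(T_q\Sph^2\), so boundary \(L^2\)/Campanato estimates give the decay at ratio \(\vartheta\), a contradiction.

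Iteration yields a Morrey bound and hence Hölder continuity. Schauder theory for the Euler system bootstraps to smoothness, and analytic regularity (Morrey) gives analyticity when \(A_0\) is analytic in \(z\). Clearing-out is the contrapositive. Lower-order errors from flattening or non-constant traces contribute \(O(r)\) terms, which are summable.

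\textbf{Item~\ref{Upper density bound}.} Compare with a cone-type extension, as in the HKL filling argument. On \(B_r(a)\cap\Omega\), choose by Fubini a radius where the energy on the sphere is at most \(C r^{-1}E(u;B_{2r})\). Bound \(E(u;B_{2r})\) by a global constant times \(r\) using a homogeneous-degree-zero extension \(v(x)=u(a+r(x-a)/|x-a|)\). This gives
\[
E(v)\le Cr\int_{\partial B_r}|\nabla_T u|^2 .
\]
Strictly this only gives a recursive bound, so instead use the standard HKL argument: compare with the extension of the boundary data plus the minimizer's own energy inequality, which yields \(E\le Mr\) with \(M\) depending on \(\|g\|_{C^1}\) and the structure constants. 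On the flat part, a cone from an interior point with trace \(g\) costs \(Cr\) because \(g\) is smooth; this is the uniformity along blow-ups.

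\textbf{Item~\ref{Partial regularity}.} Use the HKL dimension reduction. Strong compactness gives homogeneous-free tangent-map arguments only through \(\varepsilon\)-regularity and the covering lemma: \(\mathcal H^1(\operatorname{Sing})=0\), and then \(\dim<1\) via Federer reduction, which needs only compactness plus the fact that zero-dimensional-invariant limits are regular in 2D. Boundary regular patches follow from Lin--Wang: \(\mathcal H^1(\Sigma_1)=0\), so any disk contains a regular point, and the regular set is open.

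\textbf{Main obstacle.} The main obstacle is the dimension-reduction step in item~\ref{Partial regularity}, since no monotonicity is available and the tangent maps are not homogeneous. I would rely on the HKL approach, which proves \(\dim<1\) through higher integrability and reverse Hölder estimates instead of monotonicity.
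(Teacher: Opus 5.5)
Your items (1) and (2) follow the paper's route: relative Luckhaus splicing with the flat trace preserved, then linearization at \(q\) and boundary estimates for the frozen constant-coefficient system. They are fine as sketches.

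The genuine gap is item (3). As you observe, the cone competitor only gives \(E(u;B_s)\le Cs\int_{\partial B_s}|\nabla_{\tan}u|^2\). With a good radius \(s\in(\frac r2,r)\) this yields \(\Theta_u(a,\frac r2)\le C\,\Theta_u(a,r)\) with \(C>1\). In differential form it reads \(E\le CrE'\), which integrates only to \(E(r)\lesssim r^{1/C}\) and leaves \(r^{-1}E(r)\) unbounded as \(r\to0\). Your fallback, comparing with an extension of the boundary data plus the energy inequality, does not close this. Extending \(g\) into \(\Omega\) bounds the total energy, not the scale-invariant quantity \(r^{-1}E(u;B_r(a)\cap\Omega)\) at small \(r\). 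For an interior ball, the only available ``boundary data'' is the trace of \(u\) on \(\partial B_r(a)\), which returns you to the same recursion. Yet a bound at all small scales, with \(M\) independent of the map, is exactly what Proposition~\ref{prop:tangent} needs: the blow-up energies on \(B_R^+\) are \(r_j^{-1}\) times energies of \(n\) at scale \(r_jR\).

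The missing idea is a filling whose cost is sublinear in the tangential energy, namely Lemma~\ref{lem:filling}. It works in four steps:
\begin{enumerate}
\item Take the componentwise harmonic extension \(v\) of the trace \(h\).
\item Bound its energy by \(|h|_{\dot H^{1/2}}^2\le C\|h-\xi\|_{L^2}\|\nabla_{\tan}h\|_{L^2}\).
\item Use \(|h|=1\), with \(\xi=q\) on the flat face, to get \(\|h-\xi\|_{L^2}\le Cr\).
\item Return to \(\mathbb S^2\) by the Hardt--Lin averaged projection \(\phi_a^{-1}\circ\pi_a\circ v\), \(a\in B_{1/2}\). This leaves the trace unchanged and costs only a constant factor, because \(|y-a|^{-2}\) is integrable in \(\mathbb R^3\).
\end{enumerate}
The result is \(E\le Cr\bigl(\int_{S_s^+}|\nabla_{\tan}u|^2\bigr)^{1/2}\), hence \(e(\frac r2)\le C_0\sqrt{e(r)}\). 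It is the square root that makes the dyadic iteration settle below a universal level of order \(C_0^2\). The same filling also produces the Caccioppoli-type weak reverse H\"older inequality for \(\mathbb S^2\)-valued minimizers (Lemma~\ref{lem:reverseholder}). So the higher integrability you invoke in (4) rests on it as well, unless you simply cite HKL for the interior.

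Two smaller points about item (4).
\begin{itemize}
\item Drop the Federer-reduction sentence: without monotonicity there are no homogeneous tangent maps to reduce. Use the route of your last paragraph, which is the paper's. At a singular point, \(\varepsilon_*r\le\int_{B_r}|\nabla u|^2\le Cr^{3(1-2/p)}\bigl(\int_{B_r}|\nabla u|^p\bigr)^{2/p}\), so \(r^{3-p}\le C\int_{B_r}|\nabla u|^p\). A Vitali covering then gives \(\dim\le3-p<1\).
\item For the boundary patch, Lin--Wang concerns Oseen--Frank minimizers on bounded domains, whereas the lemma is applied to half-space limits of general autonomous densities. The paper argues directly instead. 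A Vitali argument shows that boundary points with \(\limsup_{r\downarrow0}r^{-1}\int_{B_r(a)\cap\Omega}|\nabla u|^2>0\) form an \(\mathcal H^2\)-null set. Hence every disk contains a point of vanishing density, and that point is regular by (2).
\end{itemize}
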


\begin{proof}
For the property \ref{Strong compactness}, the energy bound gives, after taking a subsequence,
\[
u_j\rightharpoonup u_0
\quad\text{in }H^1(B_2^+),
\quad
u_j\to u_0
\quad\text{in }L^2(B_2^+).
\]
The trace theorem gives
\[
\operatorname{Tr}u_0=q.
\]
Lemma~\ref{lem:luckhaus} joins a competitor for \(u_0\) to \(u_j\) in a thin
half-annulus while preserving the flat trace. The joining energy tends to
zero. Minimality and weak lower semicontinuity then give
\[
E_{W_0}(u_0;B_s^+)
\leq
E_{W_0}(v;B_s^+)
\]
for any admissible competitor \(v\). Repeating the joining argument with
\(v=u_0\) gives convergence of the energies. Uniform strict convexity then
yields
\[
\nabla u_j\to\nabla u_0
\quad\text{in }L^2(B_s^+).
\]
This proves \ref{Strong compactness}; see Proposition~\ref{prop:compactness}.

For \ref{Energy decay and clearing-out}, Proposition~\ref{prop:improvement} proves the one-scale decay by
contradiction. If the estimate failed, maps with energies
\[
\epsilon_j^2
:=
\int_{B_1^+}|\nabla u_j|^2
\to0
\]
could be normalized by
\[
w_j:=\frac{u_j-q_j}{\epsilon_j}.
\]
Part \ref{Strong compactness} and the reverse H\"{o}lder estimate give strong convergence to a
\(T_q\mathbb S^2\)-valued solution of a constant-coefficient linear elliptic
system with zero flat trace. Boundary estimates for the limiting system give
\[
\rho^{-1}\int_{B_\rho^+}|\nabla w|^2
\leq
C\rho^2\int_{B_{\frac{3}{4}}^+}|\nabla w|^2,
\]
contradicting the assumed failure of decay. Iteration gives, for some
\(\beta>0\),
\[
E_W\bigl(u;B_\rho(a)\cap\Omega\bigr)
\leq
C\rho^{1+2\beta}.
\]
Morrey's lemma and elliptic bootstrapping imply regularity. Taking the
contrapositive gives the singular-point lower density bound.

For \ref{Upper density bound}, coarea provides \(s\in(\frac{r}{2},r)\) such that
\[
\int_{\partial B_s^+}|\nabla_{\tan}u|^2
\leq
C\Theta_u(a,r).
\]
Using Lemma~\ref{lem:filling} to fill the trace on
\(\partial B_s^+\), minimality gives
\[
\Theta_u(a,\frac{r}{2})
\leq
C_0\sqrt{\Theta_u(a,r)}+o(1).
\]
If \(\Theta_u(a,r)\) is large, the right-hand side is at most a fixed fraction
of \(\Theta_u(a,r)\); if it is bounded, the recurrence preserves a uniform
bound. Dyadic iteration proves
\[
\Theta_u(a,r)\leq M,
\]
which is equivalent to \ref{Upper density bound}. This is Lemma~\ref{lem:upperdensity}.

For the property \ref{Partial regularity}, the reverse H\"older estimate gives
\[
\nabla u\in L^p_{\mathrm{loc}}
\quad\text{for some }p>2.
\]
Combining this with the singular-point lower density bound and a covering
argument yields
\[
\dim_{\mathcal H}\operatorname{Sing}_{\mathrm{int}}u
\leq
3-p<1.
\]
On the regular set, energy decay initiates the standard elliptic bootstrap.
Smooth or analytic coefficients give the corresponding regularity.

Finally, let
\[
\nu:=|\nabla u|^2\,\mathrm{d} x.
\]
The Vitali argument in Subsection~\ref{subsec:singular-set} shows that
\[
\left\{
a\in\partial\mathbb R^3_+:
\limsup_{r\downarrow0}r^{-1}\nu(B_r(a))>0
\right\}
\]
has zero two-dimensional Hausdorff measure. Hence, any boundary disk contains
a point where the normalized energy tends to zero. Part \ref{Energy decay and clearing-out} gives regularity
at that point, and openness of the regular set gives a non-empty relatively
open regular patch.

Smoothly converging domains are reduced to the preceding argument by pullback.
The transformed coefficients retain uniform ellipticity and uniform \(C^2\)
bounds.
\end{proof}

\begin{remark}
The boundary conclusion in Lemma~\ref{lem:package}\ref{Partial regularity} gives only one non-empty
regular patch in each boundary disk. It does not assert full boundary
regularity and therefore does not use Theorem~\ref{thm:main}.
\end{remark}

\subsection{Relative interpolation and half-ball filling}

We use the following relative form of the Luckhaus interpolation lemma. Its last clause is the point that permits strong convergence on sets meeting the flat face.

\begin{lemma}[Flat-boundary Luckhaus interpolation]
\label{lem:luckhaus}
Let
\[
A_\delta^+
:=
B_1^+\backslash B_{1-\delta}^+,
\quad
E:=\partial S_1^+,
\quad
0<\delta<\frac{1}{8}.
\]
Suppose that
\[
v,w\in H^1(S_1^+;\mathbb S^2)
\]
have the same constant trace \(q\in\mathbb S^2\) on \(E\). Then there exists
\[
z\in H^1(A_\delta^+;\mathbb S^2)
\]
such that
\[
z(\omega)=v(\omega),
\quad
z((1-\delta)\omega)=w(\omega)
\quad\text{for a.e. }\omega\in S_1^+,
\]
and
\[
z=q
\quad\text{on }
A_\delta^+\cap\{x_3=0\}.
\]
Moreover,
\begin{equation}
\begin{aligned}
\int_{A_\delta^+}|\nabla z|^2\,\mathrm{d} x
\leq{}&
C\delta
\int_{S_1^+}
\bigl(
|\nabla_{\tan}v|^2+
|\nabla_{\tan}w|^2
\bigr)\,\mathrm{d} S +
C\delta^{-1}
\int_{S_1^+}| v-w|^2\,\mathrm{d} S.
\end{aligned}
\label{eq:luckhaus-estimate}
\end{equation}
The constant \(C\) is independent of \(v,w\), and \(\delta\).

The same conclusion holds, with uniform constants, for a family of
\(C^1\) half-balls converging in \(C^1\). More generally, if
\[
\operatorname{Tr}_E v=g_v,
\quad
\operatorname{Tr}_E w=g_w,
\quad
g_v,g_w\in H^1(E;\mathbb S^2),
\]
then the flat trace of \(z\) may be chosen to join \(g_v\) to \(g_w\), and
the right-hand side of \eqref{eq:luckhaus-estimate} acquires the additional
terms
\[
C\delta^{-1}
\| g_v-g_w\|_{L^2(E)}^2
+
C\delta
\left(
\|\nabla_{\tan}g_v\|_{L^2(E)}^2
+
\|\nabla_{\tan}g_w\|_{L^2(E)}^2
\right).
\]
\end{lemma}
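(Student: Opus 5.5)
The plan is to reduce to the classical interior Luckhaus lemma for maps from a 2-dimensional domain into \(\mathbb S^2\), run on the hemisphere \(S_1^+\) with the equator \(E\) treated as a Dirichlet edge. Parametrize \(A_\delta^+\) by \((t,\omega)\in[0,1]\times S_1^+\) through \(x=(1-t\delta)\omega\). Since \(S_1^+\) is a 2-manifold with boundary, bi-Lipschitz to the closed unit disk, it suffices to build \(z(t,\omega)\) on \([0,1]\times S_1^+\) with \(z(0,\cdot)=v\), \(z(1,\cdot)=w\) and \(z(t,\cdot)=q\) on \(E\). In these coordinates \(|\nabla z|^2\) is comparable to \(\delta^{-2}|\partial_t z|^2+|\nabla_\omega z|^2\) and the volume element is comparable to \(\delta\,\mathrm{d}t\,\mathrm{d}S\). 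Hence \eqref{eq:luckhaus-estimate} reduces to the bound
\[
\int_0^1\!\!\int_{S_1^+}\bigl(\delta^{-2}|\partial_tz|^2+|\nabla_\omega z|^2\bigr)
\leq C\int_{S_1^+}\bigl(|\nabla_{\tan}v|^2+|\nabla_{\tan}w|^2\bigr)+C\delta^{-2}\int_{S_1^+}|v-w|^2 .
\]

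\emph{Step 1: reflection.} To handle the edge, extend \(v,w\) by the constant \(q\) across \(E\) onto a slightly larger spherical cap \(S'\supset S_1^+\), for instance the cap \(\{x_3>-1/4\}\cap\partial B_1\). Because the traces agree with \(q\) on \(E\), the extensions \(\tilde v,\tilde w\) lie in \(H^1(S';\mathbb S^2)\). They have the same tangential energy as \(v,w\), and \(\tilde v-\tilde w\) vanishes on \(S'\setminus S_1^+\).

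\emph{Step 2: Luckhaus on a grid.} Apply the standard Luckhaus construction on \(S'\). Take a cubical, or triangulated, grid of mesh \(h\sim\delta\), adapted so that \(E\) is a union of 1-cells; this is possible because \(E\) is a smooth curve, using a bi-Lipschitz chart of \(S'\) flattening \(E\). Choose a translate by Fubini/averaging so that on the 1-skeleton the restrictions of \(\tilde v,\tilde w\) are \(H^1\) with energy controlled by \(h^{-1}\) times the surface energy, and \(\|\tilde v-\tilde w\|^2_{L^2}\) is controlled by \(h^{-1}\|v-w\|_{L^2}^2\). Then use the 1D Sobolev embedding and the \(h^{1/2}\)-oscillation estimate on edges.

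Shifting the grid moves \(E\) off the 1-skeleton. To avoid this, I would instead perform the averaging only over perturbations fixing the cells meeting \(E\), which suffices since the maps equal \(q\) there. Alternatively, note that on cells outside \(S_1^+\) both maps are identically \(q\), so any grid works there.

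On each cell \(K\times[0,1]\) define \(z\) on the 1-skeleton times \([0,1]\) by linear interpolation \((1-t)\tilde v+t\tilde w\), which stays within distance \(<1/2\) of \(\mathbb S^2\) when the edge oscillation is small. Then extend homogeneously (degree-zero) into 2-cells and 3-cells and project by \(\pi(y)=y/|y|\). On cells where the oscillation is large, the estimate is absorbed by the \(\delta^{-1}\|v-w\|^2\) term or the \(\delta\)-energy term in the usual way. Since extension to 3-cells of \([0,1]\times S'\) is homogeneous and target \(\mathbb S^2\) has dimension 2, the projection singularities are integrable (\(|\nabla z|^2\sim r^{-2}\) in dimension 3).

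\emph{Step 3: flat trace and restriction.} On \(E\times[0,1]\) both endpoints equal \(q\), so the interpolation is identically \(q\). Restricting to \(S_1^+\) gives \(z=q\) on the flat face \(A_\delta^+\cap\{x_3=0\}\). The \(C^1\)-converging family of half-balls follows because all charts have uniformly bounded bi-Lipschitz constants.

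\emph{Step 4: variable traces.} For the general clause with traces \(g_v,g_w\), first modify the extension outside \(S_1^+\). Extend \(v\) and \(w\) across \(E\) by \(g_v(\omega')\), \(g_w(\omega')\) held constant in the normal direction over a collar of width \(\sim\delta\), rescaling so that the grid cells touching \(E\) see these collars. The edge contributions then produce exactly the additional terms \(\delta^{-1}\|g_v-g_w\|^2_{L^2(E)}+\delta\|\nabla_{\tan}g\|^2_{L^2(E)}\). Restricting \(z\) to \(E\times[0,1]\) gives the joining flat trace.

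The main obstacle I expect is Step 2: making the Fubini choice of grid compatible with keeping \(E\) inside the 1-skeleton, while keeping the averaged bounds uniform near the edge. I would first try the fixed-chart approach, flattening \(E\) to a line and only translating the grid parallel to \(E\) and by multiples preserving the line \(\{x_3=0\}\). The remaining averaging in the normal direction is then replaced by the observation that the maps are constant on the reflected side.
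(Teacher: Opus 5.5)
There is a genuine gap, and it is not the grid bookkeeping you single out but the constraint $|z|=1$. The lemma asserts an $\mathbb S^2$-valued $z$ for arbitrary $v,w,\delta$, with no smallness hypothesis. The Luckhaus construction you invoke, however, only produces an $\mathbb R^3$-valued interpolant together with a pointwise bound on its distance to $\mathbb S^2$. Composing with $y/|y|$ is legitimate only where that interpolant stays away from $0$. That is controlled by $\sup|\tilde v-\tilde w|$ on the $1$-skeleton, since $|(1-t)a+tb|^2=1-t(1-t)|a-b|^2$ for $a,b\in\mathbb S^2$; it is not controlled by the edge oscillation.

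Concretely, suppose $v\equiv p$ and $w\equiv -p$ on an open set away from $E$. Every edge there has zero oscillation, yet the interpolant is $(1-2t)p$. Its degree-zero extension into each cell is a multiple of $p$ vanishing on the mid-slice, so $y/|y|$ of it jumps from $p$ to $-p$ across $t=\frac12$, and $z\notin H^1$. Generically, where $\tilde v=-\tilde w$ at isolated points of an edge, the interpolant vanishes at points of the side faces and the degree-zero extension vanishes along segments. Near those segments $|\nabla z|^2\sim\rho^{-2}$, which is not integrable in three dimensions; these are not the harmless point singularities you mention.

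Your statement that bad cells are \emph{absorbed in the usual way} presupposes an $\mathbb S^2$-valued replacement on those cells costing $O(\delta)$ plus local terms. That replacement is precisely what is missing; once it exists, counting bad cells is routine. The applications do not supply smallness either: with the choice $\delta_j\ge d_j^{1/2}$ in Proposition~\ref{prop:compactness}, $\delta_j^{-2}d_j$ is only bounded.

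The paper never leaves $\mathbb S^2$. It interpolates along edges by geodesics, uses $\pi_1(\mathbb S^2)=0$ with a quantitative extension on $2$-cells, extends by degree zero into $3$-cells, and works relative to the constant subcomplex over $E$.

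Within your framework the quickest repair is the averaged projection from the paper's proof of Lemma~\ref{lem:filling}. Replace $y/|y|$ by $\phi_a^{-1}\circ\pi_a$, where $\pi_a(y)=(y-a)/|y-a|$ and $\phi_a=\pi_a|_{\mathbb S^2}$, with $a\in B_{1/2}(0)$ chosen by averaging and using $\int_{B_{1/2}(0)}|y-a|^{-2}\,\mathrm{d}a\le C$ uniformly in $y$. Because $\phi_a^{-1}\circ\pi_a$ is the identity on $\mathbb S^2$, it preserves $v$, $w$, and the flat value $q$, or the joined traces $g_v,g_w$ in the last clause. With this device no grid is needed at all. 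The plain interpolation $(1-t)v(\omega)+tw(\omega)$ in your coordinates already has energy bounded by the right-hand side of \eqref{eq:luckhaus-estimate}, and its flat trace is exactly $q$.

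If you keep the grid, your fix for $E$ is also incomplete. Translating only parallel to $E$ leaves the grid lines parallel to $E$ unaveraged. Your alternative, that any grid works, fails because a $3$-cell straddling $E\times[0,1]$ receives a degree-zero extension that is not $q$ on the flat face. You would need, for instance, to average over the thickness of the first layer of cells along $E$.
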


\begin{proof}
The proof is the relative version of the interpolation construction of
Luckhaus~\cite[Lemma~1]{Luckhaus1988}.

\medskip
\noindent
\emph{Product representation.}
The map
\[
\Phi:S_1^+\times(0,\delta)\rightarrow A_\delta^+,
\quad
\Phi(\omega,t):=(1-t)\omega,
\]
is bi-Lipschitz, with constants independent of
\(0<\delta<\frac{1}{8}\). It is therefore enough to construct the interpolation on
\[
\mathcal C_\delta:=S_1^+\times(0,\delta).
\]

\medskip
\noindent
\emph{Cell decomposition.}
Decompose \(\mathcal C_\delta\) into cells of diameter comparable to
\(\delta\). Prescribe
\[
z(\omega,0)=v(\omega),
\quad
z(\omega,\delta)=w(\omega),
\]
and set
\[
z=q
\quad\text{on }E\times(0,\delta).
\]
Choose a translated grid for which the traces on the one-skeleton satisfy
\[
\begin{aligned}
\mathcal E_1
\leq{}&
C\delta
\int_{S_1^+}
\bigl(
|\nabla_{\tan}v|^2+
|\nabla_{\tan}w|^2
\bigr)\,\mathrm{d} S\\
&+
C\delta^{-1}
\int_{S_1^+}| v-w|^2\,\mathrm{d} S.
\end{aligned}
\]
Such a grid exists by the averaging argument in the Luckhaus construction.

\medskip
\noindent
\emph{Extension over the cells.}
Interpolate along the edges by minimizing geodesics in \(\mathbb S^2\).
Since
\[
\pi_1(\mathbb S^2)=0,
\]
the resulting maps on the boundaries of the two-dimensional cells are
null-homotopic. More is needed than this topological fact: after rescaling
each cell to unit size, the quantitative extension step in the Luckhaus
construction~\cite{Luckhaus1988} gives an \(H^1\) extension whose Dirichlet
energy is bounded by a universal multiple of the one-skeleton energy.
Scaling back and summing over the cells preserves the bound
\(C\mathcal E_1\). Extend each two-dimensional trace radially into the
corresponding three-dimensional cell. The possible point singularity at the
center of a cell has finite \(H^1\) energy in dimension three. Consequently,
\[
\int_{\mathcal C_\delta}|\nabla z|^2
\leq C\mathcal E_1,
\]
which gives \eqref{eq:luckhaus-estimate} after composition with \(\Phi^{-1}\).

For every cell meeting \(E\times(0,\delta)\), the construction is performed
relative to the constant subcomplex on which \(z=q\). Hence the flat trace
remains exactly \(q\).

\medskip
\noindent
\emph{Unequal flat traces.}
If the traces are \(g_v\) and \(g_w\), apply the same cell construction on
\(E\times(0,\delta)\). Its normal and tangential contributions are bounded by
\[
C\delta^{-1}\| g_v-g_w\|_{L^2(E)}^2
\]
and
\[
C\delta
\left(
\|\nabla_{\tan}g_v\|_{L^2(E)}^2
+
\|\nabla_{\tan}g_w\|_{L^2(E)}^2
\right),
\]
respectively. The remaining cells are treated relative to this flat
interpolation.

Finally, a uniformly bi-Lipschitz flattening transfers the construction to
\(C^1\) half-balls and preserves the estimate up to a uniform multiplicative
constant.
\end{proof}

We shall also use the following filling consequence. It follows from the same cell construction, now coning the whole boundary after choosing the grid size optimally.

\begin{lemma}[Half-ball filling estimate]
\label{lem:filling}
Let
\[
\Sigma_r:=\partial B_r^+=S_r^+\cup\Gamma_r,
\]
and suppose that
\[
h\in H^1(\Sigma_r;\mathbb S^2).
\]
Then there exists
\[
H\in H^1(B_r^+;\mathbb S^2),
\quad
\operatorname{Tr}_{\Sigma_r}H=h,
\]
such that, for every \(\xi\in\mathbb R^3\),
\begin{equation}
\int_{B_r^+}|\nabla H|^2\,\mathrm{d} x
\leq
C
\left(
\int_{\Sigma_r}|\nabla_{\tan}h|^2\,\mathrm{d} S
\right)^{\frac{1}{2}}
\left(
\int_{\Sigma_r}| h-\xi|^2\,\mathrm{d} S
\right)^{\frac{1}{2}}.
\label{eq:halfball-filling}
\end{equation}
The constant \(C\) is independent of \(r\), \(h\), and \(\xi\).

If \(h=q\in\mathbb S^2\) on \(\Gamma_r\), then
\begin{equation}
\int_{B_r^+}|\nabla H|^2\,\mathrm{d} x
\leq
Cr
\left(
\int_{S_r^+}|\nabla_{\tan}h|^2\,\mathrm{d} S
\right)^{\frac{1}{2}}.
\label{eq:constant-trace-filling}
\end{equation}
\end{lemma}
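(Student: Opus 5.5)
The plan is to reduce to the unit scale, flatten the half-ball to a ball by a bi-Lipschitz map, and then use a Luckhaus-type grid construction in which the grid size is chosen optimally. \emph{First}, by scaling \(x\mapsto rx\) we may take \(r=1\). Both sides of \eqref{eq:halfball-filling} scale like \(r\): the left-hand side and \(\int_{\Sigma_r}|\nabla_{\tan}h|^2\) scale like \(r\), and \(\int|h-\xi|^2\) scales like \(r^2\). Next, fix a bi-Lipschitz homeomorphism \(\Psi:\overline{B_1}\to\overline{B_1^+}\), for instance a radial reparametrization about an interior point such as \((0,0,\tfrac12)\). It carries \(\partial B_1\) onto \(\Sigma_1\), and the Dirichlet energy and the boundary integrals change only by universal factors. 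It therefore suffices to fill a map \(h\in H^1(\partial B_1;\Sph^2)\) into \(B_1\).

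\emph{Second}, write \(D:=\int|\nabla_{\tan}h|^2\) and \(L:=\int|h-\xi|^2\).
\begin{itemize}[label=$\bullet$]
\item If \(L\ge D\), use the homogeneous extension \(H(x)=h(x/|x|)\). Its energy is \(C D\le C\sqrt{DL}\), which settles this case.
\item If \(L<D\), set \(\delta:=\sqrt{L/D}\in(0,1)\) and work on the collar \(\{1-\delta<|x|<1\}\cong\partial B_1\times(0,\delta)\).
\end{itemize}
In the collar, decompose into cells of size \(\delta\) and choose a translated grid whose one-skeleton satisfies the Fubini/averaging estimates of Lemma~\ref{lem:luckhaus}. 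The skeleton energy is then controlled by \(\delta D\), and the oscillation of \(h\) along the skeleton by \(\delta^{-1}L\) after subtracting \(\xi\). On the inner sphere \(|x|=1-\delta\) we aim to place a map whose energy is bounded by \(C\delta^{-1}\cdot\delta D\sim D\), but which is constant on most of the cells. Concretely, on each cell where the mean oscillation \(\delta^{-2}\int_{\text{cell}}|h-\xi|^2\) is small, we interpolate radially toward the nearest-point projection of \(\xi\) onto \(\Sph^2\); when \(\xi\) is far from \(\Sph^2\) this is any fixed point. Cells with large oscillation are coned off. By Chebyshev, their number is bounded by \(C\delta^{-2}\cdot\delta^{-2}\)-scaled fractions of \(L\). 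Their total cone energy is then \(\lesssim \delta\,(\text{skeleton energy})/\delta + \delta^{-1}L\), which is \(\lesssim \delta D+\delta^{-1}L = 2\sqrt{DL}\). The interior ball \(B_{1-\delta}\) is filled with the constant on the region where the inner trace is constant, and by cones over the remaining cells.

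\emph{Third}, the constant-trace case \eqref{eq:constant-trace-filling} follows from \eqref{eq:halfball-filling} with \(\xi=q\). Since \(h=q\) on \(\Gamma_r\), the \(\Gamma_r\) parts of both integrals vanish. Since \(|h-q|\le2\), we get \(\int_{S_r^+}|h-q|^2\le Cr^2\), and hence the right-hand side is at most \(Cr\bigl(\int_{S_r^+}|\nabla_{\tan}h|^2\bigr)^{1/2}\).

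The main obstacle is the second step: making the Luckhaus grid simultaneously \emph{good} for the tangential energy and for the \(L^2\) distance to \(\xi\), while also handling \(\xi\notin\Sph^2\). The point is to reach the product \(\sqrt{DL}\) rather than \(D+L\). If the cell-by-cell Chebyshev bookkeeping fails, the fallback is an interpolation inequality. We would first prove the two endpoint bounds, energy \(\lesssim D\) from the cone and energy \(\lesssim \delta D+\delta^{-1}L\) from the collar of width \(\delta\) followed by a constant fill. The second bound requires the inner trace to be \emph{exactly} constant, which we would arrange by a nearest-point projection \(\Pi\) applied to the mollification \(h_\delta\). The construction then needs \(\operatorname{dist}(h_\delta,\Sph^2)\) to be small off a set that is coned. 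This is exactly where the \(\pi_1(\Sph^2)=0\) extension of Lemma~\ref{lem:luckhaus} is used, and optimizing in \(\delta\) gives the claim.
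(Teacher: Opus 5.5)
Your outer scheme is a legitimate alternative to the paper's proof. The homogeneous extension costs exactly $D$, which suffices when $L\gtrsim D$. Otherwise a collar of width $\delta=\sqrt{L/D}$ that joins $h$ to a constant, followed by a constant fill, costs $\delta D+\delta^{-1}L=2\sqrt{DL}$.

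\textbf{The gap.} The collar step carries the whole estimate, and what you wrote does not establish it.
\begin{itemize}
\item On a good cell, a small value of $\delta^{-2}\int_{\mathrm{cell}}|h-\xi|^2$ gives no pointwise control of $h$, since $H^1$ does not embed in $L^\infty$ in two dimensions. So $h$ may still take values at or near $-\Pi(\xi)$. There geodesic interpolation toward $\Pi(\xi)$ is undefined and linear interpolation passes through $0$, so projecting back to $\mathbb S^2$ is uncontrolled.
\item On a bad cell, coning requires first prescribing the inner face. Any non-constant choice destroys the exactly constant inner trace that your constant fill of $B_{1-\delta}$ needs. The \emph{cones over the remaining cells} inside $B_{1-\delta}$ are never specified, and the Chebyshev count and energy tally are not derived.
\item The fallback through $\Pi(h_\delta)$ has the same defect. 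Its trace is not constant, and $\operatorname{dist}(h_\delta,\mathbb S^2)$ is small only where the energy at scale $\delta$ is small.
\end{itemize}

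\textbf{The fix.} No classification of cells is needed. Since $h$ is $\mathbb S^2$-valued, $\operatorname{dist}(\xi,\mathbb S^2)\le|h-\xi|$. Hence $|h-p|\le 2|h-\xi|$ for $p:=\Pi(\xi)$ (any $p$ if $\xi=0$), so $\int|h-p|^2\le 4L$. Then either of the following closes the step.
\begin{itemize}
\item Apply the cell construction of Lemma~\ref{lem:luckhaus} on the whole sphere with $v=h$ and $w\equiv p$. This gives an $\mathbb S^2$-valued collar map with inner trace exactly $p$ and energy at most $C(\delta D+\delta^{-1}L)$. Use the cone when $\delta\ge\frac18$, where $D\le 8\sqrt{DL}$, to respect the lemma's restriction on $\delta$.
\item More elementarily, take the linear interpolation $(1-t/\delta)h+(t/\delta)p$, whose energy is $\lesssim\delta D+\delta^{-1}L$. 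Compose it with the paper's $\phi_a^{-1}\circ\pi_a$ for a good $a\in B_{1/2}$; this map fixes both $h$ and $p$.
\end{itemize}
With this, your route is complete.

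\textbf{Comparison with the paper.} The paper avoids both the case split and the collar. It takes the componentwise harmonic extension, whose energy is bounded by $|h|_{\dot H^{1/2}(\Sigma_1)}^2\le C\|h-\xi\|_{L^2}\|\nabla_{\tan}h\|_{L^2}$. So the product structure comes from interpolation rather than from optimizing a width. It then restores the constraint and the exact trace with the averaged projection, at the cost of a constant factor because $|y-a|^{-2}$ is integrable in $\mathbb R^3$. Your repaired version trades the $\dot H^{1/2}$ trace and interpolation theory on the Lipschitz surface $\Sigma_1$ for an explicit construction. In its Luckhaus form it also needs the quantitative $\pi_1(\mathbb S^2)=0$ extension step, or else the same projection device.

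\textbf{A minor slip.} $\int_{\Sigma_r}|\nabla_{\tan}h|^2$ is scale-invariant, not of order $r$. With the exponents you state, the right side would scale like $r^{3/2}$. With the correct exponents $0$ and $2$, both sides scale like $r$, so your reduction to $r=1$ stands. Your treatment of the constant-trace case matches the paper.
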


\begin{proof}
By scaling, it suffices to consider \(r=1\). Let
\[
v\in H^1(B_1^+;\mathbb R^3)
\]
be the componentwise harmonic extension of \(h\). The trace estimate for the
Dirichlet problem gives
\[
\int_{B_1^+}|\nabla v|^2\,\mathrm{d} x
\leq
C\lvert h\rvert_{\dot H^{\frac{1}{2}}(\Sigma_1)}^2.
\]
Interpolation on the compact Lipschitz surface \(\Sigma_1\) yields
\[
\lvert h\rvert_{\dot H^{\frac{1}{2}}(\Sigma_1)}^2
\leq
C
\| h-\xi\|_{L^2(\Sigma_1)}
\|\nabla_{\tan}h\|_{L^2(\Sigma_1)}
\]
for every constant \(\xi\in\mathbb R^3\). Hence
\begin{equation}
\int_{B_1^+}|\nabla v|^2\,\mathrm{d} x
\leq
C
\| h-\xi\|_{L^2(\Sigma_1)}
\|\nabla_{\tan}h\|_{L^2(\Sigma_1)}.
\label{eq:harmonic-filling-bound}
\end{equation}

It remains to restore the sphere constraint without changing the trace. For
\(a\in B_{\frac{1}{2}}(0)\), define
\[
\pi_a(y):=\frac{y-a}{| y-a|},
\quad
\phi_a:=\pi_a|_{\mathbb S^2}.
\]
The map \(\phi_a:\mathbb S^2\to\mathbb S^2\) is a diffeomorphism, and
\[
\sup_{| a|\leq\frac{1}{2}}
\left(
\| D\phi_a\|_{L^\infty(\mathbb S^2)}
+
\| D\phi_a^{-1}\|_{L^\infty(\mathbb S^2)}
\right)
\leq C.
\]
Moreover,
\[
| D\pi_a(y)|\leq\frac{C}{| y-a|}.
\]
Therefore, by Fubini's theorem,
\[
\begin{aligned}
\int_{B_{\frac{1}{2}}(0)}
\int_{B_1^+}| D(\pi_a\circ v)|^2\,\mathrm{d} x\,\mathrm{d} a
&\leq
C\int_{B_1^+}|\nabla v(x)|^2
\left(
\int_{B_{\frac{1}{2}}(0)}
\frac{\mathrm{d} a}{| v(x)-a|^2}
\right)\,\mathrm{d} x\\
&\leq
C\int_{B_1^+}|\nabla v|^2\,\mathrm{d} x.
\end{aligned}
\]
Consequently, for some \(a\in B_{\frac{1}{2}}(0)\),
\[
\int_{B_1^+}| D(\pi_a\circ v)|^2\,\mathrm{d} x
\leq
C\int_{B_1^+}|\nabla v|^2\,\mathrm{d} x.
\]

Define
\[
H:=\phi_a^{-1}\circ\pi_a\circ v.
\]
Then \(H\in H^1(B_1^+;\mathbb S^2)\). Since \(v=h\in\mathbb S^2\) on
\(\Sigma_1\),
\[
\operatorname{Tr}_{\Sigma_1}H
=
\phi_a^{-1}\circ\pi_a(h)
=
\phi_a^{-1}\circ\phi_a(h)
=
h.
\]
The uniform Lipschitz bound for \(\phi_a^{-1}\) gives
\[
\int_{B_1^+}|\nabla H|^2\,\mathrm{d} x
\leq
C\int_{B_1^+}|\nabla v|^2\,\mathrm{d} x.
\]
Combining this estimate with \eqref{eq:harmonic-filling-bound} proves
\eqref{eq:halfball-filling} for \(r=1\). Rescaling gives the result for
arbitrary \(r>0\).

Finally, suppose that \(h=q\) on \(\Gamma_r\). Taking \(\xi=q\), we have
\[
\nabla_{\tan}h=0
\quad\text{on }\Gamma_r
\]
and, since \(h,q\in\mathbb S^2\),
\[
\int_{\Sigma_r}| h-q|^2\,\mathrm{d} S
=
\int_{S_r^+}| h-q|^2\,\mathrm{d} S
\leq
4\lvert S_r^+\rvert
\leq Cr^2.
\]
Substitution into \eqref{eq:halfball-filling} gives
\eqref{eq:constant-trace-filling}.
\end{proof}

\subsection{Strong compactness up to the flat face}

\begin{proposition}[Boundary strong compactness]\label{prop:compactness}
Under the hypotheses of Lemma~\ref{lem:package}, after a subsequence there is $u_0\in H^1(B_2^+;\Sph^2)$ such that, for any $s<2$,
\begin{equation}
u_j\to u_0\quad\text{strongly in }H^1(B_s^+),\quad \Tr_{\Gamma_s}u_0=q,
\end{equation}
and $u_0$ is locally minimizing for $W_0$. Consequently, if $Q_j(x,z,P)$ is any quadratic expression in $P$ whose coefficients converge uniformly on compact sets, then
\begin{equation}
Q_j(x,u_j,\nabla u_j)\to Q_0(x,u_0,\nabla u_0)\quad\text{in }L^1(B_s^+).
\end{equation}
\end{proposition}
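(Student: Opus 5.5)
The plan is the standard Luckhaus compactness argument, carried out relative to the flat face. Lemma~\ref{lem:luckhaus} lets the splice preserve the Dirichlet trace on \(\Gamma\).

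\emph{Step 1: weak limit and trace.} From the uniform \(H^1\) bound and \(|u_j|=1\), extract a subsequence with \(u_j\rightharpoonup u_0\) weakly in \(H^1(B_2^+)\), strongly in \(L^2\), and a.e. Then \(|u_0|=1\) a.e. Continuity of the trace operator \(H^1(B_2^+)\to L^2(\Gamma_2)\) (compact into \(L^2\)), together with \(g_j\to q\), gives \(\Tr_{\Gamma_2}u_0=q\).

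\emph{Step 2: slice selection.} Fix \(s<2\) and an admissible competitor \(v\) for \(u_0\) on \(B_s^+\), with \(v=u_0\) near \(S_s^+\). Combining Fatou with Fubini, choose a radius \(\sigma\in(s,s')\), with \(s'<2\), such that, along a further subsequence,
\[
\int_{S_\sigma^+}\bigl(|\nabla_{\tan}u_j|^2+|\nabla_{\tan}u_0|^2\bigr)\leq K,
\qquad
\int_{S_\sigma^+}|u_j-u_0|^2\to0 .
\]
Extend \(v\) by \(u_0\) on \(B_\sigma^+\setminus B_s^+\). Define a competitor \(w_j\) equal to the rescaled \(v(\cdot/(1-\delta))\) on \(B_{(1-\delta)\sigma}^+\), and equal to the Luckhaus interpolation \(z_j\) on the shell \(B_\sigma^+\setminus B^+_{(1-\delta)\sigma}\). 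The interpolation \(z_j\) joins \(u_j\) to \(u_0\) and uses the ``unequal flat traces'' version of Lemma~\ref{lem:luckhaus}, since \(u_j\) has trace \(g_j\neq q\).

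The flat trace of \(w_j\) must equal \(g_j\), whereas \(v\) has trace \(q\). I would absorb this mismatch by precomposing with a small correction: replace \(v\) by \(R_j v\), where \(R_j(x)\in SO(3)\) is a smooth rotation field with \(R_j q=g_j\) on \(\Gamma\) and \(R_j\to I\) in \(C^1\). This costs \(o(1)\) energy. The shell energy is bounded by
\[
C\delta K+C\delta^{-1}o(1)+C\delta^{-1}\|g_j-q\|^2+C\delta\|\nabla g_j\|^2 .
\]

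\emph{Step 3: minimality.} Minimality of \(u_j\), the uniform convergence \(A_j\to A_0\), dominated convergence, and weak lower semicontinuity (from convexity of \(W_0\) in \(P\), with \(u_j\to u_0\) a.e.) give
\[
E_{W_0}(u_0;B_\sigma^+)\leq\liminf_j E_{W_j}(u_j;B_\sigma^+)\leq E_{W_0}(v;B_\sigma^+)+C\delta K .
\]
Letting \(\delta\to0\) yields local minimality of \(u_0\).

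\emph{Step 4: strong convergence.} Take \(v=u_0\) itself; then \(\lim_j E_{W_j}(u_j;B_\sigma^+)=E_{W_0}(u_0;B_\sigma^+)\). Uniform strict convexity gives
\[
\lambda\int_{B_\sigma^+}|\nabla u_j-\nabla u_0|^2
\leq
E_{W_j}(u_j)-E_{W_j}(u_0)-\int A_j(x,u_j)\nabla u_0\cdot(\nabla u_j-\nabla u_0).
\]
The first difference tends to zero by the energy convergence just established, since \(E_{W_j}(u_0)\to E_{W_0}(u_0)\) by dominated convergence. The cross term tends to zero because \(A_j(x,u_j)\nabla u_0\to A_0(u_0)\nabla u_0\) strongly in \(L^2\) (dominated convergence) while \(\nabla u_j-\nabla u_0\rightharpoonup0\). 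Hence \(\nabla u_j\to\nabla u_0\) in \(L^2(B_s^+)\).

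\emph{Step 5: quadratic expressions.} The final claim on \(Q_j\) follows because \(|\nabla u_j|^2\) is then uniformly integrable and the coefficients converge a.e. along \(u_j\to u_0\). Write
\[
Q_j(x,u_j,\nabla u_j)-Q_0(x,u_0,\nabla u_0)
\]
as a coefficient difference times \(\nabla u_j\otimes\nabla u_j\), plus a bounded coefficient times a difference of products. Vitali's theorem handles the first term and strong \(L^2\) convergence handles the second.

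The main obstacle is Step 2: keeping the competitor admissible on the flat face while the traces differ (\(g_j\) versus \(q\)). The rotation-field correction, or equivalently the extra trace terms in Lemma~\ref{lem:luckhaus}, must produce an error that vanishes. This holds because \(g_j\to q\) in \(C^2\) makes \(\delta^{-1}\|g_j-q\|^2\to0\) for each fixed \(\delta\).
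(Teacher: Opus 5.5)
Your proposal is correct and follows essentially the paper's route. Both arguments take a weak limit with trace $q$, pick a good slice by Fubini and Fatou, use the rotation field $R_j$ with $R_jq=g_j$ to fix the competitor's flat trace, and splice with the relative Luckhaus lemma; minimality plus lower semicontinuity then give minimality of $u_0$, the choice $v=u_0$ gives energy convergence, and uniform convexity gives strong convergence. The differences are technical: you fix the shell thickness $\delta$ and send $\delta\to0$ after $j\to\infty$ (the paper uses $\delta_j=\max\{d_j^{1/2},\|g_j-q\|_{C^1},j^{-1}\}\to0$), and you choose the radius $\sigma$ afresh for each $s$ along further subsequences, which needs the standard sub-subsequence remark (valid since $u_0$ is already fixed) where the paper makes a diagonal selection of radii $s_m\uparrow2$.
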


\begin{proof}
\emph{Weak compactness.}
After passing to a subsequence,
\[
u_j\rightharpoonup u_0 \quad\text{in }H^1(B_2^+),
\quad
u_j\to u_0 \quad\text{in }L^2(B_2^+)\text{ and a.e.}
\]
The trace theorem and \(g_j\to q\) in \(C^2\) imply
\[
\operatorname{Tr}_{\Gamma_2}u_0=q.
\]
Choose the subsequence so that
\[
\sum_j\| u_j-u_0\|_{L^2(B_2^+)}^2<+\infty.
\]
Fubini's theorem gives the first limit below for almost every \(s<2\), while
Fatou's lemma gives only a finite \(\liminf\), rather than a uniform bound, for
the surface energies. A standard nested subsequence selection therefore
provides radii \(s_m\uparrow2\) and one diagonal subsequence such that, for any
fixed \(m\),
\begin{equation}
\int_{S_{s_m}^+}| u_j-u_0|^2\,\mathrm{d} S\to0,
\quad
\sup_j\int_{S_{s_m}^+}
\bigl(|\nabla_{\tan}u_j|^2+|\nabla_{\tan}u_0|^2\bigr)\,\mathrm{d} S<+\infty.
\label{eq:good-radius-traces}
\end{equation}
Indeed, set \(s_0=0\). At the \(m\)-th stage choose a good radius in
\((\max\{s_{m-1},2-m^{-1}\},2)\), pass to a subsequence realizing the finite
\(\liminf\), and then take the diagonal subsequence.

\medskip
\noindent
\emph{Passage of minimality to the limit.}
Fix one of the radii \(s=s_m\) satisfying
\eqref{eq:good-radius-traces}. Let
\(v\in H^1(B_s^+;\mathbb S^2)\) be an admissible competitor for \(u_0\), with
\[
v=u_0\quad\text{near }S_s^+,
\quad
\operatorname{Tr}_{\Gamma_s}v=q.
\]
Since \(g_j\to q\) in \(C^2\), there exist maps
\[
R_j:\Gamma_s\to SO(3),
\quad
R_j(x')q=g_j(x'),
\quad
R_j\to I\quad\text{in }C^1.
\]
Extend \(R_j\) normally and define
\[
v_j^*(x',x_3):=R_j(x')v(x',x_3).
\]
Then
\[
\operatorname{Tr}_{\Gamma_s}v_j^*=g_j,
\quad
v_j^*\to v\quad\text{strongly in }H^1(B_s^+).
\]

On \(S_s^+\), \(v=u_0\). Set
\[
d_j:=\int_{S_s^+}| u_j-v_j^*|^2\,\mathrm{d} S,
\quad
\delta_j:=
\max\left\{
d_j^{\frac{1}{2}},\,
\| g_j-q\|_{C^1},\,
j^{-1}
\right\}.
\]
Then
\[
d_j\to0,\quad \delta_j\to0,
\quad
\delta_j^{-1}d_j\leq d_j^{\frac{1}{2}}\to0.
\]

Apply the relative Luckhaus lemma in the half-annulus
\[
A_j^+:=B_s^+\backslash B_{s(1-\delta_j)}^+.
\]
It produces an admissible map \(\widetilde v_j\) satisfying
\[
\widetilde v_j=v_j^*
    \quad\text{in }B_{s(1-\delta_j)}^+,
\quad
\widetilde v_j=u_j
    \quad\text{near }S_s^+,
\quad
\operatorname{Tr}_{\Gamma_s}\widetilde v_j=g_j,
\]
and
\[
E_{W_j}(\widetilde v_j;A_j^+)
\leq
C\delta_j+C\delta_j^{-1}d_j+o(1)
=o(1).
\]
Here \(R_j\) is used only to correct the trace of the competitor; neither
\(u_j\) nor \(W_j\) is transformed.

Local minimality of \(u_j\) gives
\[
E_{W_j}(u_j;B_s^+)
\leq
E_{W_j}(\widetilde v_j;B_s^+).
\]
Consequently,
\[
\begin{aligned}
E_{W_0}(u_0;B_s^+)
&\leq
\liminf_{j\to+\infty}E_{W_j}(u_j;B_s^+)\\
&\leq
\limsup_{j\to+\infty}E_{W_j}(u_j;B_s^+)\\
&\leq
E_{W_0}(v;B_s^+).
\end{aligned}
\]
Thus \(u_0\) locally minimizes \(E_{W_0}\).

Taking \(v=u_0\) in the same construction yields
\[
E_{W_j}(u_j;B_s^+)
\to
E_{W_0}(u_0;B_s^+)
\]
for each selected radius \(s=s_m\). Since \(s_m\uparrow2\), these identities
give local minimality and energy convergence on any prescribed inner
half-ball. The preceding diagonal selection fixes a single subsequence.

\medskip
\noindent
\emph{Strong convergence of the gradients.}
Write
\[
W_j(x,z,P)=\frac{1}{2}A_j(x,z)[P,P].
\]
Uniform ellipticity gives
\[
\lambda\int_{B_s^+}|\nabla u_j-\nabla u_0|^2\,\mathrm{d} x
\leq
\int_{B_s^+}
A_j(x,u_j)
[\nabla u_j-\nabla u_0,\nabla u_j-\nabla u_0]\,\mathrm{d} x.
\]
Expanding the right-hand side,
\[
\begin{aligned}
&\int_{B_s^+}A_j(x,u_j)[\nabla u_j,\nabla u_j]\,\mathrm{d} x\\
&\quad
-2\int_{B_s^+}A_j(x,u_j)[\nabla u_j,\nabla u_0]\,\mathrm{d} x\\
&\quad
+\int_{B_s^+}A_j(x,u_j)[\nabla u_0,\nabla u_0]\,\mathrm{d} x.
\end{aligned}
\]
The first term converges by energy convergence. Moreover,
\[
A_j(x,u_j)\nabla u_0
\to
A_0(u_0)\nabla u_0
\quad\text{strongly in }L^2(B_s^+),
\]
by coefficient convergence, almost-everywhere convergence of \(u_j\), and
dominated convergence. Together with
\(\nabla u_j\rightharpoonup\nabla u_0\), this gives convergence of the cross
term. The last term converges by dominated convergence. Hence
\[
\int_{B_s^+}|\nabla u_j-\nabla u_0|^2\,\mathrm{d} x\to0,
\]
and therefore
\[
u_j\to u_0
\quad\text{strongly in }H^1(B_s^+).
\]

Finally, boundedness of \(A_j\), polarization, and strong \(L^2\)-convergence
of the gradients imply the asserted convergence of all quadratic energy and
stress terms:
\[
A_j(x,u_j)[\nabla u_j,\nabla u_j]
\to
A_0(u_0)[\nabla u_0,\nabla u_0]
\quad\text{strongly in }L^1(B_s^+).
\]
The relative version of the Luckhaus lemma is essential here because the
joining half-annulus meets the flat boundary \(\Gamma_s\).
\end{proof}

\subsection{Boundary small-energy theory and density bounds}

\begin{lemma}[Boundary reverse H\"older estimate]
\label{lem:reverseholder}
There exist \(p>2\) and \(C<+\infty\), depending only on the common
ellipticity and coefficient bounds, such that every local minimizer
\[
u\in H^1(B_1^+;\mathbb S^2)
\]
with constant trace
\[
u=q
\quad\text{on }\Gamma_1:=B_1\cap\{x_3=0\}
\]
satisfies
\begin{equation}
\left(
\dashint_{B_{\frac{1}{2}}^+}|\nabla u|^p\,\mathrm{d} x
\right)^{\frac{1}{p}}
\leq
C
\left(
\dashint_{B_1^+}|\nabla u|^2\,\mathrm{d} x
\right)^{\frac{1}{2}}.
\label{eq:boundary-reverse-holder}
\end{equation}
After translation and scaling, the same estimate holds on every concentric
pair of admissible half-balls. The constants are uniform for the coefficient
families and smoothly flattened domains appearing in
Lemma~\ref{lem:package}.
\end{lemma}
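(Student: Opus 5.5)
The estimate will follow from a boundary Caccioppoli inequality of reverse-Poincar\'e type, obtained by comparing \(u\) with a competitor built by the filling Lemma~\ref{lem:filling}, together with Gehring's lemma in its boundary form (Giaquinta--Modica). The constant trace is what makes the boundary version work: on half-balls meeting \(\Gamma_1\) we can subtract \(q\) itself, so the boundary Poincar\'e--Sobolev inequality holds for \(u-q\), which vanishes on the flat face.

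\textbf{Step 1: Caccioppoli inequality.} Fix \(B_R(y)\) with \(y\in\overline{B_{1/2}^+}\) and \(R\le 1/4\). Set \(D_R:=B_R(y)\cap\{x_3>0\}\), and let \(\xi\) be the mean of \(u\) on \(D_R\) when \(B_R(y)\) does not meet \(\Gamma_1\), and \(\xi=q\) otherwise. We claim
\[
\int_{D_{R/2}}|\nabla u|^2\le \frac{C}{R^2}\int_{D_R}|u-\xi|^2 .
\]
\begin{itemize}
\item By Fubini, choose \(s\in(R/2,R)\) with \(\int_{\partial B_s(y)\cap\{x_3>0\}}|\nabla_{\tan}u|^2\le CR^{-1}\int_{D_R}|\nabla u|^2\) and \(\int_{\partial B_s(y)\cap\{x_3>0\}}|u-\xi|^2\le CR^{-1}\int_{D_R}|u-\xi|^2\).
\item Fill the trace on \(\partial D_s\) using Lemma~\ref{lem:filling} (the half-ball version, or the full-ball analogue). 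When \(\xi=q\), the flat part of the boundary contributes nothing to either factor.
\item Minimality and \eqref{eq:coercivity} then give
\[
\int_{D_s}|\nabla u|^2\le C\Bigl(\int_{D_R}|\nabla u|^2\Bigr)^{1/2}\Bigl(R^{-2}\int_{D_R}|u-\xi|^2\Bigr)^{1/2}.
\]
\end{itemize}
This is not yet the form we want, because the right-hand side is taken on the larger set \(D_R\). Young's inequality gives \(\int_{D_{R/2}}|\nabla u|^2\le \varepsilon\int_{D_R}|\nabla u|^2+C_\varepsilon R^{-2}\int_{D_R}|u-\xi|^2\). The term \(\varepsilon\int_{D_R}|\nabla u|^2\) with small \(\varepsilon\) is harmless, since Gehring's lemma in the Giaquinta--Modica form allows exactly such a term.

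\textbf{Step 2: Sobolev--Poincar\'e.} Apply the Sobolev--Poincar\'e inequality with exponent \(6/5\):
\[
R^{-2}\int_{D_R}|u-\xi|^2\le C R^{3}\Bigl(\dashint_{D_R}|\nabla u|^{6/5}\Bigr)^{5/3}.
\]
For \(\xi=q\) this uses that \(u-q\) vanishes on the flat part of a half-ball. When \(B_R(y)\) meets \(\Gamma_1\) but \(y\) is off the face, we enlarge to a half-ball centred on \(\Gamma_1\), which is a standard covering adjustment. Dividing by \(|D_R|\sim R^3\) yields
\[
\dashint_{D_{R/2}}|\nabla u|^2\le \varepsilon\dashint_{D_R}|\nabla u|^2+C\Bigl(\dashint_{D_R}|\nabla u|^{6/5}\Bigr)^{5/3}
\]
for all such balls. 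Gehring's lemma, applied to \(f=|\nabla u|^{6/5}\) with exponent \(5/3\) after extending \(f\) by zero to \(x_3<0\), then produces \(p>2\) and \eqref{eq:boundary-reverse-holder}. The constants depend only on \(\lambda\), \(\Lambda\), and the filling constant.

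\textbf{Main obstacle.} The main obstacle is Step 1, because the sphere constraint rules out the linear test function \(\eta^2(u-\xi)\). The competitor must be \(\mathbb S^2\)-valued, which is why Lemma~\ref{lem:filling} is used instead. The delicate point is that the filling bound is only a geometric mean, not a pure \(L^2\) bound on \(u-\xi\), which is why the hole-filling absorption via Young's inequality is needed. For the coefficient families of Lemma~\ref{lem:package} and the flattened domains, the bound depends only on ellipticity bounds and on bi-Lipschitz control of the pullback, so it is uniform. A non-constant smooth trace \(g_j\) adds lower-order terms, which are absorbed by correcting \(\xi\) to \(g_j(y')\).
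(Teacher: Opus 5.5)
Your proposal is correct and follows essentially the same route as the paper. Both arguments use the filling comparison from Lemma~\ref{lem:filling}, with \(\xi=q\) on half-balls centred on the flat face and the full-ball analogue on interior balls. Both then use Young's inequality to produce an absorbable \(\varepsilon\)-term, apply the zero-trace Sobolev--Poincar\'e inequality with exponent \(6/5\), and conclude with Gehring's lemma in the Giaquinta--Modica form. Strictly, Step~1 does not prove the pure Caccioppoli inequality you first state; it proves its \(\varepsilon\)-version. That \(\varepsilon\)-version is exactly the weak reverse H\"older inequality the paper itself passes to Gehring's lemma, so nothing is lost.
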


\begin{proof}
Set
\[
f:=|\nabla u|.
\]
We first establish a weak reverse H\"older inequality near the flat boundary.

\medskip
\noindent
\emph{Zero-trace Sobolev--Poincar\'e inequality.}
Let \(a\in\{x_3=0\}\) and assume that \(B_{2r}^+(a)\) is an admissible
half-ball. Extend \(u-q\) by zero to the lower half of \(B_{2r}(a)\).
The extension belongs to \(W^{1,\frac{6}{5}}(B_{2r}(a))\) and vanishes on a
fixed positive fraction of that ball. The relative Sobolev--Poincar\'e
inequality on \(B_{2r}(a)\), followed by restriction to the upper half,
gives
\begin{equation}
\left(
\int_{B_{2r}^+(a)}| u-q|^2\,\mathrm{d} x
\right)^{\frac{1}{2}}
\leq
C
\left(
\int_{B_{2r}^+(a)}f^{\frac{6}{5}}\,\mathrm{d} x
\right)^{\frac{5}{6}}.
\label{eq:boundary-sobolev-poincare}
\end{equation}

\medskip
\noindent
\emph{Choice of radius.}
By the coarea formula, there is \(s\in(r,2r)\) such that
\[
\int_{S_s^+(a)}|\nabla_{\tan}u|^2\,\mathrm{d} S
\leq
\frac{C}{r}
\int_{B_{2r}^+(a)}f^2\,\mathrm{d} x
\]
and
\[
\int_{S_s^+(a)}| u-q|^2\,\mathrm{d} S
\leq
\frac{C}{r}
\int_{B_{2r}^+(a)}| u-q|^2\,\mathrm{d} x,
\]
where
\[
S_s^+(a):=\partial B_s(a)\cap\mathbb R^3_+.
\]

\medskip
\noindent
\emph{Filling comparison.}
Apply Lemma~\ref{lem:filling} to the trace of \(u\) on
\(\partial B_s^+(a)\), taking \(\xi=q\). This gives
\(H\in H^1(B_s^+(a);\mathbb S^2)\) such that
\[
H=u\quad\text{on }\partial B_s^+(a)
\]
and
\[
\int_{B_s^+(a)}|\nabla H|^2\,\mathrm{d} x
\leq
C
\left(
\int_{S_s^+(a)}|\nabla_{\tan}u|^2\,\mathrm{d} S
\right)^{\frac{1}{2}}
\left(
\int_{S_s^+(a)}| u-q|^2\,\mathrm{d} S
\right)^{\frac{1}{2}}.
\]
Minimality of \(u\) and uniform ellipticity imply
\[
\begin{aligned}
\int_{B_r^+(a)}f^2\,\mathrm{d} x
&\leq
\int_{B_s^+(a)}f^2\,\mathrm{d} x                                      \\
&\leq
C\int_{B_s^+(a)}|\nabla H|^2\,\mathrm{d} x                             \\
&\leq
\frac{C}{r}
\left(
\int_{B_{2r}^+(a)}f^2\,\mathrm{d} x
\right)^{\frac{1}{2}}
\left(
\int_{B_{2r}^+(a)}| u-q|^2\,\mathrm{d} x
\right)^{\frac{1}{2}}.
\end{aligned}
\]
Using \eqref{eq:boundary-sobolev-poincare} and Young's inequality, we obtain,
for every \(\eta>0\),
\[
\int_{B_r^+(a)}f^2\,\mathrm{d} x
\leq
\eta
\int_{B_{2r}^+(a)}f^2\,\mathrm{d} x
+
\frac{C_\eta}{r^2}
\left(
\int_{B_{2r}^+(a)}f^{\frac{6}{5}}\,\mathrm{d} x
\right)^{\frac{5}{3}}.
\]
Dividing by \(\lvert B_r^+(a)\rvert\) gives
\begin{equation}
\dashint_{B_r^+(a)}f^2\,\mathrm{d} x
\leq
C\eta
\dashint_{B_{2r}^+(a)}f^2\,\mathrm{d} x
+
C_\eta
\left(
\dashint_{B_{2r}^+(a)}f^{\frac{6}{5}}\,\mathrm{d} x
\right)^{\frac{5}{3}}.
\label{eq:weak-boundary-reverse-holder}
\end{equation}

\medskip
\noindent
\emph{Higher integrability.}
Choose \(\eta>0\) so that the first coefficient in
\eqref{eq:weak-boundary-reverse-holder} is below the absorption threshold.
The weak reverse H\"older lemma and Gehring's self-improvement then yield
some \(p>2\) for which
\[
\left(
\dashint_{B_{\frac{r}{2}}^+(a)}f^p\,\mathrm{d} x
\right)^{\frac{1}{p}}
\leq
C
\left(
\dashint_{B_r^+(a)}f^2\,\mathrm{d} x
\right)^{\frac{1}{2}}.
\]
For balls contained in the open half-space, the same argument uses the
corresponding full-ball filling estimate. A finite covering of \(B_{\frac{1}{2}}^+\)
by boundary half-balls and interior balls proves
\eqref{eq:boundary-reverse-holder}.

All constants used above depend only on the uniform ellipticity, coefficient,
and bi-Lipschitz flattening bounds. Hence the estimate is uniform for the
families in Lemma~\ref{lem:package}. This is the boundary analogue of the
higher-integrability argument of Hardt, Kinderlehrer, and
Lin~\cite[Theorem~4.1]{HKL1988}.
\end{proof}
\begin{proposition}[Boundary energy improvement]\label{prop:improvement}
There are $\varepsilon_0>0$, $\theta\in(0,\frac{1}{4})$, and $C<+\infty$ with the following property. Let $W(z,P)=\frac{1}{2}A^{ab}_{ik}(z)P_i^aP_k^b$ have the ellipticity and $C^2$ bounds used above, and let $u$ minimize $E_W$ in $B_1^+$ with trace $q$ on $\Gamma_1$. If
\begin{equation*}
e(u,1):=\int_{B_1^+}|\nabla u|^2\leq\varepsilon_0,
\end{equation*}
then
\begin{equation}
\theta^{-1}\int_{B_\theta^+}|\nabla u|^2\leq\frac{1}{2}\int_{B_1^+}|\nabla u|^2.
\label{eq:boundary-energy-improvement}
\end{equation}
After scaling, the analogous assertion holds in $B_r^+$. More generally,
after flattening a $C^2$ boundary at scale $r$, suppose that
\[
\omega(r)
:=
\| A_r-A_0\|_{C^1}
+\| g_r-q\|_{C^2}
\leq Cr.
\]
Then the right-hand side of \eqref{eq:boundary-energy-improvement} acquires
the additional term $C\omega(r)^2$. Sufficiently small normalized energy
therefore gives H\"older regularity in $B_{\frac{1}{2}}^+\cup\Gamma_{\frac{1}{2}}$. If the
coefficients, boundary, and boundary data are $C^\infty$, the map is
$C^\infty$ there; if the coefficients are analytic, it is analytic on the
interior regular set.
\end{proposition}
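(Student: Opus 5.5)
I will prove \eqref{eq:boundary-energy-improvement} by contradiction and compactness, along the lines already sketched in the proof of Lemma~\ref{lem:package}\ref{Energy decay and clearing-out}.

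\emph{Linearized decay.} Fix \(\theta\in(0,\frac14)\), to be chosen below. For a constant-coefficient system of Legendre--Hadamard type on \(B_1^+\) with zero trace on \(\Gamma_1\), boundary Campanato estimates give
\[
\rho^{-1}\int_{B_\rho^+}|\nabla w|^2\le C_1\rho^2\int_{B_{3/4}^+}|\nabla w|^2
\]
for \(\rho\le\frac14\). I choose \(\theta\) with \(C_1\theta^2\le\frac14\).

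\emph{Contradiction setup.} Suppose there are densities \(W_j\) with uniform bounds and minimizers \(u_j\) with traces \(q_j\) such that
\[
\epsilon_j^2:=\int_{B_1^+}|\nabla u_j|^2\to0
\quad\text{but}\quad
\theta^{-1}\int_{B_\theta^+}|\nabla u_j|^2>\frac12\,\epsilon_j^2 .
\]
Rotate so that \(q_j=q\), and pass to a subsequence with \(A_j\to A_0\) in \(C^1\) (Arzel\`a--Ascoli, using the \(C^2\) bounds). Set \(w_j=(u_j-q)/\epsilon_j\). Then \(\|\nabla w_j\|_{L^2}=1\) and \(w_j=0\) on \(\Gamma_1\). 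By the zero-trace Poincar\'e inequality, \(w_j\) is bounded in \(H^1\), so \(w_j\rightharpoonup w\). Since \(|u_j|=1\), we have \(2q\cdot(u_j-q)=-|u_j-q|^2\), hence \(q\cdot w_j=-\epsilon_j|w_j|^2/2\to0\) in \(L^1\). Thus \(w\) takes values in \(T_q\mathbb S^2\).

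\emph{Euler--Lagrange equation of the limit.} The Euler--Lagrange system of \(u_j\) is
\[
\operatorname{div}\bigl(A_j(u_j)\nabla u_j\bigr)-\tfrac12\partial_zA_j[\nabla u_j,\nabla u_j]=\text{(Lagrange multiplier)}\,u_j .
\]
I test with tangential fields \(\phi\) with \(\phi\cdot q=0\), compactly supported in \(B_1^+\cup\Gamma_1\) and vanishing on \(\Gamma_1\) (equivalently, I use the variation \(\Pi(u_j+t\phi)\)). After dividing by \(\epsilon_j\), the quadratic terms are \(O(\epsilon_j)\) in \(L^1\). The multiplier term is \(O(\epsilon_j)\) as well, because \(u_j\cdot\phi=(u_j-q)\cdot\phi\). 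In the limit, \(w\) solves \(\operatorname{div}(A_0(q)\nabla w)=0\) in the projected sense on \(T_q\mathbb S^2\), with \(w=0\) on \(\Gamma_1\). The projected system is Legendre elliptic because \(A_0\) is uniformly elliptic. Hence the linear decay applies to \(w\).

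\emph{Main obstacle: strong \(L^2\) convergence of \(\nabla w_j\) on \(B_{1/2}^+\).} Without it, the reversed inequality does not pass to the limit. I would use a Caccioppoli-type comparison. Pick a good radius \(s\in(\frac12,\frac34)\) by Fubini, as in Proposition~\ref{prop:compactness}. Build a competitor
\[
v_j=\Pi\bigl(q+\epsilon_j(w+\eta(w_j-w))\bigr),
\]
with \(\Pi\) the nearest-point projection, and join it to \(u_j\) in a thin half-annulus using the relative Luckhaus Lemma~\ref{lem:luckhaus}, so that the trace \(q\) on \(\Gamma\) is preserved. Minimality of \(u_j\) together with a Taylor expansion of \(W_j\) at \(z=q\) gives
\[
\limsup_j \epsilon_j^{-2}E_{W_j}(u_j;B_s^+)\le E_{A_0(q)}(w;B_s^+).
\]
Combined with weak lower semicontinuity, this gives equality. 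Uniform convexity then yields \(\nabla w_j\to\nabla w\) in \(L^2(B_s^+)\), exactly as in the last step of Proposition~\ref{prop:compactness}. The reverse H\"older estimate of Lemma~\ref{lem:reverseholder} controls the region where \(\epsilon_j|w_j|\) is not small, so the projection \(\Pi\) introduces only \(o(\epsilon_j^2)\) error.

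\emph{Conclusion.} Passing to the limit,
\[
\tfrac12\le\theta^{-1}\int_{B_\theta^+}|\nabla w|^2\le C_1\theta^2\int_{B_{3/4}^+}|\nabla w|^2\le\tfrac14,
\]
which is a contradiction.

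\emph{Perturbative version.} With varying coefficients and trace, I subtract a smooth extension of \(g_r\), rotated to \(q\) as in Proposition~\ref{prop:compactness}. The same contradiction argument, now run with normalization \(\epsilon_j^2+\omega_j^2\), produces the extra term \(C\omega(r)^2\).

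\emph{Regularity.} Iterating the decay over dyadic scales gives
\[
\Theta(a,\rho)\le C\rho^{2\beta}\quad\text{at every } a\in\Gamma_{1/2},
\]
and combining this with the interior version (Hardt--Kinderlehrer--Lin) gives a Morrey bound \(\int_{B_\rho(x)\cap B_1^+}|\nabla u|^2\le C\rho^{1+2\beta}\). Morrey's lemma then gives H\"older continuity. Standard Schauder bootstrapping for the Euler system gives \(C^\infty\) regularity. Analyticity in the interior follows from Morrey's analytic regularity theorem for elliptic systems with analytic coefficients.
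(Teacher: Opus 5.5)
Your proposal is correct and follows essentially the same route as the paper. The shared steps are: contradiction with the normalization $w_j=(u_j-q_j)/\epsilon_j$, tangency of the limit, a relative Luckhaus splice at a good radius giving energy convergence and hence strong $H^1$ convergence by uniform convexity, the uniform constant-coefficient boundary decay for $w$, and the normalization $\delta_j^2=e_j+\omega_j^2$ in the perturbative case. The differences are minor: you obtain the limiting linear system by passing to the limit in the Euler--Lagrange equations (which needs only weak convergence) rather than from minimality of the frozen quadratic functional, and your perturbative step is only sketched --- there the limit carries nonzero data (the flat trace $h=\lim_j(g_j-q_j)/\delta_j$ in the paper's unrotated frame, or an equivalent source term after your rotation), so the linear decay estimate must include a term like $\|h\|_{C^{1,\alpha}}^2$, as the paper does.
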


\begin{proof}
We prove the decay by contradiction, including the normalization which is sometimes suppressed in statements of boundary $\varepsilon$-regularity. If it failed, there would be autonomous densities $W_j$ with common structural bounds and minimizers $u_j$ with constant flat traces $q_j$ such that
\[
\epsilon_j^2:=\int_{B_1^+}|\nabla u_j|^2\downarrow0,
\quad
\theta^{-1}\int_{B_\theta^+}|\nabla u_j|^2>\frac{1}{2}\epsilon_j^2.
\]
After a subsequence, $q_j\to q$ and $A_j\to A_0$ in $C^1(\Sph^2)$. The Poincar\'e inequality for functions vanishing on $\Gamma_1$ gives $\| u_j-q_j\|_2\leq C\epsilon_j$. Set
\[
w_j=\frac{u_j-q_j}{\epsilon_j}.
\]
Then $w_j$ is bounded in $H^1$, has zero flat trace, and, after a subsequence, converges weakly in $H^1$ and strongly in $L^2$ to a map $w:B_1^+\to T_q\Sph^2$. Tangency follows from
\[
q_j\cdot w_j=-\frac{\epsilon_j}{2}| w_j|^2.
\]
Lemma~\ref{lem:reverseholder}, applied on compact smaller half-balls, also gives $\|\nabla w_j\|_{L^p}\leq C$ for one $p>2$. Consequently the family $|\nabla w_j|^2$ is equi-integrable. Since $u_j\to q$ in measure, this implies
\begin{equation*}
\int_{B_s^+}| A_j(u_j)-A_0(q)|\,
|\nabla w_j|^2\,\mathrm{d} x\to0
\quad s<1.
\end{equation*}

Divide the minimality inequality by $\epsilon_j^2$. After a further
subsequence, arrange
\[
\sum_j\| w_j-w\|_{L^2(B_1^+)}^2<+\infty.
\]
Fubini's theorem gives spherical $L^2$ convergence for almost every radius,
whereas Fatou's lemma gives a finite $\liminf$ of the spherical tangential
energies. Choose one such $s\in(\frac{3}{4},1)$ and pass to a further subsequence
realizing that $\liminf$. Then
\begin{equation*}
\int_{S_s^+}| w_j-w|^2\to0,
\quad
\sup_j\int_{S_s^+}
\bigl(|\nabla_{\tan}w_j|^2
+|\nabla_{\tan}w|^2\bigr)<+\infty.
\end{equation*}
Let $\varphi\in H^1(B_s^+;T_q\Sph^2)$ be a competitor with zero flat trace and $\varphi=w$ near $S_s^+$. Set
\[
v_j=\frac{q_j+\epsilon_j\varphi_j}
{| q_j+\epsilon_j\varphi_j|},
\]
where $\varphi_j$ is the parallel transport of $\varphi$ from $T_q\Sph^2$ to $T_{q_j}\Sph^2$. On $S_s^+$ put
\[
d_j=\epsilon_j^{-2}\int_{S_s^+}| u_j-v_j|^2.
\]
The preceding trace convergence and the expansion
$\epsilon_j^{-1}(v_j-q_j)=\varphi_j
+O\bigl(\epsilon_j|\varphi_j|^2\bigr)$ gives $d_j\to0$
(use $H^1(S_s^+)\hookrightarrow L^4(S_s^+)$); the tangential energies of
$u_j$ and $v_j$, divided by $\epsilon_j^2$, are uniformly bounded. Apply
Lemma~\ref{lem:luckhaus} in an annulus of relative thickness
$\delta_j=\max\{d_j^{\frac{1}{2}},j^{-1}\}$. Both flat traces equal $q_j$,
and the annular energy divided by $\epsilon_j^2$ is
\[
C(\delta_j+\delta_j^{-1}d_j)=o(1).
\]
Minimality, the coefficient convergence estimate above, and weak lower semicontinuity show that $w$ locally minimizes the frozen quadratic functional
\begin{equation*}
Q_q(\varphi)=\frac{1}{2}\int_{B_1^+}A^{ab}_{0,ik}(q)\partial_i\varphi^a\partial_k\varphi^b
\end{equation*}
in $B_s^+$ among $T_q\Sph^2$-valued maps with its own spherical trace and zero trace on $\Gamma_s$. Smooth compactly supported tangent-valued variations are dense in this affine class, so $w$ satisfies the associated linear Euler system.

Taking $\varphi=w$ in the same splice gives convergence of the normalized energies on any smaller good radius. Ellipticity, exactly as in Proposition~\ref{prop:compactness}, then yields
\begin{equation*}
w_j\to w\quad\text{strongly in }H^1(B_s^+).
\end{equation*}
The Euler equation of the frozen quadratic functional above is a constant-coefficient strongly elliptic system with zero Dirichlet data on the flat face. Iterated boundary $H^m$ estimates followed by Sobolev embedding give $\|\nabla w\|_{L^\infty(B_{\frac{1}{2}}^+)}\leq C\|\nabla w\|_{L^2(B_{\frac{3}{4}}^+)}$, and hence
\begin{equation*}
\rho^{-1}\int_{B_\rho^+}|\nabla w|^2\leq C\rho^2\int_{B_{\frac{3}{4}}^+}|\nabla w|^2,\quad0<\rho<\frac{1}{2}.
\end{equation*}
Choose $\theta$ so that $C\theta^2<\frac{1}{4}$. Passing to the limit in the failed version of \eqref{eq:boundary-energy-improvement} by the strong convergence above gives a left side at most $\frac{1}{4}$, contradicting the lower bound $\frac{1}{2}$. This proves \eqref{eq:boundary-energy-improvement}.

Iteration gives, for some $\alpha>0$,
\[
\int_{B_\rho^+}|\nabla u|^2\leq C\rho^{1+2\alpha}\int_{B_1^+}|\nabla u|^2.
\]
The boundary Morrey lemma makes $u$ H\"older continuous. Its image in a
smaller half-ball lies in one coordinate chart of $\Sph^2$. There it is a
weak solution of a uniformly elliptic system with quadratic natural growth.
The difference-quotient estimate, using the displayed Morrey decay to absorb
the quadratic term, yields baseline boundary regularity under the stated
$C^2$ assumptions. If the coefficients and boundary data are smooth,
iteration gives $C^\infty$ regularity; analytic coefficients give interior
analyticity. This is the bootstrap in the small-energy lemma of Hardt,
Kinderlehrer, and Lin~\cite[Lemma~2.5]{HKL1986}; it uses only the small-energy decay just proved,
not global boundary regularity.

We finally prove the perturbative assertion for a flattened boundary and a
non-constant trace. Decrease \(\theta\) if necessary. We claim that there are
\(\varepsilon_0>0\) and \(C_0<+\infty\) such that
\begin{equation}
\theta^{-1}\int_{B_\theta^+}|\nabla u|^2\,\mathrm{d} x
\leq
\frac{1}{2}\int_{B_1^+}|\nabla u|^2\,\mathrm{d} x
+C_0\omega(r)^2.
\label{eq:perturbed-boundary-improvement}
\end{equation}
Suppose otherwise. There are \(r_j\downarrow0\), flattened minimizers
\(u_j\), coefficient fields \(A_j:=A_{r_j}\), and flat traces
\(g_j:=g_{r_j}\) for which
\[
e_j:=\int_{B_1^+}|\nabla u_j|^2\,\mathrm{d} x,
\quad
\omega_j:=\omega(r_j),
\quad
e_j+\omega_j^2\to0,
\]
while \eqref{eq:perturbed-boundary-improvement} fails. Set
\[
q_j:=g_j(0),
\quad
\delta_j:=\bigl(e_j+\omega_j^2\bigr)^{\frac{1}{2}}.
\]

Because \(g_j\) is \(C^2\)-close to \(q_j\), a smooth local section of
\(SO(3)\to\mathbb S^2\) gives
\[
R_j:\Gamma_1\to SO(3),
\quad
R_j(x')q_j=g_j(x'),
\quad
\| R_j-I\|_{C^2(\Gamma_1)}\leq C\omega_j.
\]
Extend \(R_j\) normally and put \(\widetilde u_j=R_j^{-1}u_j\). Its flat
trace is \(q_j\). Let \(\widehat E_j\) denote the quadratic principal part
obtained from the transformed energy by retaining
\(R_j\nabla\widetilde u_j\) and omitting
\((\nabla R_j)\widetilde u_j\). It has the same ellipticity constants.
Expanding the actual transformed energy and applying Young's inequality shows
that, for every admissible comparison region \(G\) and every fixed
\(\eta>0\),
\begin{equation}
\begin{aligned}
\widehat E_j(\widetilde u_j;G)
\leq{}&\widehat E_j(\widetilde v;G)
+\eta\int_G
\bigl(
|\nabla\widetilde u_j|^2
+|\nabla\widetilde v|^2
\bigr)\,\mathrm{d} x
+C_\eta\omega_j^2\lvert G\rvert.
\end{aligned}
\label{eq:rotated-quasiminimality}
\end{equation}
Fix \(\eta\) below the Caccioppoli absorption threshold; the term containing
the energy of \(\widetilde u_j\) is then absorbed. The filling proof of
Lemma~\ref{lem:reverseholder}, now applied to
\eqref{eq:rotated-quasiminimality}, gives some \(p>2\) and, for every
\(s<1\),
\begin{equation}
\|\nabla\widetilde u_j\|_{L^p(B_s^+)}
\leq
C_s\bigl(e_j^{\frac{1}{2}}+\omega_j\bigr)
\leq C_s\delta_j.
\label{eq:normalized-perturbed-reverse-holder}
\end{equation}
Since
\[
\nabla u_j
=
R_j\nabla\widetilde u_j+(\nabla R_j)\widetilde u_j,
\quad
|\widetilde u_j|=1,
\]
the same estimate holds with \(\nabla u_j\) in place of
\(\nabla\widetilde u_j\).

Define
\[
w_j:=\frac{u_j-q_j}{\delta_j},
\quad
h_j:=\frac{g_j-q_j}{\delta_j}.
\]
The trace-extension and Poincar\'e inequalities yield
\[
\| w_j\|_{H^1(B_1^+)}\leq C,
\quad
\| h_j\|_{C^2(\Gamma_1)}
\leq C\frac{\omega_j}{\delta_j}\leq C.
\]
Fix \(\alpha\in(0,1)\). After a subsequence, the compact embedding
\(C^2\hookrightarrow C^{1,\alpha}\) gives
\[
q_j\to q,
\quad
w_j\rightharpoonup w\ \text{in }H^1,
\quad
w_j\to w\ \text{in }L^2,
\quad
h_j\to h\ \text{in }C^{1,\alpha}.
\]
No affine form of \(h\) is asserted. From
\[
q_j\cdot w_j=-\frac{\delta_j}{2}| w_j|^2
\]
we obtain
\[
w:B_1^+\to T_q\mathbb S^2,
\quad
\operatorname{Tr}_{\Gamma_1}w=h.
\]
Estimate \eqref{eq:normalized-perturbed-reverse-holder} makes
\(|\nabla w_j|^2\) equiintegrable. Hence, for every \(s<1\),
\begin{equation}
\int_{B_s^+}
| A_j(x,u_j)-A_0(q)|
|\nabla w_j|^2\,\mathrm{d} x
\to0.
\label{eq:perturbed-coefficient-freezing}
\end{equation}

Choose a good radius \(s\in(\frac{3}{4},1)\) and a subsequence such that
\[
\int_{S_s^+}| w_j-w|^2\,\mathrm{d} S\to0,
\quad
\sup_j\int_{S_s^+}
\bigl(
|\nabla_{\tan}w_j|^2
+|\nabla_{\tan}w|^2
\bigr)\,\mathrm{d} S<+\infty.
\]
Given a tangent-valued competitor \(\varphi\) with flat trace \(h\) that
agrees with \(w\) near \(S_s^+\), choose trace extensions
\(\varphi_j\to\varphi\) strongly in \(H^1(B_s^+)\), with flat trace \(h_j\),
and with the same convergence on \(S_s^+\). An \(H^1\)-small truncation,
which is the identity on the uniformly bounded flat traces, allows us to
assume
\[
\delta_j\|\varphi_j\|_{L^\infty(B_s^+)}\leq\frac{1}{2}.
\]
Thus the denominator in the following projection is bounded away from zero:
\[
v_j:=
\frac{q_j+\delta_j\varphi_j}
{| q_j+\delta_j\varphi_j|}.
\]
Since \(q_j+\delta_jh_j=g_j\in\mathbb S^2\), the flat trace of \(v_j\) is
exactly \(g_j\). The good-radius convergence and the fact that
\(\varphi=w\) near \(S_s^+\) give
\[
d_j
:=
\delta_j^{-2}\int_{S_s^+}| u_j-v_j|^2\,\mathrm{d} S
\to0,
\]
while the tangential energies of \(u_j\) and \(v_j\), divided by
\(\delta_j^2\), are uniformly bounded. Apply Lemma~\ref{lem:luckhaus} with
relative thickness
\[
\tau_j:=\max\{d_j^{\frac{1}{2}},j^{-1}\}.
\]
The normalized annular cost is bounded by
\[
C\bigl(\tau_j+\tau_j^{-1}d_j\bigr)=o(1).
\]
Minimality and \eqref{eq:perturbed-coefficient-freezing} show that \(w\) is
a local minimizer of
\[
Q_q(\psi)
:=
\frac{1}{2}\int_{B_s^+}
A_{0,ik}^{ab}(q)\partial_i\psi^a\partial_k\psi^b\,\mathrm{d} x
\]
among tangent-valued maps with flat trace \(h\) and its own spherical trace.
Using \(\varphi=w\) in the same recovery construction gives
\[
w_j\to w
\quad\text{strongly in }H^1(B_{s'}^+)
\]
for every \(s'<s\). Thus \(w\) solves the corresponding
constant-coefficient linear system.

Extend \(h\) from \(\Gamma_s\) with controlled \(C^{1,\alpha}\) norm and
let \(\ell\) solve the constant-coefficient boundary problem
\[
\partial_i\bigl(A_{0,ik}^{ab}(q)\partial_k\ell^b\bigr)=0,
\quad
\operatorname{Tr}_{\Gamma_s}\ell=h.
\]
The boundary Schauder estimate gives
\[
\|\nabla\ell\|_{L^\infty(B_{\frac{1}{2}}^+)}
\leq C\| h\|_{C^{1,\alpha}(\Gamma_s)}.
\]
Since \(w-\ell\) has zero flat trace, boundary estimates give
\begin{equation}
\rho^{-1}\int_{B_\rho^+}|\nabla w|^2\,\mathrm{d} x
\leq
C\rho^2
\left(
\int_{B_{\frac{3}{4}}^+}|\nabla w|^2\,\mathrm{d} x
+\| h\|_{C^{1,\alpha}(\Gamma_s)}^2
\right)
\label{eq:general-trace-linear-decay}
\end{equation}
for \(0<\rho<\frac{1}{2}\).

Put
\[
a_j:=\frac{e_j}{\delta_j^2},
\quad
b_j:=\frac{\omega_j^2}{\delta_j^2},
\quad
a_j+b_j=1.
\]
After a subsequence, \(a_j\to a\) and \(b_j\to b\). Strong convergence and
\(\| h_j\|_{C^2}\leq C\sqrt{b_j}\) give
\[
\int_{B_{\frac{3}{4}}^+}|\nabla w|^2\,\mathrm{d} x\leq a,
\quad
\| h\|_{C^{1,\alpha}(\Gamma_s)}^2\leq Cb.
\]
Choose \(\theta\) so that \eqref{eq:general-trace-linear-decay} at
\(\rho=\theta\) is at most \(\frac{1}{4}(a+b)\). Then
\[
\lim_{j\to+\infty}
\frac{1}{\theta\delta_j^2}\int_{B_\theta^+}|\nabla u_j|^2\,\mathrm{d} x
\leq\frac{1}{4}.
\]
The failure of \eqref{eq:perturbed-boundary-improvement}, with
\(C_0\geq\frac{1}{2}\), instead gives the lower bound
\[
\frac{1}{\theta\delta_j^2}\int_{B_\theta^+}|\nabla u_j|^2
>
\frac{1}{2}a_j+C_0b_j
\geq\frac{1}{2},
\]
a contradiction. Thus, \eqref{eq:perturbed-boundary-improvement} holds. Since
\(\omega(r)\leq Cr\), in physical variables, it reads
\[
\Theta_u(a,\theta r)
\leq
\frac{1}{2}\Theta_u(a,r)+Cr^2.
\]
The error is summable on dyadic scales.
\end{proof}

\begin{lemma}[Universal upper density near a flat boundary]\label{lem:upperdensity}
Let $u$ minimize a uniformly elliptic quadratic density in a half-ball and have constant trace on the flat face. For all sufficiently small $r$,
\begin{equation}
r^{-1}\int_{B_r^+}|\nabla u|^2\leq M,
\label{eq:universal-upper-density}
\end{equation}
where $M$ depends only on the structural constants. With smooth boundary and boundary data, $M$ also depends on their local $C^2$ bounds but is uniform along any blow-up sequence.
\end{lemma}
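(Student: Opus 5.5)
The plan is the filling comparison sketched in the proof of Lemma~\ref{lem:package}\ref{Upper density bound}, followed by a dyadic iteration. Fix a center on the flat face (interior centers are handled the same way with the full-ball version of the filling lemma). Set
\[
\Theta(r):=r^{-1}\int_{B_r^+}|\nabla u|^2\,\mathrm{d} x .
\]

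\textbf{Choice of a good radius.} By the coarea formula, the integral of \(\int_{S_s^+}|\nabla_{\tan}u|^2\,\mathrm{d} S\) over \(s\in(\frac r2,r)\) is at most \(\int_{B_r^+}|\nabla u|^2\). Hence there is a radius \(s\in(\frac r2,r)\) with
\[
\int_{S_s^+}|\nabla_{\tan}u|^2\,\mathrm{d} S\leq \frac{2}{r}\int_{B_r^+}|\nabla u|^2=2\Theta(r).
\]

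\textbf{Filling comparison.} The trace of \(u\) equals \(q\) on \(\Gamma_s\). The constant-trace filling estimate \eqref{eq:constant-trace-filling} of Lemma~\ref{lem:filling} then gives a competitor \(H\) with the same trace on \(\partial B_s^+\) and
\[
\int_{B_s^+}|\nabla H|^2\leq Cs\,\Theta(r)^{1/2}.
\]
By minimality and the two-sided bound \eqref{eq:coercivity},
\[
\lambda\int_{B_{r/2}^+}|\nabla u|^2\leq E_W(u;B_s^+)\leq E_W(H;B_s^+)\leq C\Lambda\, r\,\Theta(r)^{1/2}.
\]
This yields the recurrence
\[
\Theta(r/2)\leq C_0\,\Theta(r)^{1/2},
\]
with \(C_0\) depending only on \(\lambda,\Lambda\). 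It is the key point that the error is a square root: this sublinear bound is what breaks the scale invariance.

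\textbf{Iteration.} Fix \(r_0\) admissible, so that \(\Theta(r_0)\leq r_0^{-1}\int|\nabla u|^2<\infty\). Let \(t_k:=\Theta(2^{-k}r_0)\), so that \(t_{k+1}\leq C_0\sqrt{t_k}\).
\begin{itemize}
\item If \(t_k\geq 4C_0^2\), then \(t_{k+1}\leq \frac12 t_k\), so large values decay geometrically.
\item Once \(t_k\leq 4C_0^2\), then \(t_{k+1}\leq 2C_0^2\leq 4C_0^2\), so the bound \(4C_0^2\) persists.
\end{itemize}
Hence there is \(k_0\), depending on \(\Theta(r_0)\) through \(\log_2(\Theta(r_0)/4C_0^2)\), with \(t_k\leq 4C_0^2\) for all \(k\geq k_0\). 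Intermediate radii \(r\in(2^{-k-1}r_0,2^{-k}r_0]\) cost a factor \(2\), which gives \eqref{eq:universal-upper-density} with \(M:=8C_0^2\). This \(M\) depends only on the structural constants. The threshold ``sufficiently small \(r\)'' depends on \(u\), but \(M\) does not.

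\textbf{Curved boundary and nonconstant data.} This is the only real obstacle. I would flatten and use the rotation \(R_j\) of Proposition~\ref{prop:improvement} to reduce to constant trace \(q\) on \(\Gamma_s\). The rotated map is a quasi-minimizer in the sense of \eqref{eq:rotated-quasiminimality}: after absorbing the \(\eta\)-term there is a multiplicative error \(1+C\eta\) and an additive error \(C\omega(r)^2 r^3\). The cleaner route is to fill the rotated trace, rotate back, and compare directly. The extra terms \((\nabla R)u\) and the \(x\)-dependence of the coefficients cost \(O(r)\cdot\) energy plus \(O(r^3)\).

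The recurrence then becomes
\[
\Theta(r/2)\leq C_0\,\Theta(r)^{1/2}+Cr .
\]
The additive \(Cr\) is bounded for small \(r\), so the same two-regime iteration gives a bound \(M\) depending additionally on the local \(C^2\) norms. These norms only improve under blow-up, since \(\omega(r)\leq Cr\), so \(M\) is uniform along blow-up sequences.

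One point needs care: the constant in Lemma~\ref{lem:filling} must be uniform for \(C^1\)-close flattened half-balls. This follows from the bi-Lipschitz invariance already noted after Lemma~\ref{lem:luckhaus}.
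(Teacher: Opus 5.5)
Your proposal is correct and follows the paper's proof essentially step for step: the same coarea choice of a good radius, the same constant-trace filling \eqref{eq:constant-trace-filling} yielding $\Theta(r/2)\le C_0\Theta(r)^{1/2}$, the same two-regime dyadic iteration with thresholds $4C_0^2$ and $2C_0^2$ and a factor $2$ for intermediate radii, and the same perturbed recurrence with an additive $Cr$. The only difference is in handling nonconstant data, and it is minor. You rotate to constant trace, fill, and rotate back; the paper instead inserts the nonconstant flat trace directly into the general filling estimate \eqref{eq:halfball-filling}, whose first factor then picks up at most $Cr^2\|\nabla_{\tan}g\|_{L^\infty}^2$ and whose second factor is at most $Cr^2$, so no rotation is needed.
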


\begin{proof}
Put 
\[
e(r)=r^{-1}\int_{B_r^+}|\nabla u|^2.
\]
Coarea and
averaging give a radius $s\in(\frac{r}{2},r)$ such that
\[
\int_{S_s^+}|\nabla_{\tan}u|^2\leq Ce(r).
\]
Use Lemma~\ref{lem:filling} to fill the trace of $u$ on $\partial B_s^+$ and use this filling as a competitor. Ellipticity gives
\begin{equation*}
e\left(\frac{r}{2}\right)\leq C_0\sqrt{e(r)}.
\end{equation*}
If $e(r)\geq4C_0^2$, the right side is at most $\frac{1}{2}e(r)$; otherwise, the next iterate is at most $2C_0^2$. Iterating on dyadic radii therefore puts $e$ below a universal bound after finitely many steps. If $\frac{r}{2}<s<r$, monotonicity of the un-normalised energy gives $e(s)\leq2e(r)$, so the dyadic estimate implies \eqref{eq:universal-upper-density} at any smaller radius. Smooth flattening changes the ellipticity constants by $1+O(r)$. If the flat trace is smooth $g$ rather than a constant, its contribution to the first factor in \eqref{eq:halfball-filling} is at most $Cr^2\|\nabla_{\tan}g\|_{L^\infty}^2$, while the second factor is at most $Cr^2$. Thus, the precise perturbed recurrence is
\[
e\left(\frac{r}{2}\right)\leq C_0\sqrt{e(r)}+Cr\| g\|_{C^1}+Cr.
\]
The last two terms are summable on dyadic radii, so the same iteration proves the conclusion uniformly after rescaling.
\end{proof}

\subsection{Consequences for the singular set}
\label{subsec:singular-set}

The reverse H\"older argument of Hardt, Kinderlehrer, and
Lin~\cite[Theorem~4.1]{HKL1988}, together with
Lemma~\ref{lem:reverseholder}, gives
\[
\nabla u\in L^p_{\mathrm{loc}}
\quad\text{for some }p>2.
\]
Fix
\[
2<p_0<\min\{p,3\}.
\]
Then \(\nabla u\in L^{p_0}_{\mathrm{loc}}\). We combine this higher
integrability with the clearing-out criterion.

Let \(d\) denote the domain dimension. At every interior singular point \(a\),
the contrapositive of the full-ball interior analogue of
Proposition~\ref{prop:improvement} gives
\[
\int_{B_r(a)}|\nabla u|^2\,\mathrm{d} x
\geq
\varepsilon_* r^{d-2}
\]
for all sufficiently small \(r\). On the other hand, H\"older's inequality
gives
\[
\int_{B_r(a)}|\nabla u|^2\,\mathrm{d} x
\leq
Cr^{d(1-\frac{2}{p_0})}
\left(
\int_{B_r(a)}|\nabla u|^{p_0}\,\mathrm{d} x
\right)^{\frac{2}{p_0}}.
\]
Combining the two estimates yields
\[
r^{d-p_0}
\leq
C\int_{B_r(a)}|\nabla u|^{p_0}\,\mathrm{d} x.
\]

At every singular point the averages of \(|\nabla u|^2\) diverge
along small scales, so the Lebesgue differentiation theorem shows that the
singular set has \(d\)-dimensional measure zero. A Vitali covering by balls
of radii below \(\delta\), followed by summation over a pairwise disjoint
subfamily, gives on every compact \(K\subset\subset\Omega\)
\[
\mathcal H^{d-p_0}_{C\delta}
\bigl(\operatorname{Sing}_{\mathrm{int}}u\cap K\bigr)
\leq
C\int_{N_{C\delta}(\operatorname{Sing}_{\mathrm{int}}u)\cap K'}
|\nabla u|^{p_0}\,\mathrm{d} x,
\]
where \(K'\subset\subset\Omega\) is a fixed neighbourhood of \(K\). The
right-hand side tends to zero by absolute continuity. Letting
\(\delta\downarrow0\) and using the corollary of Hardt, Kinderlehrer, and
Lin~\cite[Corollary~4.2]{HKL1988} gives, in dimension three,
\[
\dim_{\mathcal H}\operatorname{Sing}_{\mathrm{int}}u
\leq3-p_0<1.
\]

We also record the weaker boundary consequence needed later. In a fixed
flattened boundary disk \(D\), set
\[
\nu:=|\nabla u|^2\,\mathrm{d} x,
\quad
E_m
:=
\left\{
a\in D:
\limsup_{r\downarrow0}
\frac{\nu(B_r(a)\cap\Omega)}{r}
>
\frac{1}{m}
\right\}.
\]
For every \(a\in E_m\), one can choose arbitrarily small radii satisfying
\[
r<m\,\nu(B_r(a)\cap\Omega).
\]
A Vitali selection and the disjointness of the selected balls imply, on every
smaller disk \(D'\subset\subset D\),
\[
\mathcal H^1(E_m\cap D')
\leq
Cm\,\nu(U)<+\infty,
\]
where \(U\) is a fixed boundary neighbourhood. Hence
\[
\mathcal H^2\left(\bigcup_{m=1}^{+\infty}E_m\right)=0.
\]
Since \(D'\) has positive two-dimensional measure, it contains a point
\(a\notin\bigcup_mE_m\). At this point,
\[
\lim_{r\downarrow0}
\frac{1}{r}
\int_{B_r(a)\cap\Omega}|\nabla u|^2\,\mathrm{d} x
=0.
\]
Proposition~\ref{prop:improvement} makes \(a\) regular, and openness of the
regular set provides a nonempty relatively open smooth boundary patch.
Thus this argument proves the boundary nonemptiness assertion in
Lemma~\ref{lem:package} without assuming the full boundary regularity theorem.

\subsection{Oseen--Frank blow-up at a boundary singularity}

We now apply the compactness package to the original Oseen--Frank
functional. The null-Lagrangian reduction and the structural properties of
the coercive representative required in this step were established in
Subsection~\ref{subsec:of-variational-problem}, so they are not repeated here.

\begin{proposition}[Tangent generated by a boundary singularity]\label{prop:tangent}
Let $n$ be an Oseen--Frank minimizer in a smooth bounded three-dimensional domain with smooth strong anchoring. If $a\in\partial\Omega$ is singular, then there are an autonomous analytic density $W_0$ satisfying \eqref{eq:coercivity}, a point $q\in\Sph^2$, and a class-A minimizer $U:\R^3_+\to\Sph^2$ with trace $q$ such that
\begin{equation}
0<cR\leq E_{W_0}(U;B_R^+)\leq CR\quad\text{for any }R>0.
\label{eq:tangent-linear-growth}
\end{equation}
\end{proposition}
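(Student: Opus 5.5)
The plan is to blow up at $a$ along a sequence $r_j\downarrow0$ and to extract the limit using Lemma~\ref{lem:package}.

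\emph{Rescaling.} After a rigid motion, take $a=0$ and let $F$ be a smooth boundary-flattening diffeomorphism with $F(0)=0$ and $DF(0)=I$. Define
\[
u_j(y):=n\bigl(F^{-1}(r_jy)\bigr)
\]
on $B_{2/r_j}^+$. Each $u_j$ locally minimizes the density $W_j(y,z,P)=J_j(y)W_\alpha(z,PB_j(y))$ introduced above. These densities have uniform ellipticity and uniform $C^2$ bounds, and they converge in $C^1_{\mathrm{loc}}$ to the autonomous density $W_0:=W_\alpha$. By construction $W_\alpha$ is analytic in $z$ and satisfies \eqref{eq:coercivity}. The flat traces are $g_j(y')=g(F^{-1}(r_jy',0))$, and they converge in $C^2_{\mathrm{loc}}$ to $q:=g(a)$, because $\|\nabla^k g_j\|\le Cr_j^k$.

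\emph{Upper bound and compactness.} By Lemma~\ref{lem:upperdensity} (equivalently, Lemma~\ref{lem:package}\ref{Upper density bound}), $E_W(n;B_r(a)\cap\Omega)\le Mr$ for all small $r$. Rescaling gives $E_{W_j}(u_j;B_R^+)\le C MR$ whenever $Rr_j$ is small, with $M$ uniform in $j$. In particular $\int_{B_R^+}|\nabla u_j|^2\le CR$ for every fixed $R$. Apply Proposition~\ref{prop:compactness} on $B_{2R}^+$ for $R=1,2,3,\dots$ and take a diagonal subsequence. This produces $U\in H^1_{\mathrm{loc}}(\overline{\R^3_+};\Sph^2)$ with $u_j\to U$ strongly in $H^1(B_R^+)$ for every $R$ and $\operatorname{Tr}U=q$ on $\partial\R^3_+$. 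The map $U$ locally minimizes $E_{W_0}$ on every half-ball, and hence is a class-A minimizer. Strong convergence passes the upper bound to the limit: $E_{W_0}(U;B_R^+)=\lim_j E_{W_j}(u_j;B_R^+)\le CR$.

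\emph{Lower bound, the main step.} Since $a$ is singular, the clearing-out alternative in Lemma~\ref{lem:package}\ref{Energy decay and clearing-out} applies. I will use it in its perturbed form: by Proposition~\ref{prop:improvement}, the inequality
\[
\Theta_n(a,\theta r)\le\tfrac12\Theta_n(a,r)+Cr^2
\]
holds whenever $\Theta_n(a,r)<\varepsilon_0$. If $\Theta_n(a,r_0)$ were below a threshold $\varepsilon_*$ at one small $r_0$, this would iterate to Morrey decay and hence to regularity at $a$. Since $a$ is singular, $\Theta_n(a,r)\ge\varepsilon_*$ for all sufficiently small $r$. The distortion of balls under $F$ is $1+O(r)$ and can be absorbed here.

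Rescaling this gives $E_{W_j}(u_j;B_R^+)\ge c\varepsilon_*R$ for each fixed $R$ once $j$ is large, because $Rr_j\to0$. Strong $H^1$ convergence, together with the $L^1$ convergence of quadratic expressions in Proposition~\ref{prop:compactness}, carries this bound to $U$: $E_{W_0}(U;B_R^+)\ge cR$ for every $R>0$. This already shows that $U$ is nonconstant.

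The delicate points are the following. First, the strong convergence must hold up to the flat face, since otherwise energy concentrating near $\Gamma$ could be lost in the limit; this is exactly why the relative Luckhaus lemma is needed. Second, the lower bound must be stated for \emph{all} $R$ and not only for $R\le1$. This works because the clearing-out bound holds at every small physical scale $Rr_j$, not just at $r_j$.
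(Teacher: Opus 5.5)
Your proposal is correct and follows the paper's proof essentially step for step. You flatten and rescale at $a$, bound the rescaled energies from above by the uniform upper density estimate and from below by the clearing-out bound at the physical scales $Rr_j\to0$, and pass to the limit using boundary strong compactness with a diagonal subsequence. The only difference is cosmetic: the paper checks class-A minimality of $U$ by splicing a competitor $V$ into $u_j$ with the relative Luckhaus lemma, whereas you read it off from the local-minimality conclusion of Proposition~\ref{prop:compactness} on each $B_{2R}^+$, and that conclusion rests on the same splice.
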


\begin{proof}
Translate and rotate so that $a=0$ and the inward normal is $e_3$. Let $F$ be a smooth boundary-flattening diffeomorphism with $F(0)=0$ and $DF(0)=I$. For $r_j\downarrow0$ set
\[
\Omega_j=r_j^{-1}F^{-1}(\Omega),\quad u_j(y)=n(F(r_jy)).
\]
After change of variables, $u_j$ minimizes a density

\[
W_j(y,z,P)=\frac{1}{2}A_j(y,z)[P,P]
\]

on $\Omega_j$. The matrices $A_j$ have common ellipticity and smoothness
bounds on compact sets and converge in $C^1_{\mathrm{loc}}$ to the
autonomous matrix frozen at the origin. By the structural properties
established in Subsection~\ref{subsec:of-variational-problem}, the limiting
density is analytic and uniformly strictly convex for any positive Frank
triple. The rescaled traces converge in $C^2_{\mathrm{loc}}$ to $q=g(0)$.

Lemma~\ref{lem:upperdensity}, including its flattening and trace errors, gives $C,r_0>0$ such that
\[
\int_{\Omega\cap B_r}|\nabla n|^2\leq Cr
\quad 0<r<r_0.
\]
Since $0$ is singular, Proposition~\ref{prop:improvement} gives the matching lower bound $\int_{\Omega\cap B_r}|\nabla n|^2\geq cr$. For fixed $R$ and large $j$, $r_jR<r_0$, so $u_j$ has uniform two-sided bounds on $B_R^+$. Proposition~\ref{prop:compactness}, smooth domain convergence, and a diagonal subsequence give $U\in H^1_{\mathrm{loc}}(\R^3_+;\Sph^2)$ with trace $q$, strong $H^1$ convergence on compact half-balls, and \eqref{eq:tangent-linear-growth}.

If $V$ differs from $U$ in a compact subset and has the same flat trace, splice $V$ to $u_j$ in a slightly larger half-ball by Lemma~\ref{lem:luckhaus}. Minimality of $u_j$ and passage to the limit show $E_{W_0}(U)\leq E_{W_0}(V)$. Thus $U$ is class-A minimizing.
\end{proof}

\section{The positive boundary stress measure}

\subsection{Energy--momentum tensor and inner stationarity}

We now introduce the device that will replace the boundary monotonicity
formula. It applies to a general class of autonomous quadratic integrands, not
only to the Oseen--Frank density. Set
\[
\mathbb R^3_+
:=
\{x=(x',x_3)\in\mathbb R^3:x_3>0\},
\quad
D_R
:=
\{(x',0)\in\partial\mathbb R^3_+:| x'|<R\}.
\]
Let
\[
W:\mathbb S^2\times\mathbb R^{3\times3}\to[0,+\infty)
\]
be autonomous, of class \(C^2\), quadratic in \(P\), and uniformly elliptic in
the sense of \eqref{eq:coercivity}. In particular,
\[
W(z,tP)=t^2W(z,P)
\quad
(z\in\mathbb S^2,\ P\in\mathbb R^{3\times3},\ t\in\mathbb R).
\]
As before, we write
\[
E_W(u;A):=\int_A W(u,\nabla u)\,\mathrm{d} x.
\]

\begin{definition}[Class-A minimizer]
Let \(q\in\mathbb S^2\). A map
\[
u\in H^1_{\mathrm{loc}}
   (\overline{\mathbb R^3_+};\mathbb S^2),
\quad
\operatorname{Tr}_{\partial\mathbb R^3_+}u=q,
\]
is called a \emph{class-A minimizer} of \(E_W\) in \(\mathbb R^3_+\) if
it minimizes the energy with respect to any boundedly supported admissible
perturbation.

More precisely, let \(G\subset\mathbb R^3\) be bounded and Lipschitz, and
suppose that
\[
G^+:=G\cap\mathbb R^3_+
\]
is a Lipschitz domain. If \(v\in H^1(G^+;\mathbb S^2)\) satisfies
\[
\operatorname{Tr}v=\operatorname{Tr}u
\quad\text{on }\partial G^+\cap\mathbb R^3_+,
\quad
\operatorname{Tr}v=q
\quad\text{on }\partial G^+\cap\partial\mathbb R^3_+,
\]
then
\[
E_W(u;G^+)\leq E_W(v;G^+).
\]
\end{definition}

\begin{remark}
Class-A minimality is the natural notion on the unbounded half-space, where
the total energy may be infinite. It requires exact minimization against any
compactly supported perturbation, without any smallness assumption on the
competitor. A global minimizer on a bounded domain with fixed Dirichlet trace
is class-A minimizing on any bounded comparison region, but the converse
need not hold because class-A minimality imposes no condition at infinity.
\end{remark}

For \(P=(P_i^a)\in\mathbb R^{3\times3}\), we use the convention
\[
P_i^a=\partial_i u^a,
\quad
W_{P_i^a}(z,P)
:=
\frac{\partial W}{\partial P_i^a}(z,P).
\]
Here \(i,j\in\{1,2,3\}\) are spatial indices,
\(a\in\{1,2,3\}\) is a target-component index, and repeated indices are
summed. The energy-momentum tensor associated with \(u\) is
\begin{equation}
T_{ij}[u]
:=
W(u,\nabla u)\delta_{ij}
-
W_{P_i^a}(u,\nabla u)\,\partial_j u^a .
\end{equation}

The next proposition derives the inner-variation formula directly at
\(H^1\) regularity and then applies it to class-A minimizers.

\begin{proposition}[\(H^1\) inner variation and inner stationarity]
\label{prop:inner-stationarity}
Let \(K\subset\subset\mathbb R^3\) be a bounded Lipschitz domain,
\(v\in H^1(K;\mathbb S^2)\), and
\(X\in C_c^1(K;\mathbb R^3)\). Define
\[
\Phi_s:=\operatorname{id}-sX
\]
and, for sufficiently small \(\lvert s\rvert\),
\[
J(s)
:=
\int_K
W\bigl(v\circ\Phi_s,\nabla(v\circ\Phi_s)\bigr)\,\mathrm{d} x.
\]
Then \(J\) is differentiable at \(s=0\), and
\begin{equation}
J'(0)
=
\int_K
\left[
W(v,\nabla v)\delta_{ij}
-
W_{P_i^a}(v,\nabla v)\partial_jv^a
\right]
\partial_iX_j\,\mathrm{d} x.
\label{eq:inner-variation}
\end{equation}

Consequently, if \(u\) is a class-A minimizer in \(\mathbb R^3_+\), then
\begin{equation}
\int_{\mathbb R^3_+}
T_{ij}[u]\partial_iX_j\,\mathrm{d} x=0
\quad
\text{for any }
X\in C_c^1(\mathbb R^3_+;\mathbb R^3).
\label{eq:weak-inner-stationarity}
\end{equation}
Equivalently,
\begin{equation}
\partial_iT_{ij}[u]=0
\quad\text{in }\mathcal D'(\mathbb R^3_+).
\label{eq:stress-conservation}
\end{equation}
Moreover,
\[
| T[u]|\leq C|\nabla u|^2
\quad\text{almost everywhere in }\mathbb R^3_+,
\]
where \(C\) depends only on the quadratic-growth bounds for \(W\).
\end{proposition}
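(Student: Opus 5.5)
The plan is to prove the first-variation formula \eqref{eq:inner-variation} by changing variables, and then to specialize to class-A minimizers.

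\emph{Change of variables.} For small \(|s|\), \(\Phi_s=\operatorname{id}-sX\) is a \(C^1\) diffeomorphism of \(K\) onto itself that equals the identity near \(\partial K\). Write \(v_s:=v\circ\Phi_s\in H^1(K;\mathbb S^2)\), so that \(\nabla v_s(x)=\nabla v(\Phi_s(x))\,D\Phi_s(x)\). Substitute \(y=\Phi_s(x)\) and set \(\Psi_s:=\Phi_s^{-1}\). Then
\[
J(s)=\int_K W\bigl(v(y),\nabla v(y)\,D\Phi_s(\Psi_s(y))\bigr)\,\lvert\det D\Psi_s(y)\rvert\,\mathrm{d} y.
\]
The key point is that \(v\) is no longer composed with anything depending on \(s\). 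All \(s\)-dependence now sits in the smooth matrix fields
\[
M_s(y):=D\Phi_s(\Psi_s(y))=I-s\,DX(y)+o(s)
\]
and
\[
\det D\Psi_s(y)=1+s\operatorname{div}X(y)+o(s),
\]
with the \(o(s)\) terms uniform in \(y\) because \(X\in C^1_c\).

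\emph{Differentiation at \(s=0\).} The integrand is \(W(v,\nabla v\,M_s)\,\det D\Psi_s\). Since \(W\) is quadratic in \(P\), this is a polynomial in the entries of \(M_s\) with coefficients of the form \(A(v)[\nabla v,\nabla v]\), which are bounded by \(C|\nabla v|^2\in L^1\). The difference quotients are therefore dominated by \(C\|X\|_{C^1}|\nabla v|^2\), and dominated convergence justifies differentiating under the integral. Using
\[
\frac{\mathrm{d}}{\mathrm{d} s}\Big|_{s=0}(\nabla v\,M_s)^a_i=-\partial_jv^a\,\partial_iX_j
\]
together with the determinant expansion, we obtain
\[
J'(0)=\int_K\bigl[W\operatorname{div}X-W_{P_i^a}\partial_jv^a\partial_iX_j\bigr],
\]
which is \eqref{eq:inner-variation}.

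The only bookkeeping hazard is the index convention: which of \(i,j\) carries the derivative of \(X\) in the chain rule. I will check it against the convention \(P_i^a=\partial_iu^a\) used above, and that check is the step I expect to need care. The rest is routine.

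\emph{Class-A minimizers.} Let \(X\in C_c^1(\mathbb R^3_+;\mathbb R^3)\) and choose a bounded Lipschitz \(G\subset\subset\mathbb R^3_+\) containing \(\operatorname{supp}X\). For \(|s|\) small, \(u\circ\Phi_s=u\) near \(\partial G\), and \(\partial G^+\cap\partial\mathbb R^3_+=\varnothing\). So \(u\circ\Phi_s\) is an admissible competitor for every small \(s\) of either sign. Hence \(J\) has an interior minimum at \(s=0\), so \(J'(0)=0\), which is \eqref{eq:weak-inner-stationarity}. The distributional identity \eqref{eq:stress-conservation} is just a restatement of \eqref{eq:weak-inner-stationarity}, obtained by taking \(X=\varphi e_j\).

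\emph{Pointwise bound.} Finally, \(|T[u]|\le C|\nabla u|^2\) holds because \(W\le\Lambda|P|^2\) and \(|W_P(z,P)|\le C|P|\), the latter from the quadratic structure with bounded coefficients.
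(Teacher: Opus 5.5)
Your proposal is correct and follows essentially the same route as the paper. The change of variables $y=\Phi_s(x)$ moves all $s$-dependence into $I-s\,DX(\Psi_s(y))$ and $\det D\Psi_s(y)$, differentiation at $s=0$ is justified by dominated convergence using $|W(z,P)|\le C|P|^2$ and $|W_P(z,P)|\le C|P|$, and stationarity follows because interior inner variations are admissible for both signs of $s$ (one small imprecision: with $X$ only $C^1$ these matrix fields are continuous rather than smooth in $y$ and differentiable in $s$ only at $s=0$, but your uniform $o(s)$ expansion is exactly what the argument needs).
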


\begin{proof}
For sufficiently small \(\lvert s\rvert\), the map
\(\Phi_s=\operatorname{id}-sX\) is bi-Lipschitz and equals the identity
near \(\partial K\). Let
\[
\Psi_s:=\Phi_s^{-1}.
\]
Using the change of variables \(y=\Phi_s(x)\), we obtain
\[
J(s)
=
\int_K
W\left(
v(y),
\nabla v(y)
\bigl[I-sDX(\Psi_s(y))\bigr]
\right)
\det D\Psi_s(y)\,\mathrm{d} y.
\]
This expression involves only \(v\) and its first weak derivatives; in
particular, no second derivative of \(v\) is required.

As \(s\to0\), uniformly on \(K\),
\[
D\Psi_s(y)=I+sDX(y)+o(s)
\]
and hence
\[
\det D\Psi_s(y)
=
1+s\operatorname{div}X(y)+o(s).
\]
Furthermore, in components,
\[
\left[
\nabla v(y)
\bigl(I-sDX(\Psi_s(y))\bigr)
\right]_i^a
=
\partial_i v^a
-
s\,\partial_jv^a\,\partial_iX_j
+
o(s).
\]
Differentiating the integrand at \(s=0\) therefore gives
\[
W(v,\nabla v)\operatorname{div}X
-
W_{P_i^a}(v,\nabla v)
\,\partial_jv^a\,\partial_iX_j.
\]
Since
\[
\operatorname{div}X=\delta_{ij}\partial_iX_j,
\]
this is precisely the integrand in \eqref{eq:inner-variation}.

Because \(W\) is quadratic in \(P\),
\[
\lvert W(z,P)\rvert\leq C| P|^2,
\quad
| W_P(z,P)|\leq C| P|.
\]
The difference quotient is consequently bounded by
\[
C|\nabla v|^2\| DX\|_{L^\infty(K)},
\]
which is integrable. Dominated convergence proves
\eqref{eq:inner-variation} for any \(v\in H^1(K;\mathbb S^2)\).

Now let \(u\) be a class-A minimizer and take
\[
X\in C_c^1(\mathbb R^3_+;\mathbb R^3).
\]
Choose \(K\subset\subset\mathbb R^3_+\) containing \(\operatorname{spt}X\). The maps
\[
u_s:=u\circ\Phi_s
\]
coincide with \(u\) near \(\partial K\) and are therefore admissible
competitors for both positive and negative \(s\). Class-A minimality gives
\[
J(0)\leq J(s)
\]
for all sufficiently small \(\lvert s\rvert\). Thus \(s=0\) is a local minimum of \(J\),
and
\[
J'(0)=0.
\]
Substituting \eqref{eq:inner-variation} yields
\eqref{eq:weak-inner-stationarity}.

By the definition of distributional divergence,
\[
\left\langle \partial_iT_{ij}[u],X_j\right\rangle
=
-\int_{\mathbb R^3_+}
T_{ij}[u]\partial_iX_j\,\mathrm{d} x,
\]
so \eqref{eq:weak-inner-stationarity} is equivalent to
\eqref{eq:stress-conservation}. Finally, the quadratic bounds for \(W\) and
\(W_P\) give
\[
| T[u]|
\leq
C\bigl(|\nabla u|^2
+| W_P(u,\nabla u)|\,|\nabla u|\bigr)
\leq C|\nabla u|^2.
\]
Then we complete the proof.
\end{proof}

\begin{remark}
The vector fields in Proposition~\ref{prop:inner-stationarity} are strictly supported
 inside \(\mathbb R^3_+\), so both signs of the variation parameter
are admissible, and the first variation vanishes. A normal variation reaching
the Dirichlet plane is generally admissible in only one direction. The
resulting one-sided inequality, rather than an equality, is the source of the
positive boundary stress measure constructed below.
\end{remark}

\subsection{The one-sided Dirichlet stress measure}

We associate with the Dirichlet plane a non-negative measure representing the
normal flux of the energy-momentum tensor.

\begin{proposition}[One-sided Dirichlet stress measure]
\label{prop:stress}
Let \(u:\mathbb R^3_+\to\mathbb S^2\) be a class-A minimizer satisfying
\[
\operatorname{Tr}_{\partial\mathbb R^3_+}u=q
\]
for some \(q\in\mathbb S^2\). Then there exists a unique non-negative Radon
measure \(\mu_u\) on
\(\partial\mathbb R^3_+\simeq\mathbb R^2\) such that
\begin{equation}
\int_{\mathbb R^2}\phi\,\mathrm{d} \mu_u
=
\int_{\mathbb R^3_+}
T_{i3}[u]\partial_i\zeta\,\mathrm{d} x
\label{eq:boundary-stress-definition}
\end{equation}
whenever
\[
\phi\in C_c^\infty(\mathbb R^2),
\quad
\zeta\in C_c^\infty(\mathbb R^3),
\quad
\zeta(x',0)=\phi(x').
\]

If \(u\) is \(C^1\) up to an open boundary patch \(D\), then
\begin{equation}
\mu_u\llcorner D
=
W(q,\partial_3u\otimes e_3)\,\mathrm{d} x'.
\label{eq:boundary-stress-density}
\end{equation}
Consequently,
\[
\mu_u\llcorner D=0
\quad\Leftrightarrow\quad
\partial_3u=0\quad\text{on }D.
\]
\end{proposition}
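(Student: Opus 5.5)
The plan is to define \(\mu_u\) as a linear functional on boundary test functions, show that it is well defined and non-negative, and obtain the measure from the Riesz representation theorem. For \(\zeta\in C_c^\infty(\mathbb R^3)\) set
\[
L(\zeta):=\int_{\mathbb R^3_+}T_{i3}[u]\partial_i\zeta\,\mathrm{d} x,
\]
which is finite since \(|T[u]|\leq C|\nabla u|^2\in L^1_{\mathrm{loc}}\) by Proposition~\ref{prop:inner-stationarity}.

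\emph{Step 1: \(L(\zeta)\) depends only on \(\zeta(\cdot,0)\).} It suffices to show \(L(\zeta)=0\) when \(\zeta(x',0)=0\). Let \(\chi\in C^\infty(\mathbb R)\) satisfy \(\chi=0\) on \((-\infty,1]\) and \(\chi=1\) on \([2,+\infty)\), and set \(\zeta_\varepsilon:=\chi(x_3/\varepsilon)\zeta\). Then \(\zeta_\varepsilon e_3\in C_c^1(\mathbb R^3_+;\mathbb R^3)\), so \eqref{eq:weak-inner-stationarity} gives \(L(\zeta_\varepsilon)=0\). The difference is
\[
L(\zeta)-L(\zeta_\varepsilon)=\int T_{i3}\,\partial_i\bigl[(1-\chi(x_3/\varepsilon))\zeta\bigr].
\]
In this integrand, the term where the derivative falls on \(\zeta\) is supported in the strip \(\{0<x_3<2\varepsilon\}\cap\operatorname{spt}\zeta\), so it tends to zero by absolute continuity. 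The term where the derivative falls on the cutoff is \(-\varepsilon^{-1}\chi'(x_3/\varepsilon)T_{33}\zeta\). Since \(|\zeta|\leq Cx_3\leq 2C\varepsilon\) on the strip, it is bounded by \(C\int_{\text{strip}}|\nabla u|^2\to0\). Hence \(\Lambda(\phi):=L(\zeta)\) is a well-defined linear functional on \(C_c^\infty(\mathbb R^2)\).

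\emph{Step 2: positivity by one-sided inner variation.} This is the step where minimality enters and where I expect the main care is needed. Extend \(u\) by \(q\) to \(\{x_3<0\}\); the extension lies in \(H^1_{\mathrm{loc}}(\mathbb R^3)\) because the trace is \(q\). Since \(W(q,0)=0\), the lower half contributes nothing to energies or to \(T\). Given \(\phi\geq0\), choose \(\zeta\geq0\) with \(\zeta(x',0)=\phi(x')\), put \(X=\zeta e_3\) and \(\Phi_s=\operatorname{id}-sX\), and take \(K\) a large ball. For \(s>0\), \(\Phi_s\) moves points downward, so \(u\circ\Phi_s\) equals \(q\) on a neighbourhood of the plane within the support. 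Its restriction to \(G^+=K\cap\mathbb R^3_+\) therefore has flat trace \(q\) and agrees with \(u\) near \(\partial K\); it is admissible in the class-A definition. (For \(s<0\) interior values would be pulled to the plane, so that direction is unavailable.) Minimality gives \(J(s)\geq J(0)\) for small \(s>0\). The extended map has energy only in \(\mathbb R^3_+\), so \(J(s)\) is exactly the energy of the competitor on \(G^+\). Since \(J\) is differentiable at \(0\) by \eqref{eq:inner-variation}, \(J'(0)\geq0\). Because \(T\) vanishes in the lower half, \(J'(0)=\int_{\mathbb R^3_+}T_{i3}\partial_i\zeta=\Lambda(\phi)\). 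Thus \(\Lambda\) is a positive functional. Positivity makes it continuous in the sup norm on functions supported in each fixed compact set, so it extends to \(C_c(\mathbb R^2)\), and Riesz gives a unique non-negative Radon measure \(\mu_u\); uniqueness follows from density of \(C_c^\infty\) in \(C_c\).

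\emph{Step 3: the density on a regular patch.} Suppose \(u\in C^1\) up to \(D\) and \(\operatorname{spt}\phi\subset D\). Take \(\zeta=\phi(x')\eta(x_3)\) with \(\eta\) a cutoff near \(0\). Apply Step 1's cutoff \(\chi(x_3/\varepsilon)\) again, now without assuming \(\zeta\) vanishes on the plane. The part \(\int T_{i3}\partial_i(\chi_\varepsilon\zeta)\) vanishes by stationarity, leaving
\[
L(\zeta)=\lim_{\varepsilon\to0}\Bigl(-\int\varepsilon^{-1}\chi'(x_3/\varepsilon)\,T_{33}\,\zeta\,\mathrm{d} x\Bigr)=-\int_D T_{33}(x',0)\,\phi\,\mathrm{d} x',
\]
using continuity of \(T\) up to \(D\) (the other terms vanish as the strip shrinks). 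On \(D\) the tangential derivatives of \(u\) vanish since \(u=q\), so \(\nabla u=\partial_3u\otimes e_3\). Then \(W_{P_3^a}\partial_3u^a=P:W_P=2W\) by two-homogeneity, so \(T_{33}=W-2W=-W\). Hence \(\mu_u\llcorner D=W(q,\partial_3u\otimes e_3)\,\mathrm{d} x'\). Finally, by \eqref{eq:coercivity} this density is comparable to \(|\partial_3u|^2\); as it is continuous, it vanishes on \(D\) iff \(\partial_3u\equiv0\) there, giving the stated equivalence.
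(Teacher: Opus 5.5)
Your proof is correct. Steps 2 and 3 follow the paper's structure: the constant extension by \(q\), the one-sided deformation \(\Phi_s=\operatorname{id}-s\zeta e_3\) with \(s>0\) and \(\zeta\ge0\), Riesz representation, and \(T_{33}=-W(q,\partial_3u\otimes e_3)\) by two-homogeneity. In Step 2, the identification of \(J(s)\) with the competitor's energy uses that \(\Phi_s\) maps \(\{x_3\le0\}\) into itself because \(\zeta\ge0\). You leave this implicit, but it is true.

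The genuine difference is in Step 1. The paper proves independence of the extension by a second, two-sided variation. It writes \(h=\zeta_1-\zeta_2=x_3k\) and uses \(\Phi_s=x-sh(x)e_3\), which for small \(|s|\) fixes the plane and maps \(\mathbb R^3_+\) onto itself, so class-A minimality applies for both signs of \(s\). You instead use only the interior stationarity \eqref{eq:weak-inner-stationarity}, the cutoff \(\chi(x_3/\varepsilon)\), and the bound \(|\zeta|\le Cx_3\). This controls the boundary-layer term by \(\int_{\{0<x_3<2\varepsilon\}\cap\operatorname{spt}\zeta}|\nabla u|^2\to0\).

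Your route needs less: it works for any map that is stationary under interior domain variations and has \(\nabla u\in L^2_{\mathrm{loc}}\) up to the plane. The same cutoff limit in your Step 3 also justifies the boundary integration by parts that the paper asserts directly from \(\partial_iT_{i3}=0\). For a field that is only \(L^1\), distributionally divergence-free, and continuous merely up to \(D\), that divergence-theorem step is exactly what your limit proves. The paper's Step 1 is shorter given minimality, but its Step 3 is correspondingly less explicit than yours.
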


\begin{proof}
We divide the proof into three steps.

\emph{Step 1: independence of the extension.}
Suppose that \(\zeta_1,\zeta_2\in C_c^\infty(\mathbb R^3)\) have the same
trace on \(\{x_3=0\}\), and set
\[
h:=\zeta_1-\zeta_2.
\]
Then \(h(x',0)=0\), and hence, near the plane,
\[
\lvert h(x',x_3)\rvert
\leq
\|\partial_3h\|_{L^\infty}\lvert x_3\rvert.
\]
Smoothness and the zero trace give
\[
h(x',x_3)=x_3k(x',x_3)
\]
for a smooth bounded function \(k\) near the plane. Therefore, for
sufficiently small positive and negative \(s\), the maps
\[
\Phi_s(x)=x-sh(x)e_3
\]
are bi-Lipschitz self-maps of \(\mathbb R^3_+\), fix its boundary, and equal
the identity outside a bounded set. Both signs of the corresponding inner
variation are admissible. On a bounded half-space region containing the
support of \(h\), repeat the change-of-variables calculation from
Proposition~\ref{prop:inner-stationarity}. Class-A minimality for both signs
then gives
\[
\int_{\mathbb R^3_+}
T_{i3}[u]\partial_i h\,\mathrm{d} x=0.
\]
It follows that
\[
\int_{\mathbb R^3_+}
T_{i3}[u]\partial_i\zeta_1\,\mathrm{d} x
=
\int_{\mathbb R^3_+}
T_{i3}[u]\partial_i\zeta_2\,\mathrm{d} x.
\]
Thus the right-hand side of \eqref{eq:boundary-stress-definition} depends only
on the boundary trace \(\phi\).

\emph{Step 2: positivity.}
Extend \(u\) by \(q\) to the lower half-space:
\[
\bar u(x)
:=
\begin{cases}
u(x),&x_3>0,\\
q,&x_3\leq0
\end{cases}.
\]
Since \(u=q\) in the trace sense on the plane,
\[
\bar u\in H^1_{\mathrm{loc}}(\mathbb R^3;\mathbb S^2).
\]

Let \(\phi\in C_c^\infty(\mathbb R^2)\) satisfy \(\phi\geq0\), and choose
\(\zeta\in C_c^\infty(\mathbb R^3)\) such that
\[
\zeta\geq0,
\quad
\zeta(x',0)=\phi(x').
\]
For \(s>0\), set
\[
\Phi_s(x)=x-s\zeta(x)e_3,
\quad
u_s
:=
\left.\bar u\circ\Phi_s\right|_{\mathbb R^3_+}.
\]
Because \(\Phi_s\) moves the boundary into the lower half-space, where
\(\bar u=q\), the map \(u_s\) has the same Dirichlet trace as \(u\). It is
therefore an admissible compactly supported competitor. Choose a ball
\(K\subset\subset\mathbb R^3\) whose interior contains
\(\operatorname{spt}\zeta\), so that \(\zeta=0\) near \(\partial K\) and
\(K^+:=K\cap\mathbb R^3_+\) is Lipschitz. Define the full-space variation
energy
\[
J(s)
:=
\int_K
W\bigl(\bar u\circ\Phi_s,\nabla(\bar u\circ\Phi_s)\bigr)\,\mathrm{d} x.
\]
For \(x_3\leq0\) and \(s\geq0\), one has
\(\Phi_s(x)_3=x_3-s\zeta(x)\leq0\). Hence \(\bar u\circ\Phi_s=q\) there,
and its contribution to \(J(s)\) is zero because \(W(q,0)=0\). Thus the
upper-half-space part of \(J(s)\) is exactly the energy of \(u_s\).
Class-A minimality gives
\[
J'(0+)\geq0.
\]
Apply the general \(H^1(K)\) inner-variation formula in
Proposition~\ref{prop:inner-stationarity} to \(\bar u\), with
\(X=\zeta e_3\). The stress vanishes in the lower half-space, and therefore
\[
0\leq J'(0+)=
\int_{\mathbb R^3_+}
T_{i3}[u]\partial_i\zeta\,\mathrm{d} x.
\]
Hence, the functional
\[
\phi\mapsto
\int_{\mathbb R^3_+}
T_{i3}[u]\partial_i\zeta\,\mathrm{d} x
\]
is a positive distribution on \(\mathbb R^2\). Any positive distribution is
a non-negative Radon measure, which defines \(\mu_u\). Its uniqueness follows
from the uniqueness of Radon measures determined by their action on test
functions.

\emph{Step 3: density on a regular boundary patch.}
Assume that \(u\) is \(C^1\) up to an open boundary patch \(D\). Since
\[
\partial_iT_{i3}=0
\quad\text{in }\mathbb R^3_+,
\]
integration by parts and the outward normal
\(\nu=-e_3\) give
\[
\mathrm{d} \mu_u=-T_{33}[u]\,\mathrm{d} x'
\quad\text{on }D.
\]

The constant boundary condition implies
\[
\partial_1u=\partial_2u=0
\quad\text{on }D,
\]
so that
\[
\nabla u=\partial_3u\otimes e_3
\quad\text{on }D.
\]
By the two-homogeneity of \(W\),
\[
W_{P_3^a}
   (q,\partial_3u\otimes e_3)\,\partial_3u^a
=
2W(q,\partial_3u\otimes e_3).
\]
Therefore,
\[
T_{33}[u]
=
W(q,\partial_3u\otimes e_3)
-
W_{P_3^a}
   (q,\partial_3u\otimes e_3)\,\partial_3u^a
=
-W(q,\partial_3u\otimes e_3).
\]
This proves \eqref{eq:boundary-stress-density}. Finally, uniform coercivity
gives
\[
W(q,\partial_3u\otimes e_3)
\geq
\lambda|\partial_3u|^2,
\]
and hence
\[
\mu_u\llcorner D=0
\quad\Leftrightarrow\quad
\partial_3u=0\quad\text{on }D.
\]
\end{proof}

\begin{remark}
The positivity of \(\mu_u\) comes from the direction of the boundary
variation: pushing the Dirichlet plane into the constant extension is
admissible, whereas the opposite normal variation does not need to preserve the
boundary condition.
\end{remark}

\subsection{Flux identity, finiteness, and scaling}

For almost every \(R>0\), define the normal configurational flux through the
upper hemisphere by
\[
F_u(R)
:=
\int_{\partial B_R\cap\mathbb R^3_+}
T_{i3}[u]\nu_i\,\mathrm{d} S
=
\int_{\partial B_R\cap\mathbb R^3_+}
T_{i3}[u]\frac{x_i}{R}\,\mathrm{d} S,
\]
where \(\nu=\frac{x}{R}\) is the outward unit normal to \(\partial B_R\).

\begin{lemma}
\label{lem:flux}
Let \(u:\mathbb R^3_+\to\mathbb S^2\) be a class-A minimizer with constant
trace \(q\), and let \(\mu_u\) be the boundary stress measure from
Proposition~\ref{prop:stress}. Then the following statements hold.

\begin{enumerate}[label=$(\theenumi)$]
\item \emph{Flux identity.} For almost every \(R>0\),
\begin{equation}
F_u(R)=-\mu_u(D_R).
\label{eq:flux-identity}
\end{equation}

\item \emph{Finite total boundary stress.} If
\begin{equation}
E_W(u;B_R^+)\leq MR
\quad\text{for any }R>0,
\label{eq:global-linear-upper-growth}
\end{equation}
then
\[
\mu_u(\mathbb R^2)
\leq C(\lambda,\Lambda)M.
\]

\item \emph{Scaling law.} For \(R>0\), let
\[
u_R(x):=u(Rx).
\]
Then, for any Borel set \(A\subset\mathbb R^2\),
\begin{equation}
\mu_{u_R}(A)=\mu_u(RA).
\label{eq:boundary-stress-scaling}
\end{equation}
\end{enumerate}
\end{lemma}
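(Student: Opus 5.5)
For the flux identity (1), I will test the defining relation \eqref{eq:boundary-stress-definition} with radial cut-offs. Fix \(R>0\) and \(\varepsilon>0\). Choose \(\zeta_\varepsilon(x)=\eta_\varepsilon(|x|)\), where \(\eta_\varepsilon\equiv1\) on \([0,R-\varepsilon]\), \(\eta_\varepsilon\equiv0\) on \([R,\infty)\), and \(\eta_\varepsilon\) is linear in between. After mollification this is smooth; the Lipschitz version is admissible by density, since \(T\in L^1_{\mathrm{loc}}\) by the bound \(|T[u]|\le C|\nabla u|^2\). Its trace \(\phi_\varepsilon(x')=\eta_\varepsilon(|x'|)\) increases to \(\mathbf 1_{D_R}\) as \(\varepsilon\downarrow0\). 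The right side of \eqref{eq:boundary-stress-definition} becomes
\[
-\frac1\varepsilon\int_{B_R^+\setminus B_{R-\varepsilon}^+}T_{i3}[u]\frac{x_i}{|x|}\,\mathrm{d}x .
\]
By the coarea formula and Lebesgue differentiation in the radial variable, this tends to \(-F_u(R)\) for almost every \(R\). On the left, monotone convergence gives \(\mu_u(D_R)\), because \(\phi_\varepsilon\uparrow\mathbf 1_{D_R}\) pointwise on the open disk. The identity \eqref{eq:flux-identity} follows.

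For finiteness (2), the flux identity gives \(\mu_u(D_R)=-F_u(R)\le C\int_{S_R^+}|\nabla u|^2\,\mathrm{d}S\). Averaging over \(R\in(\rho,2\rho)\) and using monotonicity of \(R\mapsto\mu_u(D_R)\), I obtain
\[
\mu_u(D_\rho)\le\frac1\rho\int_\rho^{2\rho}\mu_u(D_R)\,\mathrm{d}R\le\frac{C}{\rho}\int_{B_{2\rho}^+}|\nabla u|^2\le\frac{C}{\lambda\rho}\,E_W(u;B_{2\rho}^+)\le \frac{2CM}{\lambda}.
\]
The constant is independent of \(\rho\), so letting \(\rho\to\infty\) gives \(\mu_u(\mathbb R^2)\le C(\lambda,\Lambda)M\). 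This is exactly the step where dimension three matters: the \(1/\rho\) from averaging cancels the linear growth.

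For the scaling law (3), note first that \(u_R\) is again a class-A minimizer with trace \(q\), so \(\mu_{u_R}\) is defined. By two-homogeneity of \(W\), \(T[u_R](x)=R^2\,T[u](Rx)\). Given a test pair \((\phi,\zeta)\), set \(\zeta^R(y)=\zeta(y/R)\), whose trace is \(\phi(\cdot/R)\). The change of variables \(y=Rx\) gives
\[
\int T_{i3}[u_R]\,\partial_i\zeta\,\mathrm{d}x=\int R^2\,T_{i3}[u](y)\,R\,\partial_i\zeta^R(y)\,R^{-3}\,\mathrm{d}y=\int \phi(y'/R)\,\mathrm{d}\mu_u(y').
\]
Hence \(\mu_{u_R}\) is the pushforward of \(\mu_u\) under \(y'\mapsto y'/R\), which is \eqref{eq:boundary-stress-scaling} for Borel sets by uniqueness of Radon measures.

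The main technical obstacle is the limit in (1). It needs care in two places: admitting Lipschitz, non-smooth test functions in \eqref{eq:boundary-stress-definition}, and showing that the annular averages converge to the spherical flux at almost every \(R\). For the first, I plan to approximate by mollifications \(\zeta*\rho_\delta\), using the \(L^1_{\mathrm{loc}}\) integrability of \(T\) on the right and uniform convergence of traces together with local finiteness of \(\mu_u\) on the left. The second then follows from Fubini's theorem, since \(\nabla u\in L^2(S_R^+)\) for almost every \(R\).
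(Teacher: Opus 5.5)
Your proposal is correct and follows the paper's proof. In (1) you test with radial cut-offs $\eta(|x|)$ and apply the coarea formula; in (2) you bound the flux on spheres in $B_{2\rho}^+\setminus B_\rho^+$ and combine this with monotonicity of $R\mapsto\mu_u(D_R)$; in (3) you change variables $y=Rx$ using $T[u_R](x)=R^2T[u](Rx)$. The only differences are technical: in (1) you take Lipschitz cut-offs increasing to $\mathbf 1_{D_R}$ and use Lebesgue differentiation, where the paper uses smooth $\eta$ and Stieltjes integration by parts to show $r\mapsto F_u(r)+\mu_u(D_r)$ is a.e.\ constant and hence zero; in (2) you average over $R\in(\rho,2\rho)$ instead of selecting a single good radius.
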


\begin{proof}
\emph{Step 1: flux identity.}
Choose
\[
\eta\in C_c^\infty([0,+\infty))
\]
constant near \(0\), and use
\[
\zeta(x):=\eta(| x|)
\]
in the definition of \(\mu_u\). Since
\[
\partial_i\zeta(x)
=
\eta'(| x|)\frac{x_i}{| x|},
\]
the coarea formula gives
\begin{equation*}
\int_{\mathbb R^2}\eta(| x'|)\,\mathrm{d} \mu_u(x')
=
\int_0^{+\infty} \eta'(r)F_u(r)\,\mathrm{d} r.
\end{equation*}

Set
\[
m(r):=\mu_u(D_r).
\]
Stieltjes integration by parts yields
\begin{equation*}
\int_{\mathbb R^2}\eta(| x'|)\,\mathrm{d} \mu_u(x')
=
-\int_0^{+\infty}\eta'(r)m(r)\,\mathrm{d} r.
\end{equation*}
This formula also includes a possible atom of \(\mu_u\) at the origin.
Comparing the preceding identities, we obtain
\[
\int_0^{+\infty}
\eta'(r)\bigl(F_u(r)+m(r)\bigr)\,\mathrm{d} r=0.
\]
Hence \(F_u+m\) is constant almost everywhere. Since the value of
\(\eta(0)\) is arbitrary, that constant must be zero. Therefore
\[
F_u(R)=-m(R)=-\mu_u(D_R)
\]
for almost every \(R>0\), proving \eqref{eq:flux-identity}.

\emph{Step 2: finite total mass.}
The quadratic-growth estimate for the energy-momentum tensor gives
\[
\lvert F_u(s)\rvert
\leq
C\int_{\partial B_s\cap\mathbb R^3_+}
|\nabla u|^2\,\mathrm{d} S
\]
for almost every \(s>0\). By coercivity and
\eqref{eq:global-linear-upper-growth},
\[
\int_{B_{2R}^+\backslash B_R^+}|\nabla u|^2\,\mathrm{d} x
\leq
\lambda^{-1}E_W(u;B_{2R}^+)
\leq
\frac{2M}{\lambda}R.
\]
The coarea formula therefore provides a radius \(s\in(R,2R)\), chosen also
so that \eqref{eq:flux-identity} holds, such that
\[
\int_{\partial B_s\cap\mathbb R^3_+}
|\nabla u|^2\,\mathrm{d} S
\leq C(\lambda)M.
\]
Consequently,
\[
\lvert F_u(s)\rvert\leq C(\lambda,\Lambda)M.
\]

Since \(\mu_u\geq0\) and \(D_R\subset D_s\), the flux identity gives
\[
\mu_u(D_R)
\leq
\mu_u(D_s)
=
-F_u(s)
\leq
C(\lambda,\Lambda)M.
\]
Letting \(R\to+\infty\) proves
\[
\mu_u(\mathbb R^2)
\leq
C(\lambda,\Lambda)M.
\]

\emph{Step 3: scaling.}
Because \(W\) is quadratic in its matrix variable,
\[
\nabla u_R(x)=R\nabla u(Rx),
\quad
T[u_R](x)=R^2T[u](Rx).
\]
Let \(\phi\in C_c^\infty(\mathbb R^2)\), and let
\(\zeta\in C_c^\infty(\mathbb R^3)\) satisfy
\[
\zeta(x',0)=\phi(x').
\]
Using \eqref{eq:boundary-stress-definition} and changing variables
\(y=Rx\), we find
\begin{align*}
\int_{\mathbb R^2}\phi(x')\,\mathrm{d} \mu_{u_R}(x')
&=
\int_{\mathbb R^3_+}
R^2T_{i3}[u](Rx)\,\partial_i\zeta(x)\,\mathrm{d} x\\
&=
\int_{\mathbb R^3_+}
T_{i3}[u](y)\,
\partial_{y_i}\!\left[\zeta\!\left(\frac{y}{R}\right)\right]\,\mathrm{d} y\\
&=
\int_{\mathbb R^2}
\phi\!\left(\frac{y'}{R}\right)\,\mathrm{d} \mu_u(y').
\end{align*}
Thus, \(\mu_{u_R}\) is the push-forward of \(\mu_u\) under
\(y'\mapsto \frac{y'}{R}\), which is equivalent to
\[
\mu_{u_R}(A)=\mu_u(RA)
\]
for any Borel set \(A\subset\mathbb R^2\).
\end{proof}

\begin{remark}[Role of the dimension]
The scaling law \eqref{eq:boundary-stress-scaling} does not contain a multiplicative
factor because the domain has dimension three. In dimension \(m\), the
corresponding formula would be
\[
\mu_{u_R}(A)=R^{3-m}\mu_u(RA).
\]
Thus, the exact scale invariance of the boundary stress measure used in the following is
specific to \(m=3\).
\end{remark}

\section{Large-scale rigidity and proof of the main theorem}

This section establishes the Liouville theorem needed to exclude boundary
singularities and then completes the proof of Theorem~\ref{thm:main}. The
argument has three ingredients: analytic Cauchy uniqueness, connectedness of
the regular set, and concentration of the boundary stress under large-scale
rescaling.

\subsection{Analytic Cauchy uniqueness}

\begin{lemma}
\label{lem:analytic}
Let \(v\) be a local minimizer in \(B_1^+\) of an autonomous, analytic,
uniformly elliptic quadratic density. Suppose that \(v\) is smooth up to
\(\Gamma_1\) and that, on a non-empty relatively open disk \(D\subset\Gamma_1\),
\[
v=q,
\quad
\partial_3v=0.
\]
Then
\[
v=q
\]
in an open non-empty subset of \(B_1^+\).
\end{lemma}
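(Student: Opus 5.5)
The plan is to write the Euler--Lagrange system in a chart, prove that \(v\) is real analytic up to \(D\), and then apply Cauchy--Kovalevskaya uniqueness.

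\emph{Step 1: the Euler--Lagrange system in a chart.} Since \(v\) is smooth up to \(\Gamma_1\) and \(v=q\) on \(D\), continuity lets us shrink \(D\) and choose a half-ball \(B_\rho^+(x_0)\), with \(x_0\in D\), on which \(v\) takes values in a small neighbourhood of \(q\). On this neighbourhood fix an analytic chart of \(\mathbb S^2\), for instance the orthogonal projection onto \(T_q\mathbb S^2\), and write \(v=\psi(w)\) with \(w\) valued in \(\mathbb R^2\) and \(w=0\) at \(q\). Local minimality of \(v\) with respect to compactly supported variations in \(B_\rho^+(x_0)\) gives the Euler--Lagrange system
\[
\partial_i\bigl(\widetilde A^{\alpha\beta}_{ik}(w)\partial_kw^\beta\bigr)
=\widetilde B^\alpha_{\beta\gamma,ik}(w)\partial_iw^\beta\partial_kw^\gamma .
\]
Here \(\widetilde A\) and \(\widetilde B\) are real analytic in \(w\), being built from \(A(z)\) and the analytic chart. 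The system is strongly elliptic, since uniform convexity of \(W\) in \(P\) gives the Legendre condition on the tangent space. The Cauchy data on \(D\) are \(w=0\) and \(\partial_3w=0\).

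\emph{Step 2: analyticity up to the flat face.} The system is a nonlinear elliptic system with analytic coefficients, the boundary condition \(w=0\) is an analytic Dirichlet condition on the analytic hypersurface \(\{x_3=0\}\), and \(w\) is smooth up to \(D\). The Morrey--Nirenberg theorem on analyticity up to the boundary for nonlinear elliptic systems satisfying the complementing condition then gives that \(w\) is real analytic in \(B_{\rho'}^+(x_0)\cup D'\) for a smaller disk \(D'\). Dirichlet data satisfy the complementing condition for strongly elliptic second-order systems. I expect this step to be the main obstacle: one has to check that the quoted boundary analyticity theorem applies to systems rather than scalar equations, and that the Lopatinskii--Shapiro condition holds for Legendre-elliptic \(\widetilde A\).

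\emph{Step 3: Cauchy--Kovalevskaya uniqueness.} The surface \(\{x_3=0\}\) is non-characteristic, since \(\widetilde A_{33}^{\alpha\beta}(w)\xi^\alpha\xi^\beta\ge\lambda|\xi|^2\) makes the matrix \(\widetilde A_{33}(w)\) invertible. We can therefore solve the system for \(\partial_{33}w\) as an analytic function of \(w\), \(\nabla w\), and the tangential second derivatives. Now \(w\) is analytic up to \(D'\), and its Cauchy data \((w,\partial_3w)\) vanish on \(D'\). Differentiating the solved system repeatedly and arguing by induction on \(m\), all derivatives \(\partial^\gamma\partial_3^m w\) vanish on \(D'\). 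Hence the Taylor series of \(w\) at a point \(y\in D'\) is zero. Because \(w\) is analytic in a full neighbourhood of \(y\) intersected with \(\overline{\mathbb R^3_+}\), it extends analytically across \(D'\), so \(w\equiv0\) in \(B_\delta(y)\cap\mathbb R^3_+\). Therefore \(v=q\) on this non-empty open subset of \(B_1^+\).

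\emph{Fallback for Step 2.} If boundary analyticity is awkward to quote, I would instead use Holmgren-type or Calderón unique continuation for the linear system satisfied by \(w\) with \(w\) frozen into the coefficients. Extending \(w\) by zero across \(D\) gives a \(C^{1,1}\) solution of the same system in a full ball, since the Cauchy data vanish. Because \(w\) vanishes on an open set there, weak unique continuation for elliptic systems with Lipschitz principal part yields the conclusion. However, such unique continuation fails for general elliptic systems in dimension three, so the analytic route is the primary plan.
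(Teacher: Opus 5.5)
Your proof is correct, but its key step differs from the paper's. You invoke analyticity of \(w\) up to the flat face, via Morrey's boundary theorem for nonlinear systems with a complementing Dirichlet condition. You then show that the Taylor series at a point of \(D'\) vanishes, by solving for \(\partial_{33}w\) and inducting on the number of normal derivatives.

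The paper instead extends the chart map by zero across \(D\). The vanishing of \(y\), \(Dy\), and the co-normal derivative on \(D\) makes this extension a weak solution of the same analytic elliptic system in a full ball. The paper bootstraps it to smoothness and then uses only Morrey's \emph{interior} analyticity theorem, together with the identity theorem and the fact that the extension vanishes on the lower half-ball. This avoids the boundary analyticity theorem and the Lopatinskii--Shapiro verification that you flagged as your main obstacle.

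Your own Step~3 induction already contains the paper's argument. It uses only smoothness of \(w\) up to \(D\), not analyticity, and it shows that every derivative of \(w\) vanishes on \(D\). Hence the zero extension is \(C^\infty\) across \(D\), not merely \(C^{1,1}\) as your fallback says. It also satisfies the nonlinear system classically in a full ball. So the fallback needs no unique continuation for a frozen linear system: interior analyticity of the extended map, plus its vanishing on the lower half-ball, already gives \(w\equiv0\) near a point of \(D\).

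Your primary route spares only the check that the extension is a solution across the interface. It pays for this with a substantially heavier boundary-regularity citation.
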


\begin{proof}
The proof is divided into several steps as follows.

\vspace{1em}
\textit{Step 1.} Shrink \(D\) so that \(v\) takes values in an analytic chart
\[
\Psi:V\subset\mathbb R^2\to\mathbb S^2,
\quad
\Psi(0)=q.
\]
Write
\[
v=\Psi(y).
\]
In these coordinates the energy density has the form
\[
L(y,Dy)
=
\frac{1}{2}
a_{ij}^{\alpha\beta}(y)
\partial_i y^\alpha\partial_j y^\beta,
\]
where \(a_{ij}^{\alpha\beta}\) is analytic and strongly elliptic.

\vspace{1em}
\textit{Step 2.} The Euler--Lagrange system for \(y\) is
\[
\partial_i\!\left(
a_{ij}^{\alpha\beta}(y)\partial_jy^\beta
\right)
-
\frac{1}{2}
\partial_{y^\alpha}a_{ij}^{\beta\gamma}(y)
\partial_i y^\beta\partial_j y^\gamma
=0.
\]
On \(D\), the condition \(v=q\) gives
\[
y=0,
\quad
\partial_1y=\partial_2y=0.
\]
Since \(D\Psi(0)\) is injective, \(\partial_3v=0\) also gives
\[
\partial_3y=0.
\]
Hence
\[
Dy=0
\quad\text{and}\quad
a_{3j}^{\alpha\beta}(0)\partial_jy^\beta=0
\quad\text{on }D.
\]

\vspace{1em}
\textit{Step 3.} Extend \(y\) by zero across \(D\):
\[
\bar y(x)
=
\begin{cases}
y(x),&x_3\geq0,\\
0,&x_3<0
\end{cases}.
\]
The vanishing of \(y\), \(Dy\), and the co-normal derivative on \(D\) implies
that \(\bar y\) is a weak solution of the same elliptic system across the
plane.

\vspace{1em}
\textit{Step 4.} Difference-quotient estimates and elliptic bootstrapping make \(\bar y\) smooth. The analytic regularity theorem of Morrey~\cite{Morrey1958} implies
that \(\bar y\) is real analytic. Since \(\bar y=0\) in a lower half-ball, the
analytic identity theorem gives
\[
\bar y=0
\]
in a smaller full ball. Therefore \(v=q\) on a non-empty open subset of
\(B_1^+\).
\end{proof}

\subsection{Connectedness of the regular set}

\begin{lemma}
\label{lem:connected}
Let \(S\subset\mathbb R^3_+\) be relatively closed. If
\[
\dim_{\mathcal H}S<1,
\]
then
\[
\mathbb R^3_+\backslash S
\]
is path connected.
\end{lemma}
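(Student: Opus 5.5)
Since \(\dim_{\mathcal H}S<1\), we have \(\mathcal H^1(S)=0\). The plan is a standard projection/perturbation argument: join two points of \(\mathbb R^3_+\backslash S\) by a polygonal path inside the open half-space, then perturb it within a family of parallel segments. Lipschitz projections do not increase \(\mathcal H^1\), so all but a null set of the parallel copies miss \(S\).

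Fix \(x,y\in\mathbb R^3_+\backslash S\). Because \(S\) is relatively closed and \(x,y\notin S\), there is \(r>0\) with \(B_r(x)\cup B_r(y)\subset\mathbb R^3_+\backslash S\). First treat the case where the segment \([x,y]\) lies in \(\mathbb R^3_+\); convexity of \(\mathbb R^3_+\) gives this always. Let \(e:=(y-x)/|y-x|\) and let \(\pi\) be the orthogonal projection onto the plane \(e^\perp\). For \(w\in e^\perp\) with \(|w|<r\), consider the translated segment \(\sigma_w:=[x+w,y+w]\). Once \(r\) is smaller than \(\min\{x_3,y_3\}\), the convex hull of the two balls lies in \(\mathbb R^3_+\), so every \(\sigma_w\) lies in \(\mathbb R^3_+\).

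The segment \(\sigma_w\) meets \(S\) only if \(w\in\pi(S)\). Since \(\pi\) is \(1\)-Lipschitz,
\[
\mathcal H^1(\pi(S))\le\mathcal H^1(S)=0,
\]
so \(\pi(S)\) has zero two-dimensional Lebesgue measure in \(e^\perp\). Hence some \(w\) with \(|w|<r\) satisfies \(\sigma_w\cap S=\varnothing\). The path from \(x\) to \(x+w\) inside \(B_r(x)\), then \(\sigma_w\), then from \(y+w\) to \(y\) inside \(B_r(y)\), avoids \(S\). This proves path connectedness. The unboundedness of \(S\) causes no difficulty, since only the bound \(\mathcal H^1(S)=0\) is used, and that holds for the whole set.

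The only step needing care is the measure-zero claim. It rests on the fact that \(\dim_{\mathcal H}S<1\) forces \(\mathcal H^1(S)=0\), hence \(\mathcal H^2(\pi(S))=0\). I do not expect a genuine obstacle here. The relative closedness of \(S\) is used only to obtain the balls around the endpoints.
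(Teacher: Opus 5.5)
Your proof is correct, but it takes a different route from the paper. The paper fixes \(a,b\notin S\) and forms the cones \(C_a,C_b\) with vertices \(a,b\) over \(S\). It uses \(\dim_{\mathcal H}C_a,\dim_{\mathcal H}C_b\le 1+\dim_{\mathcal H}S<2\) to pick a point \(c\in\mathbb R^3_+\) outside both cones, and then takes the two-segment path \([a,c]\cup[c,b]\). That route requires a Hausdorff-dimension estimate for cones, which the paper invokes without proof; in effect this is \(\dim_{\mathcal H}(S\times[0,T])\le\dim_{\mathcal H}S+1\) followed by a locally Lipschitz map.

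You instead sweep the parallel segments \([x+w,y+w]\), \(w\in e^\perp\), \(|w|<r\), and need only that the orthogonal projection onto \(e^\perp\) does not increase Hausdorff measure. This is more elementary. It is also more general: since \(\mathcal H^2(\pi(S))\le\mathcal H^2(S)\), your argument works verbatim under the weaker hypothesis \(\mathcal H^2(S)=0\). (The cone argument, by comparison, needs \(\dim_{\mathcal H}S<2\) together with the cone estimate.) The cost is a three-piece path instead of a two-piece one, which is immaterial.

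Two cosmetic points. First, \(\sigma_w\) meets \(S\) only if \(w\in\pi(S)-\pi(x)=\pi(S-x)\), not \(w\in\pi(S)\); this changes nothing, since translates of null sets are null. Second, the trivial case \(x=y\) should be set aside before you define \(e\).
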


\begin{proof}
Fix \(a,b\in\mathbb R^3_+\backslash S\). Define the cones
\[
C_a
:=
\{a+t(z-a):z\in S,\ t\geq0\},
\quad
C_b
:=
\{b+t(z-b):z\in S,\ t\geq0\}.
\]
Their Hausdorff dimensions satisfy
\[
\dim_{\mathcal H}C_a,\,
\dim_{\mathcal H}C_b
\leq
1+\dim_{\mathcal H}S<2.
\]
Thus \(C_a\cup C_b\) cannot contain a three-dimensional ball. Choose
\[
c\in\mathbb R^3_+\backslash(C_a\cup C_b).
\]
Then neither segment \([a,c]\) nor \([c,b]\) meets \(S\). Hence
\[
[a,c]\cup[c,b]
\subset
\mathbb R^3_+\backslash S
\]
is a path from \(a\) to \(b\).
\end{proof}

\begin{remark}
This is the second place where dimension three is essential. The estimate
\(\dim_{\mathcal H}\operatorname{Sing}v<1\) ensures that the singular set
cannot disconnect the three-dimensional half-space.
\end{remark}

\subsection{Liouville rigidity from large-scale concentration}

\begin{proposition}[Liouville theorem for linearly growing minimizers]
\label{prop:liouville}
Let \(W(z,P)\) be autonomous, analytic, two-homogeneous, uniformly coercive,
and uniformly strictly convex in \(P\). Suppose that
\[
u\in H^1_{\mathrm{loc}}(\mathbb R^3_+;\mathbb S^2)
\]
is a class-A minimizer satisfying
\[
u=q
\quad\text{on }\partial\mathbb R^3_+.
\]
There is no such map satisfying
\begin{equation}
\varepsilon R
\leq
E_W(u;B_R^+)
\leq
MR
\quad\text{for any }R>0
\label{eq:liouville-linear-growth}
\end{equation}
for constants \(\varepsilon>0\) and \(M<+\infty\).
\end{proposition}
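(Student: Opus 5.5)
We argue by contradiction and carry out the blow-down strategy described in the introduction. Suppose \(u\) is a class-A minimizer with trace \(q\) satisfying \eqref{eq:liouville-linear-growth}.

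\emph{Step 1: finite boundary stress and blow-down.} By Lemma~\ref{lem:flux}(2) and the upper bound in \eqref{eq:liouville-linear-growth}, \(\mu_u(\mathbb R^2)\le C(\lambda,\Lambda)M<+\infty\). Take \(R_j\to+\infty\) and set \(u_j(x):=u(R_jx)\). Each \(u_j\) is again a class-A minimizer with trace \(q\). It satisfies the same bounds \(\varepsilon R\le E_W(u_j;B_R^+)\le MR\), since the energy scales by \(R_j^{-1}\). We then apply Proposition~\ref{prop:compactness}, with autonomous coefficients \(A_j=A_0\) and traces \(g_j=q\), on every half-ball \(B_{2R}^+\), and extract a diagonal subsequence. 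This yields \(u_j\to V\) strongly in \(H^1(B_R^+)\) for all \(R\). The limit \(V\) is a class-A minimizer: any compactly supported competitor is spliced to \(u_j\) by Lemma~\ref{lem:luckhaus}, exactly as at the end of Proposition~\ref{prop:tangent}. Strong convergence passes the bounds to the limit, so \(E_W(V;B_1^+)\ge\varepsilon>0\). In particular \(V\) is not constant.

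\emph{Step 2: the limiting stress concentrates at the origin.} Fix \(\phi\in C_c^\infty(\mathbb R^2\backslash\{0\})\), supported in an annulus \(\{r<|x'|<r^{-1}\}\), and an extension \(\zeta\). By strong \(H^1\) convergence and the quadratic bound \(|T|\le C|\nabla u|^2\), Proposition~\ref{prop:compactness} gives \(T[u_j]\to T[V]\) in \(L^1_{\mathrm{loc}}\). Hence \(\int\phi\,d\mu_{u_j}\to\int\phi\,d\mu_V\). By Lemma~\ref{lem:flux}(3),
\[
\Bigl|\int\phi\,d\mu_{u_j}\Bigr|\le\|\phi\|_\infty\,\mu_u\bigl(\{|y'|>rR_j\}\bigr)\to0,
\]
which is a tail of a finite measure. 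Since \(\mu_V\ge0\), we conclude \(\operatorname{spt}\mu_V\subset\{0\}\).

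\emph{Step 3: Cauchy uniqueness and analytic continuation.} Lemma~\ref{lem:package}\ref{Partial regularity}, applied to \(V\) on the boundary disk \(D_2\backslash\overline{D_1}\), gives a nonempty relatively open patch \(D\subset\partial\mathbb R^3_+\backslash\{0\}\) up to which \(V\) is smooth. By Proposition~\ref{prop:stress}, \(\mu_V\llcorner D=W(q,\partial_3V\otimes e_3)\,dx'=0\), so \(\partial_3V=0\) on \(D\). Lemma~\ref{lem:analytic} then gives \(V=q\) on a nonempty open set \(O\subset\mathbb R^3_+\). The interior regular set \(\mathbb R^3_+\backslash\operatorname{Sing}V\) is open. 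It is path connected by Lemma~\ref{lem:connected}, because \(\dim_{\mathcal H}\operatorname{Sing}_{\mathrm{int}}V<1\). On it \(V\) is real analytic by Lemma~\ref{lem:package}\ref{Partial regularity}, with the analyticity of \(W\) in \(z\). The identity theorem on this connected open set gives \(V\equiv q\) off a null set. Hence \(\nabla V=0\) a.e., contradicting \(E_W(V;B_1^+)\ge\varepsilon\).

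\emph{Main obstacle.} The delicate point is the convergence of the boundary stress in Step 2. The measure \(\mu_{u_j}\) is defined only through the bulk pairing \eqref{eq:boundary-stress-definition}, so we need \(L^1_{\mathrm{loc}}\) convergence of \(T[u_j]\) on sets meeting the flat face. This is precisely what the relative Luckhaus lemma provides in Proposition~\ref{prop:compactness}, so I expect it to go through.

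A second check concerns Lemma~\ref{lem:analytic}: it requires \(V\) to be smooth up to \(D\) with analytic autonomous coefficients and constant trace. This is supplied by Proposition~\ref{prop:improvement} at points of vanishing normalized energy, shrinking \(D\) if necessary.
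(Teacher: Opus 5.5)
Your proposal is correct and follows the paper's proof essentially step for step. The shared route is: blow down with the linear bounds preserved and strong compactness up to the flat face; use the finite total mass \(\mu_u(\mathbb R^2)\le C M\) and the scale-invariant law \(\mu_{u_R}(A)=\mu_u(RA)\) to force \(\operatorname{spt}\mu_V\subset\{0\}\); then obtain vanishing Cauchy data on a regular boundary patch away from the origin, apply analytic Cauchy uniqueness, and continue analytically through the connected regular set. (One cosmetic point: \(D_2\backslash\overline{D_1}\) is an annulus, so apply the regular-patch statement to a disk inside it.)
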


\begin{proof}
The proof is divided into several steps as follows.

\vspace{1em}
\textit{Step 1.} \emph{Large-scale compactness.} Choose \(R_j\to+\infty\) and define
\[
u_j(x):=u(R_jx).
\]
In dimension three,
\[
E_W(u_j;B_r^+)
=
R_j^{-1}E_W(u;B_{R_jr}^+).
\]
Thus \eqref{eq:liouville-linear-growth} gives
\[
\varepsilon r
\leq
E_W(u_j;B_r^+)
\leq
Mr
\quad r>0.
\]
Lemma~\ref{lem:package} and a diagonal argument yield a class-A minimizer
\(v\) such that
\[
u_j\to v
\quad\text{strongly in }
H^1_{\mathrm{loc}}(\mathbb R^3_+).
\]
Energy convergence gives
\begin{equation}
\varepsilon r
\leq
E_W(v;B_r^+)
\leq
Mr
\quad r>0.
\label{eq:limit-linear-growth}
\end{equation}
In particular, \(v\) is non-constant.

\vspace{1em}
\textit{Step 2.} \emph{Convergence of the boundary stresses.}
Let
\[
\mu_j:=\mu_{u_j}.
\]
Strong \(H^1_{\mathrm{loc}}\) convergence and the quadratic structure of the
stress imply
\[
T[u_j]\to T[v]
\quad\text{strongly in }L^1_{\mathrm{loc}}.
\]
Therefore, by \eqref{eq:boundary-stress-definition},
\[
\mu_j\rightharpoonup\mu_v
\]
vaguely as Radon measures on \(\mathbb R^2\).

\vspace{1em}
\textit{Step 3.} \emph{Concentration at the origin.}
Lemma~\ref{lem:flux} gives
\[
\mu_u(\mathbb R^2)<+\infty
\]
and, by the three-dimensional scaling law,
\[
\mu_j(A)=\mu_u(R_jA)
\]
for any Borel set \(A\subset\mathbb R^2\).

Let \(K\subset\subset\mathbb R^2\backslash\{0\}\). Then
\[
R_jK
\subset
\left\{
R_j\operatorname{dist}(K,0)
\leq | x'|
\leq
R_j\max_{y'\in K}| y'|
\right\}.
\]
These annuli escape to infinity. Since \(\mu_u\) is finite,
\[
\mu_j(K)=\mu_u(R_jK)\to0.
\]
Thus, for any \(\phi\in C_c(\mathbb R^2\backslash\{0\})\), with
\(K=\operatorname{spt}\phi\),
\[
\left\lvert\int\phi\,\mathrm{d} \mu_j\right\rvert
\leq
\|\phi\|_{L^\infty}\mu_j(K)
\to0.
\]
Passing to the vague limit gives
\begin{equation}
\operatorname{spt}\mu_v\subset\{0\}.
\label{eq:limit-stress-support}
\end{equation}

\vspace{1em}
\textit{Step 4.} \emph{Vanishing Cauchy data on a regular boundary patch.}
By Lemma~\ref{lem:package}, any boundary disk contains a relatively open
regular patch. Choose such a patch
\[
D\subset\subset\partial\mathbb R^3_+\backslash\{0\}.
\]
From \eqref{eq:limit-stress-support},
\[
\mu_v\llcorner D=0.
\]
Since \(v\) is smooth up to \(D\), Proposition~\ref{prop:stress} gives
\[
v=q,
\quad
\partial_3v=0
\quad\text{on }D.
\]
Lemma~\ref{lem:analytic} therefore yields
\[
v=q
\]
on a non-empty open subset of \(\mathbb R^3_+\).

\vspace{1em}
\textit{Step 5.} \emph{Propagation through the regular set.}
Let
\[
\mathcal R(v)
:=
\mathbb R^3_+\backslash\operatorname{Sing}v.
\]
The singular-set estimate
\[
\dim_{\mathcal H}\operatorname{Sing}v<1
\]
and Lemma~\ref{lem:connected} imply that \(\mathcal R(v)\) is path connected.
The map \(v\) is analytic on \(\mathcal R(v)\). Since it equals \(q\) on a
non-empty open subset, analytic continuation gives
\[
v=q
\quad\text{on }\mathcal R(v).
\]

\vspace{1em}
\textit{Step 6.} \emph{Contradiction.}
The singular set has three-dimensional Lebesgue measure zero. Hence
\[
v=q
\quad\text{almost everywhere in }\mathbb R^3_+,
\]
and therefore
\[
\nabla v=0
\quad\text{almost everywhere}.
\]
It follows that
\[
E_W(v;B_r^+)=0
\quad r>0,
\]
contradicting the lower bound
\[
E_W(v;B_r^+)\geq\varepsilon r
\]
in \eqref{eq:limit-linear-growth}.
\end{proof}

\begin{remark}
The limit \(v\) is not asserted to be homogeneous. The proof uses instead
\[
\mu_{u_R}(A)=\mu_u(RA)
\]
and the finiteness of \(\mu_u\). Under rescaling, the boundary stress vanishes
on compact subsets of \(\mathbb R^2\backslash\{0\}\), which replaces the
homogeneous-tangent-map argument of the classical harmonic-map theory.
\end{remark}

\subsection{Exclusion of boundary singularities}

\begin{proof}[Proof of Theorem~\ref{thm:main}]
Suppose, for contradiction, that \(n\) has a singular point
\[
a\in\partial\Omega.
\]
Flattening \(\partial\Omega\) near \(a\) and applying
Proposition~\ref{prop:tangent}, we obtain an autonomous, analytic, uniformly
coercive, and uniformly strictly convex quadratic density \(W_0\), together
with a class-A minimizer
\[
U:\mathbb R^3_+\to\mathbb S^2.
\]
Its trace on the flat boundary is a constant \(q\in\mathbb S^2\), and there
exist constants \(c,C>0\) such that
\[
cR
\leq
E_{W_0}(U;B_R^+)
\leq
CR
\quad\text{for any }R>0.
\]
The lower bound ensures that \(U\) is non-constant, while the upper bound gives
the linear growth required in Proposition~\ref{prop:liouville}. That
proposition rules out the existence of such a half-space class-A minimizer.
This contradiction proves
\[
\operatorname{Sing}n\cap\partial\Omega=\varnothing.
\]

Consequently, any point of \(\partial\Omega\) has a regular
half-neighborhood. Since \(\partial\Omega\) is compact, finitely many of these
neighborhoods cover the boundary. The small-energy regularity and elliptic
bootstrap contained in Proposition~\ref{prop:improvement} imply that \(n\) is
smooth on their union. Shrink the resulting boundary collar once so that its
closure in \(\overline\Omega\) remains inside this finite union. Hence there
exists an open neighborhood \(\mathcal U\) of \(\partial\Omega\) such that
\[
n\in C^\infty(\overline{\Omega\cap\mathcal U};\mathbb S^2).
\]
Thus \(n\) is smooth in a full boundary collar, completing the proof of
Theorem~\ref{thm:main}.
\end{proof}

\subsection{Verification for the Oseen--Frank energy}

We finally verify that the Oseen--Frank functional satisfies all structural
hypotheses used above.

\begin{proposition}[Oseen--Frank structural verification]

Let \(k_1,k_2,k_3>0\) and
\[
\alpha:=\min\{k_1,k_2,k_3\}.
\]
Then the following statements hold.

\begin{enumerate}[label=$(\theenumi)$]
\item The original Oseen--Frank energy and \(E_{W_\alpha}\) have the same
minimizers in any fixed Dirichlet class.

\item The density \(W_\alpha\) is autonomous, analytic, two-homogeneous,
uniformly coercive, and uniformly strictly convex in \(P\).

\item Boundary flattening preserves the uniform structural estimates, and the
rescaled densities converge to an autonomous density of the same type.

\item No smallness or closeness-to-isotropy condition is required on
\((k_1,k_2,k_3)\).
\end{enumerate}
\end{proposition}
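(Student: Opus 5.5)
I will verify the four items in order, since each is essentially algebraic once the right identities are in place.

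For item (1), the plan is to rewrite the Oseen--Frank integrand via the pointwise identity
\[
|\nabla n|^2=(\operatorname{div}n)^2+|\operatorname{curl}n|^2+\bigl[\operatorname{tr}((\nabla n)^2)-(\operatorname{div}n)^2\bigr],
\]
together with the orthogonal splitting \(|\operatorname{curl}n|^2=(n\cdot\operatorname{curl}n)^2+|n\times\operatorname{curl}n|^2\), which is valid because \(|n|=1\). Subtracting \(\alpha\) times the first identity from the Oseen--Frank integrand gives
\[
2W_\alpha(n,\nabla n)+(k_2+k_4-\alpha)\bigl[\operatorname{tr}((\nabla n)^2)-(\operatorname{div}n)^2\bigr].
\]
The bracket is a null Lagrangian: it equals \(\partial_i(n^j\partial_jn^i-n^i\partial_jn^j)\) for smooth \(n\). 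The identity extends by density of smooth maps to \(H^1(\Omega;\mathbb R^3)\), and its integral depends only on the trace. Hence on \(\mathcal A_g\) the two energies differ by a constant, and they have the same minimizers. On a flattened subdomain the same divergence structure persists after change of variables, so the identification also holds for the local comparisons used in Section~2.

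For item (2), autonomy, analyticity in \(z\) (the density is polynomial), and two-homogeneity are immediate from the formula. Coercivity follows because each correction term is nonnegative: \(k_i-\alpha\ge0\) and the corrections are squares, so \(W_\alpha\ge\frac{\alpha}{2}|P|^2\). The upper bound \(W_\alpha\le\Lambda|P|^2\) comes from \(|\operatorname{curl}P|\le C|P|\) and \(|z|=1\). Uniform strict convexity in \(P\) follows because, for fixed \(z\), \(W_\alpha(z,\cdot)\) is a quadratic form that is a sum of \(\frac{\alpha}{2}|P|^2\) and nonnegative quadratic forms. Its Hessian is therefore bounded below by \(\alpha I\), which is Legendre ellipticity, the condition used in Lemma~\ref{lem:package} and Proposition~\ref{prop:improvement}.

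For item (3), I would write the flattened densities as \(W_j(y,z,P)=J_j(y)W_\alpha(z,PB_j(y))\) as in Section~2. Since \(J_j\to1\) and \(B_j\to I\) in \(C^2_{\mathrm{loc}}\) with \(|J_j-1|+|B_j-I|\le Cr_j|y|\), the ellipticity constants change by at most a factor \(1+O(r_j)\) on compact sets. Smoothness of the coefficients is inherited, and the limit is \(W_\alpha(z,P)\) itself, frozen at the boundary point. The limit is therefore autonomous, analytic, and of the same type. The traces \(g(F(r_jy))\) converge in \(C^2_{\mathrm{loc}}\) to the constant \(g(0)\).

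For item (4), it suffices to observe that none of the constants above required any relation among the \(k_i\) beyond positivity: \(\lambda\) depends on \(\alpha>0\), and \(\Lambda\) on \(\max k_i\). The only mildly delicate point is the \(H^1\) validity of the null-Lagrangian identity and the fact that the bracketed term does not affect local minimality on the subregions \(G\). I would handle this by approximating by smooth maps with the same trace, since the term is continuous in \(H^1\) as a quadratic expression and its integral is determined by boundary values.
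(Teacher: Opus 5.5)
Your proposal is correct and follows essentially the same route as the paper. You identify the difference as the null Lagrangian \((k_2+k_4-\alpha)\bigl[\operatorname{tr}((\nabla n)^2)-(\operatorname{div}n)^2\bigr]\), obtain coercivity and \(D^2_{PP}W_\alpha\ge\alpha I\) from the \(\frac{\alpha}{2}|P|^2\) term plus nonnegative squares, and write the flattened densities as \(J_jW_\alpha(z,PB_j)\). You add detail the paper omits: the explicit splitting of \(|\nabla n|^2\), the divergence form of the null Lagrangian, and its \(H^1\)/local-comparison justification. You also use the normalization \(DF(0)=I\) to identify the limit as \(W_\alpha\) itself, whereas the paper states it more generally as \(J_0W_\alpha(z,PB_0)\).
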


\begin{proof}
If \(W_{\mathrm{OF}}\) denotes the original Oseen--Frank density, then
\(W_{\mathrm{OF}}\) and the coercive representative \(W_\alpha\) satisfy
\[
W_{\mathrm{OF}}(n,\nabla n)-W_\alpha(n,\nabla n)
=
\frac{1}{2}(k_2+k_4-\alpha)
\left\{
\operatorname{tr}((\nabla n)^2)-(\operatorname{div}n)^2
\right\}.
\]
The expression in the braces is a null Lagrangian. Its
integral depends only on the Dirichlet trace of \(n\), and is therefore
constant on any fixed Dirichlet class. Consequently,
\[
E_{\mathrm{OF}}(n;\Omega)
=\min_{v\in\mathcal A_g}E_{\mathrm{OF}}(v;\Omega)
\quad\Leftrightarrow\quad
E_{W_\alpha}(n;\Omega)
=\min_{v\in\mathcal A_g}E_{W_\alpha}(v;\Omega).
\]

Recall that
\[
\begin{aligned}
2W_\alpha(z,P)
={}&
\alpha| P|^2
+(k_1-\alpha)(\operatorname{tr}P)^2\\
&+(k_2-\alpha)(z\cdot\operatorname{curl}P)^2
+(k_3-\alpha)| z\times\operatorname{curl}P|^2,
\end{aligned}
\]
where
\[
\alpha=\min\{k_1,k_2,k_3\}>0.
\]
Since \(k_i-\alpha\geq0\), the last three terms are non-negative. Hence
\[
W_\alpha(z,P)\geq\frac{\alpha}{2}| P|^2.
\]
The corresponding upper bound follows from
\[
\lvert\operatorname{tr}P\rvert
+
|\operatorname{curl}P|
\leq C| P|,
\]
and therefore
\[
\frac{\alpha}{2}| P|^2
\leq
W_\alpha(z,P)
\leq
C(k_1,k_2,k_3)| P|^2.
\]
Moreover, the term \(\frac{\alpha}{2}| P|^2\) implies
\[
D_{PP}^2W_\alpha(z,P)\geq\alpha I,
\]
so \(W_\alpha\) is uniformly strictly convex in \(P\). Its explicit formula
also shows that it is autonomous, real analytic in \(z\), and
two-homogeneous in \(P\).

After flattening the boundary and rescaling, the transformed densities take
on the form
\[
W_j(x,z,P)
=
J_j(x)W_\alpha\bigl(z,PB_j(x)\bigr),
\]
where \(J_j\) is the Jacobian factor and \(B_j\) is induced by the inverse
differential of the flattening map. On compact sets,
\[
J_j\to J_0>0,
\quad
B_j\to B_0
\]
smoothly, with \(B_j\) and \(B_j^{-1}\) uniformly bounded. It follows that
the transformed densities retain uniform ellipticity and strict convexity.
Their blow-up limit is
\[
W_0(z,P)
=
J_0W_\alpha(z,PB_0),
\]
which is autonomous, analytic, two-homogeneous, uniformly coercive, and
uniformly strictly convex in \(P\).

All constants in these estimates may depend on
\(k_1,k_2,k_3\), but the argument uses only
\[
\alpha=\min\{k_1,k_2,k_3\}>0.
\]
No assumption that the Frank constants are equal or close to one another is
required.
\end{proof}

\begin{remark}[Where the sign enters]
The proof does not require a sign for
\[
\frac{\mathrm{d}}{\mathrm{d} R}\left(R^{-1}E_W(u;B_R^+)\right).
\]
The required positivity is supplied instead by the one-sided normal variation,
which produces the non-negative boundary stress measure \(\mu_u\).
\end{remark}

\section*{Acknowledgments}

This work is partially supported by the National Key R\&D Program of China
under Grant 2023YFA1008801.

The authors acknowledge the use of AI tools. All mathematical arguments and
proofs in the final manuscript were checked and written by the authors.

\bibliographystyle{plain}
\bibliography{OseenFrank_boundary_regularity}

\end{document}